\theoremstyle{definition}
\newtheorem{theorem}{Theorem}[section]
\newtheorem{lemma}[theorem]{Lemma}
\newtheorem{corollary}[theorem]{Corollary}
\newtheorem{proposition}[theorem]{Proposition}
\newtheorem{definition}[theorem]{Definition}
\newtheorem{remark}[theorem]{Remark}
\newtheorem{example}[theorem]{Example}
\def\C{{\mathbbm C}}
\def\N{{\mathbbm N}}
\def\Z{{\mathbbm Z}}
\def\ssl{{\mathfrak{sl}}}
\def\ie{{\sl i.e.\/}}
\def\cf{{\sl c.f.\/}}
\let\phi=\varphi
\let\theta=\vartheta
\let\epsilon=\varepsilon
\def\ev{\mathop{\rm ev}\nolimits}
\def\coev{\mathop{\rm coev}\nolimits}
\def\dim{\mathop{\rm dim}\nolimits}
\def\id{\mathop{\rm id}\nolimits}
\def\End{\mathop{\rm End}\nolimits}
\def\Hom{\mathop{\rm Hom}\nolimits}
\def\ker{\mathop{\rm ker}\nolimits}
\def\im{\mathop{\rm im}\nolimits}
\def\coim{\mathop{\rm coim}\limits}
\def\lim{\mathop{\rm lim}\limits}
\let\hat=\widehat
\let\tilde=\widetilde
\def\pprime{{\prime\prime}}
\def\ppprime{{\prime\prime\prime}}
\def\hotimes{{\hat\otimes}}
\def\q#1{{[#1]}_q}
\numberwithin{equation}{section}
\newfont{\@aidxte}{cmsy10}
\newfont{\@aidxel}{cmsy10 scaled 1095}
\newfont{\@aidxtw}{cmsy10 scaled 1200}
\newlength\@aidxtexvi
\newlength\@aidxtexvii
\newlength\@aidxelxvi
\newlength\@aidxelxvii
\newlength\@aidxtwxvi
\newlength\@aidxtwxvii
\newcommand{\alignidx}[1]{%
\@aidxtexvi=\fontdimen16\@aidxte
\@aidxtexvii=\fontdimen17\@aidxte
\@aidxelxvi=\fontdimen16\@aidxel
\@aidxelxvii=\fontdimen17\@aidxel
\@aidxtwxvi=\fontdimen16\@aidxtw
\@aidxtwxvii=\fontdimen17\@aidxtw
{\mbox{$%
\fontdimen16\@aidxte=2.9pt
\fontdimen17\@aidxte=2.9pt
\fontdimen16\@aidxel=3.1pt
\fontdimen17\@aidxel=3.1pt
\fontdimen16\@aidxtw=3.3pt
\fontdimen17\@aidxtw=3.3pt
#1$}}%
\fontdimen16\@aidxte=\@aidxtexvi
\fontdimen17\@aidxte=\@aidxtexvii
\fontdimen16\@aidxel=\@aidxelxvi
\fontdimen17\@aidxel=\@aidxelxvii
\fontdimen16\@aidxtw=\@aidxtwxvi
\fontdimen17\@aidxtw=\@aidxtwxvii}
\newenvironment{myenumerate}{%
\begin{enumerate}
\setlength{\partopsep}{0pt}
\setlength{\parskip}{0pt}}{\end{enumerate}}
\newenvironment{myitemize}{%
\begin{itemize}
\setlength{\itemsep}{0pt}
\setlength{\parskip}{0pt}}{\end{itemize}}
\newcommand{\ontop}[2]{\genfrac{}{}{0pt}{2}{\scriptstyle #1}{\scriptstyle
#2}}
\def\nn{\notag}
\def\emph#1{{\sl #1\/}}
\def\sym#1{{\mathcal #1}}
\def\one{\mathbbm{1}}%
\def\bar#1{\overline{#1}}%
\def\op{\mathrm{op}}
\def\Vect{\mathbf{Vect}}
\def\fdVect{\mathbf{fdVect}}
\def\coend{\mathbf{coend}}
\def\msc#1{\noindent{\small Mathematics Subject Classification (2000):
#1\par}}%
\def\keywords#1{\noindent {\small keywords: #1\par}}%
\begin{document}

\title{Fusion categories in terms of graphs and relations}
\author{Hendryk Pfeiffer\thanks{E-mail: \texttt{pfeiffer@math.ubc.ca}}}
\date{\small{Department of Mathematics, The University of British Columbia,\\
1984 Mathematics Road, Vancouver, BC, V2T 1Z2, Canada}\\[1ex]
April 13, 2011}

\maketitle

\begin{abstract}

Every fusion category $\sym{C}$ that is $k$-linear over a suitable
field $k$, is the category of finite-dimensional comodules of a Weak
Hopf Algebra $H$. This Weak Hopf Algebra is finite-dimensional,
cosemisimple and has commutative bases. It arises as the universal
coend with respect to the long canonical functor
$\omega\colon\sym{C}\to\Vect_k$. We show that $H$ is a quotient
$H=H[\sym{G}]/I$ of a Weak Bialgebra $H[\sym{G}]$ which has a
combinatorial description in terms of a finite directed graph
$\sym{G}$ that depends on the choice of a generator $M$ of $\sym{C}$
and on the fusion coefficients of $\sym{C}$. The algebra underlying
$H[\sym{G}]$ is the path algebra of the quiver
$\sym{G}\times\sym{G}$, and so the composability of paths in
$\sym{G}$ parameterizes the truncation of the tensor product of
$\sym{C}$. The ideal $I$ is generated by two types of relations. The
first type enforces that the tensor powers of the generator $M$ have
the appropriate endomorphism algebras, thus providing a Schur--Weyl
dual description of $\sym{C}$. If $\sym{C}$ is braided, this
includes relations of the form `$RTT=TTR$' where $R$ contains the
coefficients of the braiding on $\omega M\otimes\omega M$, a
generalization of the construction of
Faddeev--Reshetikhin--Takhtajan to Weak Bialgebras. The second type
of relations removes a suitable set of group-like elements in order
to make the category of finite-dimensional comodules equivalent to
$\sym{C}$ over all tensor powers of the generator $M$. As examples,
we treat the modular categories associated with
$U_q(\mathfrak{sl}_2)$.
\end{abstract}

\msc{%
16W30,
18D10
}
\keywords{Fusion category, braided monoidal category, Weak Hopf Algebra,
Tannaka--Kre\v\i n reconstruction, quiver}

\section{Introduction}

A fusion category $\sym{C}$ is a semisimple, additive, rigid monoidal
category. $\sym{C}$ is required to have only a finite number of simple objects
up to isomorphism (we call this \emph{finitely semisimple}) and to be
$k$-linear over some field $k$ such that $\Hom(X,Y)$ is finite-dimensional for
all objects $X,Y\in|\sym{C}|$. In the following, we do not impose any
condition on the field $k$, but we require that $\End(X)\cong k$ for all
simple objects $X\in|\sym{C}|$ and say that $\sym{C}$ is \emph{split
semisimple}. For technical reasons, $\sym{C}$ is required to be essentially
small, and for convenience, we equip every object $X\in|\sym{C}|$ with a
specified left-dual. Such a rigid category is called
\emph{left-autonomous}. We do not require the monoidal unit $\one$ to be
simple, \ie\ we include the case of multi-fusion categories. For further
background on fusion categories, we refer
to~\cite{Tu94,BaKi01,EtNi05}.

If a fusion category $\sym{C}$ arises as the category of comodules
$\sym{M}^H\simeq\sym{C}$ of some Hopf algebra $H$, it admits a functor
$F\colon\sym{C}\to\Vect_k$ that is strong monoidal, \ie\ in particular
$F(X\otimes Y)\cong FX\otimes_k FY$ are isomorphic vector spaces for
all $X,Y\in|\sym{C}|$. This happens because for every Hopf algebra
$H$, the forgetful functor $\sym{M}^H\to\Vect_k$ is strong
monoidal. The most interesting fusion categories are those that do
\emph{not} admit any strong monoidal functor to $\Vect_k$ and
therefore do \emph{not} arise from Hopf algebras in this way.

Nevertheless, each fusion category $\sym{C}$ still admits the \emph{long
canonical functor}
\begin{eqnarray}
\omega\colon\sym{C}\to\Vect_k,\quad X &\mapsto&\Hom(\hat V,\hat V\otimes X),\\
f &\mapsto& (\id_{\hat V}\otimes f)\circ-.\nn
\end{eqnarray}
Here, we have used the small progenerator
\begin{equation}
\hat V=\bigoplus_{j\in I} V_j,
\end{equation}
where ${\{V_j\}}_{j\in I}$ is a set of representatives of the
isomorphism classes of the simple objects of $\sym{C}$. The functor
$\omega$ is $k$-linear, faithful, exact, and has a \emph{separable
Frobenius structure}~\cite{Ha99b,Sz05,Pf09a} which includes the
structure of both a lax and an oplax monoidal functor.

Under this functor, the $k$-dimension of a tensor product
\begin{equation}
\dim_k\omega(X\otimes Y)\leq(\dim_k\omega X)\cdot(\dim_k\omega Y),
\end{equation}
is in general smaller than the product of $k$-dimensions. This effect is known
as the \emph{truncation} of the tensor product.

In the present article, we use the long canonical functor in order to
arrive at a characterization of fusion categories in which we can
fully parameterize the truncation of the tensor product in terms of
combinatorial data. This is done as follows.

First, Tannaka--Kre\v\i n reconstruction has been generalized to
functors with a separable Frobenius
structure~\cite{Ha99b,Os03a,Pf09a,Mc09}. It equips the universal coend
$H=\coend(\sym{C},\omega)$ with the structure of a Weak Hopf Algebra
(WHA)~\cite{BoNi99,BoSz00}. This WHA can be shown to be
finite-dimensional and split cosemisimple and to have commutative
bases~\cite{Pf09a}. The long canonical functor plays the role of the
forgetful functor $\sym{M}^H\to\Vect_k$ of the category $\sym{M}^H$ of
finite-dimensional right $H$-comodules, and $\sym{C}\simeq\sym{M}^H$
are equivalent as $k$-linear additive monoidal categories.

We then choose an object $M\in|\sym{C}|$ that generates $\sym{C}$ as a
fusion category, and define a finite directed graph $\sym{G}$, the
\emph{dimension graph of $\sym{C}$ with respect to $M$}. It depends on
the choice of the generator $M$ and on the fusion coefficients of
$\sym{C}$. The reason for considering this graph is the following.

The algebra $R:=\End(\hat V)\cong k^{|I|}$ has a basis of orthogonal
idempotents $\lambda_j=\id_{V_j}$, $j\in I$. The vector spaces $\omega
X=\Hom (\hat V,\hat V\otimes X)$ form $R$-$R$-bimodules. We can choose
a basis of $\omega X$ that consists of basis vectors of the
$\Hom(V_j,V_\ell\otimes X)$ for all $j,\ell\in I$, \ie\ one that is
adapted to the orthogonal idempotents of $R$.

The tensor product in $\sym{C}\simeq\sym{M}^H$ is governed by the
multiplication in $H$ which, in turn, is determined by the lax and
oplax monoidal structure of $\omega$. The following is the lax
monoidal structure:
\begin{gather}
\omega_0\colon
k \to \omega\one,\qquad
1 \mapsto \rho_{\hat V}^{-1},\\
\omega_{X,Y}\colon
\omega X\otimes\omega Y \to \omega(X\otimes Y),\qquad
f\otimes g \mapsto \alpha_{\hat V,X,Y}\circ(f\otimes\id_Y)\circ g.
\end{gather}
It is not difficult to see that $\omega(X\otimes Y)\cong \omega
X\otimes_R\omega Y$ is the tensor product in the category of
$R$-$R$-bimodules. In particular, if $f_1\in\Hom (V_j,V_\ell\otimes M)$ and
$f_2\in\Hom (V_p,V_q\otimes M)$, $j,\ell,p,q\in I$, then
$\omega_{M,M}(f_1\otimes f_2)$ is non-zero if and only if $q=j$.

We thus define the \emph{dimension graph} $\sym{G}$ of $\sym{C}$ with
respect to $M$ to have vertices $\sym{G}^0=I$, \ie\ the orthogonal
idempotents of $R$, and edges $\sym{G}^1_{\ell j}$ from $j$ to $\ell$
the vectors of a basis of $\Hom(V_j,V_\ell\otimes M)$. Then, two edges
$f_1,f_2\in\sym{G}^1$ are composable if and only if the truncated
tensor product $\omega(M\otimes M)$ contains the corresponding vector
$f_1\otimes f_2$ of the $k$-linear tensor product $\omega
M\otimes_k\omega M$.

There is a Weak Bialgebra (WBA) $H[\sym{G}]$ associated with the graph
$\sym{G}$ and a surjection of WBAs $\pi\colon H[\sym{G}]\to H$ such that a
specific simple comodule $k\sym{G}^1$ of $H[\sym{G}]$ is pushed forward under
$\pi$ to the generating comodule $\omega M$ of $\sym{M}^H$. Since $\pi$ is a
homomorphism of WBAs, the same holds for all tensor powers thereof, \ie\ that
${(k\sym{G}^1)}^{\hotimes m}$ is pushed forward to ${(\omega M)}^{\hotimes
m}$, $m\geq 0$.

In order to characterize $H$ and thereby the fusion category
$\sym{C}\simeq\sym{M}^H$, it remains to compute the kernel of $\pi$. This is
done in two steps.

First, we take a suitable quotient $H[\sym{G},\sym{E}]:=H[\sym{G}]/I_\sym{E}$
in order to enforce that each ${(k\sym{G}^1)}^{\hotimes m}$, $m\geq 0$, is
equipped with the same endomorphism algebra as ${(\omega M)}^{\hotimes m}$.

This quotient is particularly easy if the monoidal unit $\one$ and the
chosen generator $M$ of $\sym{C}$ are both simple and if $\sym{C}$ is
braided such that braiding and inverse braiding of adjacent tensor
factors already generate all endomorphisms of ${(\omega M)}^{\hotimes
m}$, $m\geq 2$. In this case, the ideal $I_\sym{E}$ is generated by
quadratic relations of the form `$RTT=TTR$', a generalization of the
construction of Faddeev--Reshetikhin--Takhtajan (FRT)~\cite{ReTa90} to
WBAs. The coefficients of the $R$-matrix in these relations form the
Boltzmann weight of a star-triangular face model.

Second, our surjection of WBAs factors through this first quotient
yielding another surjection of WBAs $\bar\pi\colon
H[\sym{G},\sym{E}]\to H$. We show that $\ker\bar\pi$ is generated by
$1-g$ for a suitable subset of group-like elements $g\in
H[\sym{G},\sym{E}]$. Dividing by $1-g$ ensures that the categories of
comodules of $H$ and $H[\sym{G},\sym{E}]/\ker\pi$ agree everywhere,
not just for fixed tensor powers of the generator $M$.

For the special case of $U_q(\mathfrak{sl}_N)$ in which the first
quotient is given in terms of $RTT$ relations, Hayashi~\cite{Ha99a}
has already presented WHAs whose categories of finite-dimensional
comodules have the same fusion rules as the modular categories
associated with $U_q(\mathfrak{sl}_N)$. In fact, in this special case
of our construction, the first quotient $H[\sym{G}]/I_\sym{E}$ appears
in the literature on subfactors, see, for example~\cite{Oc88}. In
Ocneanu's terminology, the Weak Bialgebra $H[\sym{G}]$ is called a
\emph{paragroup} and the coefficients of the $R$-matrix a
\emph{connection}. The original FRT construction was reformulated by
M{\"u}ller~\cite{Mu01} in a way that can be directly compared with our
approach.

The present article is organized as follows. Section~\ref{sect_prelim}
summarizes some background material on WBAs and WHAs and on the
generalization of Tannaka--Kre\v\i n reconstruction to our case. In
Section~\ref{sect_combinatorial}, we construct the dimension graph
$\sym{G}$ and the surjection of WBAs $\pi\colon H[\sym{G}]\to H$. The
first quotient $H[\sym{G},\sym{E}]=H[\sym{G}]/I_\sym{E}$ is studied in
Section~\ref{sect_schurweyl}. In Section~\ref{sect_monoid}, we study
the group-like elements of the WBA $H[\sym{G},\sym{E}]$
and their associated comodules in order to compute the kernel of
$\bar\pi\colon H[\sym{G},\sym{E}]\to H$. As examples, the modular
categories associated with $U_q(\mathfrak{sl}_2)$
are treated in Section~\ref{sect_example}. The reader who is
interested in a quick overview of our construction, is encouraged to
go directly to that section. Appendix~\ref{app_monoidal} contains a
summary of the definitions and conventions for monoidal categories
that we use.

\section{Preliminaries}
\label{sect_prelim}

In Subsection~\ref{sect_prelimwha}, we summarize some key definitions and
properties of Weak Bialgebras (WBAs) and Weak Hopf Algebras (WHAs)
following~\cite{BoNi99,BoSz00}. Subsection~\ref{sect_prelimcomod} reviews
their categories of comodules following~\cite{Pf09a}. In
Subsection~\ref{sect_prelimtk}, we recall the main results about the
Tannaka--Kre\v\i n reconstruction of a WHA from a given monoidal
category~\cite{Pf09a,Mc09}.

We use the following notation. If $\sym{C}$ is a category, we write
$X\in|\sym{C}|$ for the objects $X$ of $\sym{C}$, $\Hom(X,Y)$ for the
collection of all morphisms $f\colon X\to Y$ and $\End(X)=\Hom(X,X)$. We
denote the identity morphism of $X$ by $\id_X\colon X\to X$ and the
composition of morphisms $f\colon X\to Y$ and $g\colon Y\to Z$ by $g\circ
f\colon X\to Z$. If two objects $X,Y\in|\sym{C}|$ are isomorphic, we write
$X\cong Y$. If two categories $\sym{C}$ and $\sym{D}$ are equivalent, we write
$\sym{C}\simeq\sym{D}$. The identity functor on $\sym{C}$ is denoted by
$1_{\sym{C}}$. The category of vector spaces over a field $k$ is denoted by
$\Vect_k$ and its full subcategory of finite-dimensional vector spaces by
$\fdVect_k$. Both are $k$-linear, abelian and symmetric monoidal. The $n$-fold
tensor power of some object $X\in|\sym{C}|$ of a monoidal category
$(\sym{C},\otimes,\one,\alpha,\lambda,\rho)$ is denoted by $X^{\otimes n}$,
$n\in\N_0$. We set $X^{\otimes 0}:=\one$. We use the notation $\N$
and $\N_0$ for the positive integers and the non-negative integers,
respectively. For our notation and conventions regarding monoidal categories
with duals as well as additive and abelian categories, we refer to
Appendix~\ref{app_monoidal}.

\subsection{Weak Hopf Algebras}
\label{sect_prelimwha}

\begin{definition}
A \emph{Weak Bialgebra} $(H,\mu,\eta,\Delta,\epsilon)$ over a field
$k$ is a $k$-vector space $H$ such that
\begin{myenumerate}
\item
$(H,\mu,\eta)$ forms an associative algebra with multiplication $\mu\colon
H\otimes H\to H$ and unit $\eta\colon k\to H$,
\item
$(H,\Delta,\epsilon)$ forms a coassociative coalgebra with comultiplication
$\Delta\colon H\to H\otimes H$ and counit $\epsilon\colon H\to k$,
\item
the following compatibility conditions hold:
\begin{eqnarray}
\Delta\circ\mu
&=& (\mu\otimes\mu)\circ(\id_H\otimes\sigma_{H,H}\otimes\id_H)\circ(\Delta\otimes\Delta),\\
\epsilon\circ\mu\circ(\mu\otimes\id_H)
&=& (\epsilon\otimes\epsilon)\circ(\mu\otimes\mu)\circ(\id_H\otimes\Delta\otimes\id_H)\nn\\
&=& (\epsilon\otimes\epsilon)\circ(\mu\otimes\mu)\circ(\id_H\otimes\Delta^\op\otimes\id_H),\\
(\Delta\otimes\id_H)\circ\Delta\circ\eta
&=& (\id_H\otimes\mu\otimes\id_H)\circ(\Delta\otimes\Delta)\circ(\eta\otimes\eta)\nn\\
&=& (\id_H\otimes\mu^\op\otimes\id_H)\circ(\Delta\otimes\Delta)\circ(\eta\otimes\eta).
\end{eqnarray}
\end{myenumerate}
Here $\sigma_{V,W}\colon V\otimes W\to W\otimes V$, $v\otimes w\mapsto
w\otimes v$ is the transposition of the tensor factors, and by
$\Delta^\op=\sigma_{H,H}\circ\Delta$ and $\mu^\op=\mu\circ\sigma_{H,H}$, we
denote the \emph{opposite comultiplication} and \emph{opposite
multiplication}, respectively. We tacitly identify the vector spaces
$(V\otimes W)\otimes U\cong V\otimes(W\otimes U)$ and $V\otimes k\cong V\cong
k\otimes V$, exploiting the coherence theorem for the monoidal category
$\Vect_k$.

A \emph{homomorphism} $\phi\colon H\to H^\prime$ of WBAs over the same field
$k$ is a $k$-linear map that is a homomorphism of unital algebras as well as a
homomorphism of counital coalgebras.
\end{definition}

In a WBA $H$, there are two important linear idempotents, the
\emph{source counital map}
\begin{equation}
\epsilon_s:=(\id_H\otimes\epsilon)\circ(\id_H\otimes\mu)\circ(\sigma_{H,H}\otimes\id_H)
\circ(\id_H\otimes\Delta)\circ(\id_H\otimes\eta)\colon H\to H
\end{equation}
and the \emph{target counital map}
\begin{equation}
\epsilon_t:=(\epsilon\otimes\id_H)\circ(\mu\otimes\id_H)\circ(\id_H\otimes\sigma_{H,H})
\circ(\Delta\otimes\id_H)\circ(\eta\otimes\id_H)\colon H\to H.
\end{equation}
Their images $H_s:=\epsilon_s(H)$ and $H_t:=\epsilon_t(H)$ are
mutually commuting unital subalgebras and are called the \emph{source
base algebra} and the \emph{target base algebra}, respectively.

\begin{definition}
A \emph{Weak Hopf Algebra} $(H,\mu,\eta,\Delta,\epsilon,S)$ is a Weak
Bialgebra $(H,\mu,\eta,\Delta,\epsilon)$ with a linear map $S\colon
H\to H$ (\emph{antipode}) that satisfies the following conditions:
\begin{eqnarray}
\mu\circ(\id_H\otimes S)\circ\Delta &=& \epsilon_t,\\
\mu\circ(S\otimes\id_H)\circ\Delta &=& \epsilon_s,\\
\mu\circ(\mu\otimes\id_H)\circ(S\otimes\id_H\otimes S)
\circ(\Delta\otimes\id_H)\circ\Delta&=&S.
\end{eqnarray}
Note that if $f\colon H\to H^\prime$ is a homomorphism of WBAs and both
$H$ and $H^\prime$ are WHAs, then $S^\prime\circ f=f\circ S$.
\end{definition}

For convenience, we write $1=\eta(1)$ and omit parentheses in products,
exploiting associativity. We also use Sweedler's notation and write
$\Delta(x)=x^\prime\otimes x^\pprime$ for the comultiplication of $x\in H$ as
an abbreviation of the expression $\Delta(x)=\sum_k a_k\otimes b_k$ with some
$a_k,b_k\in H$. Similarly, we write
$((\Delta\otimes\id_H)\circ\Delta)(x)=x^\prime\otimes x^\pprime\otimes
x^\ppprime$, exploiting coassociativity.

\begin{definition}
\label{def_coquasi}
A \emph{coquasitriangular} WHA $(H,\mu,\eta,\Delta,\epsilon,S,r)$ over
a field $k$ is a WHA $(H,\mu,\eta,\Delta,\epsilon,S)$ over $k$ with a
linear form $r\colon H\otimes H\to k$ (\emph{universal $r$-form}) that
satisfies the following conditions:
\begin{myenumerate}
\item
For all $x,y\in H$,
\begin{equation}
r(x\otimes y)=\epsilon(x^\prime y^\prime)r(x^\pprime\otimes y^\pprime)
=r(x^\prime\otimes y^\prime)\epsilon(y^\pprime x^\pprime).
\end{equation}
\item
There exists a linear form $\bar r\colon H\otimes H\to k$ such that for all
$x,y\in H$,
\begin{eqnarray}
\bar r(x^\prime\otimes y^\prime)r(x^\pprime\otimes y^\pprime)&=&\epsilon(yx),\\
r(x^\prime\otimes y^\prime)\bar r(x^\pprime\otimes y^\pprime)&=&\epsilon(xy).
\end{eqnarray}
\item
For all $x,y,z\in H$,
\begin{eqnarray}
x^\prime y^\prime r(x^\pprime\otimes y^\pprime)
&=&r(x^\prime\otimes y^\prime)y^\pprime x^\pprime,\\
\label{eq_univr1}
r((xy)\otimes z)&=&r(y\otimes z^\prime) r(x\otimes z^\pprime),\\
\label{eq_univr2}
r(x\otimes (yz))&=&r(x^\prime\otimes y) r(x^\pprime\otimes z).
\end{eqnarray}
\end{myenumerate}
The WHA $H$ is called \emph{cotriangular} if in addition
\begin{equation}
r(x^\prime\otimes y^\prime) r(y^\pprime\otimes x^\pprime) = \epsilon(xy)
\end{equation}
for all $x,y\in H$.
\end{definition}

\subsection{Comodules of Weak Hopf Algebras}
\label{sect_prelimcomod}

We extend Sweedler's notation to the right $H$-comodules and write
$\beta(v)=v_0\otimes v_1$ for the coaction $\beta\colon V\to V\otimes H$ of
$H$ on some vector space $V$.

\begin{proposition}
Let $H$ be a WBA. Then the category $\sym{M}^H$ of finite-dimensional right
$H$-comodules is a monoidal category
$(\sym{M}^H,\hotimes,H_s,\alpha,\lambda,\rho)$. Here the monoidal unit
object is the source base algebra $H_s$ with the coaction
\begin{equation}
\label{eq_betahs}
\beta_{H_s}\colon H_s\to H_s\otimes H,\qquad
x\mapsto x^\prime\otimes x^\pprime.
\end{equation}
The tensor product $V\hotimes W:=\im P_{V,W}$ of two right $H$-comodules
$V,W\in|\sym{M}^H|$ is the \emph{truncated tensor product}, which is the image
of the $k$-linear idempotent
\begin{equation}
\label{eq_idempotent}
P_{V,W}\colon V\otimes W\to V\otimes W,\quad
v\otimes w\mapsto (v_0\otimes w_0)\epsilon(v_1w_1),
\end{equation}
with the coaction given by
\begin{equation}
\beta_{V\hotimes W}\colon V\hotimes W\to (V\hotimes W)\otimes H,\quad
v\otimes w \mapsto (v_0\otimes w_0)\otimes (v_1w_1).
\end{equation}
The unit constraints of the monoidal category are
\begin{alignat}{3}
\lambda_V &\colon H_s\hotimes V\to V,&&\quad x\otimes v\mapsto v_0\epsilon(xv_1),\\
\rho_V    &\colon V\hotimes H_s\to V,&&\quad v\otimes x\mapsto v_0\epsilon(v_1\epsilon_s(x)),
\end{alignat}
and the associator is inherited from that of $\Vect_k$.
\end{proposition}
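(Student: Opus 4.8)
The plan is to verify that the data $(\sym{M}^H,\hotimes,H_s,\alpha,\lambda,\rho)$ satisfy the axioms of a monoidal category, establishing each piece in the order: (i) the truncated tensor product is well-defined on objects and morphisms; (ii) the associator is inherited coherently; (iii) the unit object $H_s$ with coaction $\beta_{H_s}$ is a genuine comodule; (iv) the unit constraints are comodule morphisms and are natural isomorphisms; (v) the pentagon and triangle axioms hold. First I would check that $P_{V,W}$ is indeed an idempotent whose image carries a well-defined coaction. The idempotency $P_{V,W}^2=P_{V,W}$ follows from coassociativity of $\Delta$ together with the compatibility condition $\epsilon\circ\mu\circ(\mu\otimes\id_H)=(\epsilon\otimes\epsilon)\circ(\mu\otimes\mu)\circ(\id_H\otimes\Delta\otimes\id_H)$; that is, the weak counit axiom is exactly what makes the naive projection square to itself. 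One then checks that $\beta_{V\hotimes W}$ as defined lands in $(V\hotimes W)\otimes H$ (not merely in $(V\otimes W)\otimes H$) and satisfies the comodule axioms, again using coassociativity and the compatibility of $\mu$ with $\Delta$.

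Next I would treat the unit object. The map $\beta_{H_s}(x)=x'\otimes x''$ is a coaction precisely because $H_s=\epsilon_s(H)$ is characterized by the property that $\Delta$ restricted to $H_s$ takes values in $H_s\otimes H$; this is a standard consequence of the definition of $\epsilon_s$ and is the reason one uses the source base algebra rather than $k$ as the monoidal unit. For the unit constraints I would verify directly that $\lambda_V$ and $\rho_V$ are $H$-comodule morphisms and that they are invertible, exhibiting explicit inverses built from $\beta$ and $\epsilon_s$. The naturality of $\lambda,\rho,\alpha$ in $V$ and $W$ is routine once the maps are shown to be comodule morphisms, since all structure maps are expressed through $\beta$, $\mu$, and $\epsilon$.

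The genuinely delicate point — and where I expect the main obstacle to lie — is the associativity coherence. Because $V\hotimes W$ is a subspace of $V\otimes W$ cut out by the idempotent $P_{V,W}$, one must check that the associator of $\Vect_k$ restricts to an isomorphism $(V\hotimes W)\hotimes U\cong V\hotimes(W\hotimes U)$; this requires showing that the two nested idempotents, $P_{V\hotimes W,U}\circ(P_{V,W}\otimes\id_U)$ and the symmetric expression on the other side, have the same image inside $V\otimes W\otimes U$. I would establish this by a Sweedler-notation computation reducing both composite idempotents to the single symmetric projector $v\otimes w\otimes u\mapsto (v_0\otimes w_0\otimes u_0)\epsilon(v_1w_1u_1)$, using coassociativity to regroup $\Delta$ and the weak-counit identity to collapse the iterated $\epsilon$-factors; the three-fold compatibility condition involving $\Delta^{\op}$ and $\mu^{\op}$ guarantees that the regrouping is consistent regardless of the order of multiplication. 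Once both composites are shown to equal this common projector, the associator of $\Vect_k$ manifestly descends to the subspaces, and the pentagon axiom is then inherited verbatim from $\Vect_k$. The triangle axiom similarly reduces to the $\Vect_k$ coherence after the unit constraints have been identified explicitly.
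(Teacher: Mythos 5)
There is no in-paper argument to compare against: the paper states this proposition without proof, as background reviewed from~\cite{Pf09a} (ultimately going back to B\"ohm--Nill--Szlach\'anyi). Judged on its own merits, your outline is the standard direct verification and is essentially correct. In particular, you correctly isolate the one genuinely delicate point: that both nested idempotents $P_{V\hotimes W,U}\circ(P_{V,W}\otimes\id_U)$ and $P_{V,W\hotimes U}\circ(\id_V\otimes P_{W,U})$ coincide with the symmetric projector $v\otimes w\otimes u\mapsto(v_0\otimes w_0\otimes u_0)\,\epsilon(v_1w_1u_1)$, so that the associator of $\Vect_k$ descends to the truncated tensor products and the pentagon is inherited verbatim.

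Two attributions should be repaired before this becomes a proof, though neither threatens the strategy. First, the idempotency $P_{V,W}^2=P_{V,W}$ and the collapse of the iterated $\epsilon$-factors do not follow from the weak-counit axiom you cite: after applying comodule coassociativity one faces $\epsilon(v_1^\prime w_1^\prime)\epsilon(v_1^\pprime w_1^\pprime)$, whose Sweedler legs sit on two different letters and so never match the pattern $\epsilon(xy^\prime)\epsilon(y^\pprime z)$. What collapses it is multiplicativity of $\Delta$ (the first WBA compatibility axiom), giving $v_1^\prime w_1^\prime\otimes v_1^\pprime w_1^\pprime=\Delta(v_1w_1)$, followed by ordinary counity; the same mechanism, not the three-fold weakness condition, reduces both nested projectors to the symmetric one. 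The weakness axioms are what make $P_{V,W}$ differ from the identity, and they enter instead at the places you wave through as ``standard'': the unit weakness axiom yields $\Delta(x)=1^\prime\otimes x1^\pprime$ for $x\in H_s$, hence $\Delta(H_s)\subseteq H_s\otimes H$ and the comodule structure on $H_s$, and both weakness axioms are needed to check that $\lambda_V$ and $\rho_V$ are colinear isomorphisms (e.g.\ via identities such as $\epsilon(\epsilon_s(y)z)=\epsilon(yz)$, which is exactly counit weakness specialized at the unit). Second, the triangle axiom does not ``reduce to $\Vect_k$ coherence'': unlike the associator, $\lambda$ and $\rho$ are not restrictions of the vector-space unit constraints ($H_s\neq k$ in general), so the triangle identity requires its own Sweedler computation with $\epsilon_s$. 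Both repairs are routine, so these are bookkeeping errors in which axiom does which job, not gaps in the plan.
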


The forgetful functor of the category of finite-dimensional comodules of a WBA
is not necessarily strong monoidal as in the case of a bialgebra, but it
satisfies the following more general conditions of a functor with separable
Frobenius structure~\cite{Sz05}.

\begin{definition}
Let $\sym{C}$ and $\sym{C}^\prime$ be monoidal categories. A \emph{functor
with Frobenius structure}
$(F,F_{X,Y},F_0,F^{X,Y},F^0)\colon\sym{C}\to\sym{C}^\prime$ is a functor
$F\colon\sym{C}\to\sym{C}^\prime$ that is lax monoidal as $(F,F_{X,Y},F_0)$
and oplax monoidal as $(F,F^{X,Y},F^0)$ and that satisfies the following
compatibility conditions,
\begin{equation}
\begin{aligned}
\xymatrix{
F(X\otimes Y)\otimes^\prime FZ\ar[rr]^{F_{X\otimes Y,Z}}\ar[dd]_{F^{X,Y}\otimes^\prime\id_{FZ}}&&
F((X\otimes Y)\otimes Z)\ar[rr]^{F\alpha_{X,Y,Z}}&&
F(X\otimes(Y\otimes Z))\ar[dd]^{F^{X,Y\otimes Z}}\\
\\
(FX\otimes^\prime FY)\otimes^\prime FZ\ar[rr]_{\alpha^\prime_{FX,FY,FZ}}&&
FX\otimes^\prime(FY\otimes^\prime FZ)\ar[rr]_{\id_{FX}\otimes^\prime F_{Y,Z}}&&
FX\otimes^\prime F(Y\otimes Z),
}
\end{aligned}
\end{equation}
and
\begin{equation}
\begin{aligned}
\xymatrix{
FX\otimes^\prime F(Y\otimes Z)\ar[rr]^{F_{X,Y\otimes Z}}\ar[dd]_{\id_{FX}\otimes^\prime F^{Y,Z}}&&
F(X\otimes (Y\otimes Z))\ar[rr]^{F\alpha^{-1}_{X,Y,Z}}&&
F((X\otimes Y)\otimes Z)\ar[dd]^{F^{X\otimes Y,Z}}\\
\\
FX\otimes^\prime(FY\otimes^\prime FZ)\ar[rr]_{{\alpha^\prime}^{-1}_{FX,FY,FZ}}&&
(FX\otimes^\prime FY)\otimes^\prime FZ\ar[rr]_{F_{X,Y}\otimes^\prime\id_{FZ}}&&
F(X\otimes Y)\otimes^\prime FZ,
}
\end{aligned}
\end{equation}
for all $X,Y,Z\in|\sym{C}|$. It is called a \emph{functor with separable
Frobenius structure} if in addition
\begin{equation}
F_{X,Y}\circ F^{X,Y} = \id_{F(X\otimes Y)},
\end{equation}
for all $X,Y\in|\sym{C}|$.
\end{definition}

This terminology was chosen because if $\sym{C}^\prime=\Vect_k$, the vector
space $F\one$ forms a Frobenius algebra if $F$ has a Frobenius structure and
an index-one Frobenius algebra if $F$ has a separable Frobenius structure,
respectively. Frobenius algebras over a field are separable if and only if
their Frobenius structure can be chosen to be of index one~\cite{KaSz03}.

\begin{proposition}
\label{prop_forgetful}
Let $(H,\mu,\eta,\Delta,\epsilon)$ be a WBA and
$U\colon\sym{M}^H\to\Vect_k$ be the obvious forgetful functor. Then
$(U,U_{X,Y},U_0,U^{X,Y},U^0)$ is a $k$-linear faithful functor with a
separable Frobenius structure, and it takes values in $\fdVect_k$. The
Frobenius structure is given by
\begin{eqnarray}
U_{X,Y}=\coim P_{X,Y}  \colon UX\otimes UY         &\to& P_{X,Y}(UX\otimes UY),\\
U_0    =\eta           \colon k                    &\to& H_s,\\
U^{X,Y}=\im P_{X,Y}    \colon P_{X,Y}(UX\otimes UY)&\to& UX\otimes UY,\\
U^0    =\epsilon|_{H_s}\colon H_s                  &\to& k.
\end{eqnarray}
Here $P_{X,Y}$ denotes the idempotent of~\eqref{eq_idempotent} with its image
factorization $P_{X,Y}=\im P_{X,Y}\circ\coim P_{X,Y}$. Its image
$P_{X,Y}(UX\otimes UY)=U(X\hotimes Y)$ is the vector space underlying the
truncated tensor product. Finally, $H_s=U\one$ is the vector space underlying
the monoidal unit.
\end{proposition}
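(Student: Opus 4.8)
The plan is to dispatch the elementary claims first and then concentrate on the Frobenius axioms. That $U$ is $k$-linear and faithful is immediate, since a morphism of comodules is by definition a $k$-linear map and $U$ forgets only the coaction; that $U$ lands in $\fdVect_k$ holds because the objects of $\sym{M}^H$ are finite-dimensional. The separability identity $U_{X,Y}\circ U^{X,Y}=\id_{U(X\hotimes Y)}$ is also formal: writing the idempotent of~\eqref{eq_idempotent} as its image factorization $P_{X,Y}=\im P_{X,Y}\circ\coim P_{X,Y}$, the relation $P_{X,Y}^2=P_{X,Y}$ together with the fact that $\im P_{X,Y}$ is monic and $\coim P_{X,Y}$ epic forces $\coim P_{X,Y}\circ\im P_{X,Y}=\id$. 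Thus $U^{X,Y}$ is the inclusion of the retract $U(X\hotimes Y)=\im P_{X,Y}\subseteq UX\otimes UY$ and $U_{X,Y}$ is the complementary projection, with $P_{X,Y}=U^{X,Y}\circ U_{X,Y}$.

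The heart of the proof is a single computational lemma about the idempotents on a triple tensor product. I would introduce the idempotent
\[
P^{(3)}_{X,Y,Z}\colon UX\otimes UY\otimes UZ\to UX\otimes UY\otimes UZ,\qquad x\otimes y\otimes z\mapsto (x_0\otimes y_0\otimes z_0)\,\epsilon(x_1 y_1 z_1),
\]
and claim that
\[
(\id_{UX}\otimes P_{Y,Z})\circ(P_{X,Y}\otimes\id_{UZ})=(P_{X,Y}\otimes\id_{UZ})\circ(\id_{UX}\otimes P_{Y,Z})=P^{(3)}_{X,Y,Z}.
\]
Both equalities are direct Sweedler computations. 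Writing $y\mapsto y_0\otimes y_1\otimes y_2$ for the iterated coaction on $Y$ (well defined by coassociativity), expansion of the two composites produces the weights $\epsilon(x_1 y_1)\epsilon(y_2 z_1)$ and $\epsilon(x_1 y_2)\epsilon(y_1 z_1)$. These are precisely the two expressions for $\epsilon(x_1 y_1 z_1)$ furnished by the two forms of the WBA counit axiom (the $\Delta$ and the $\Delta^\op$ version), so both composites equal $P^{(3)}_{X,Y,Z}$. In particular the two idempotents commute, and $U((X\hotimes Y)\hotimes Z)=\im P^{(3)}_{X,Y,Z}=U(X\hotimes(Y\hotimes Z))$ as subspaces of $UX\otimes UY\otimes UZ$.

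With this lemma in hand the remaining diagrams follow. Naturality of $U_{X,Y}$ and $U^{X,Y}$ reduces to the intertwining $P_{X^\prime,Y^\prime}\circ(f\otimes g)=(f\otimes g)\circ P_{X,Y}$ for comodule maps $f,g$, which is immediate from colinearity $\beta\circ f=(f\otimes\id_H)\circ\beta$. The lax associativity coherence diagram and its oplax dual become, after cancelling the associator of $\sym{M}^H$ (inherited from $\Vect_k$), identities between maps built from $P_{X,Y}$, $P_{Y,Z}$ and $P^{(3)}_{X,Y,Z}$; each side is the evident projection/inclusion through $\im P^{(3)}_{X,Y,Z}$, so the lemma closes them. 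The two Frobenius compatibility squares are handled the same way: each composite is evaluated on $U(X\hotimes Y)\otimes UZ$ (resp. $UX\otimes U(Y\hotimes Z)$), where by the coaction formula $\beta_{X\hotimes Y}(v\otimes w)=(v_0\otimes w_0)\otimes v_1 w_1$ the idempotent $P_{X\hotimes Y,Z}$ restricts to $P^{(3)}_{X,Y,Z}$ on $\im P_{X,Y}\otimes UZ$, and both sides agree because they factor through the common image $\im P^{(3)}_{X,Y,Z}$.

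What remains are the unit coherences for the lax and oplax structures, which I expect to be the fiddliest but still routine part. Here $U_0=\eta\colon k\to H_s$ and $U^0=\epsilon|_{H_s}\colon H_s\to k$, while the constraints $\lambda_V,\rho_V$ of the preceding proposition are given explicitly. Verifying the left and right unit triangles for both structures amounts to manipulating the expressions $v_0\epsilon(xv_1)$ and $v_0\epsilon(v_1\epsilon_s(x))$ by means of the defining identities of the counital maps $\epsilon_s,\epsilon_t$ and of the base algebra $H_s=\epsilon_s(H)$. I expect the main obstacle throughout to be organizational rather than conceptual: keeping the bookkeeping of the several idempotents and of the $\epsilon_s,\epsilon_t$ relations straight, since no ingredient beyond coassociativity of the coaction and the two counit axioms of a WBA actually enters.
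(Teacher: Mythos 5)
Your proposal is correct, but note that the paper itself does not prove this proposition: it is background material in Section~\ref{sect_prelim}, recalled from Szlach{\'a}nyi~\cite{Sz05} and~\cite{Pf09a}, so there is no internal argument to compare against, and your write-up stands as a legitimate direct verification. Its crux is exactly right: the identity $(\id_{UX}\otimes P_{Y,Z})\circ(P_{X,Y}\otimes\id_{UZ})=(P_{X,Y}\otimes\id_{UZ})\circ(\id_{UX}\otimes P_{Y,Z})=P^{(3)}_{X,Y,Z}$, where the two orders of composition produce the weights $\epsilon(x_1y_1)\epsilon(y_2z_1)$ and $\epsilon(x_1y_2)\epsilon(y_1z_1)$, which the $\Delta$- and $\Delta^\op$-forms of the weak counit axiom both identify with $\epsilon(x_1y_1z_1)$; this is precisely where weakness of the bialgebra enters, and your observation that $P_{X\hotimes Y,Z}$ restricts to $P^{(3)}_{X,Y,Z}$ on $\im P_{X,Y}\otimes UZ$ (because $\beta_{X\hotimes Y}(v\otimes w)=(v_0\otimes w_0)\otimes v_1w_1$) correctly reduces both hexagons and both Frobenius squares to factorizations through the common image $\im P^{(3)}_{X,Y,Z}$. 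Your derivation of separability from $P^2_{X,Y}=P_{X,Y}$ with $\im P_{X,Y}$ monic and $\coim P_{X,Y}$ epic, and of naturality from colinearity, are likewise sound. The only point where you defer rather than prove is the four unit triangles, and they do close as you expect: for instance, in the lax left triangle one ends up with the weight $\epsilon(1^\prime h^\prime)\epsilon(1^\pprime h^\pprime)$ where $h$ is the coaction leg, and since $\Delta(h)=\Delta(1)\Delta(h)=1^\prime h^\prime\otimes 1^\pprime h^\pprime$ by multiplicativity of $\Delta$, counitality gives $\epsilon(h)$; the right-unit triangles use in addition that $\epsilon_s(x)=x$ for $x\in H_s$, so that $\rho_V(v\otimes x)=v_0\epsilon(v_1x)$ there. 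No ingredient beyond the ones you name is needed.
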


\begin{proposition}
\label{prop_autonomous}
Let $H$ be a WHA. Then $\sym{M}^H$ is left-autonomous if the left-dual of
every object $V\in|\sym{M}^H|$ is chosen to be $(V^\ast,\ev_V,\coev_V)$, where
the dual vector space $V^\ast$ is equipped with the coaction
\begin{equation}
\beta_{V^\ast}\colon V^\ast\to V^\ast\otimes H,\qquad
\theta\mapsto (v\mapsto \theta(v_0)\otimes S(v_1)),
\end{equation}
and the evaluation and coevaluation maps are given by
\begin{alignat}{3}
\ev_V   &\colon V^\ast\hotimes V\to H_s,&&\quad \theta\otimes v\to\theta(v_0)\epsilon_s(v_1),\\
\coev_V &\colon H_s\to V\hotimes V^\ast,&&\quad x\mapsto\sum_j ({(v_j)}_0\otimes \theta^j)\epsilon(x{(v_j)}_1).
\end{alignat}
Here we have used the evaluation and coevaluation maps that turn
$V^\ast$ into a left-dual of $V$ in the category $\fdVect_k$:
\begin{alignat}{3}
\ev_V^{(\fdVect_k)}   &\colon V^\ast\otimes V\to k,&&\quad \theta\otimes v\mapsto \theta(v),\\
\coev_V^{(\fdVect_k)} &\colon k\to V\otimes V^\ast,&&\quad 1\mapsto \sum_j v_j\otimes \theta^j.
\end{alignat}
\end{proposition}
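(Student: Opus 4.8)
The plan is to verify directly the three conditions that make $(V^\ast,\ev_V,\coev_V)$ a left-dual of $V$ in $\sym{M}^H$: first, that $V^\ast$ equipped with $\beta_{V^\ast}$ is a well-defined object of $\sym{M}^H$, i.e.\ a finite-dimensional right $H$-comodule; second, that $\ev_V$ and $\coev_V$ are morphisms of comodules and are well-defined on the truncated tensor products; and third, that the two triangle identities hold with respect to the unit constraints $\lambda_V,\rho_V$ of the monoidal structure described above. Throughout I would fix a basis $\{v_j\}$ of $V$ with dual basis $\{\theta^j\}$ and write the coaction as $\beta_V(v_j)=\sum_i v_i\otimes H_{ij}$ with comatrix elements $H_{ij}\in H$, so that $\Delta(H_{ij})=\sum_k H_{ik}\otimes H_{kj}$ and, by the counit axiom, $\epsilon(H_{ij})=\delta_{ij}$. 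In this basis the defining formula becomes $\beta_{V^\ast}(\theta^i)=\sum_j\theta^j\otimes S(H_{ij})$, which makes the computations concrete and lets me reduce everything to the ordinary duality of $\fdVect_k$ decorated by the antipode and counital maps.

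For the first step I would use the two standard structural properties of the antipode of a WHA, namely that $S$ is an anti-homomorphism of coalgebras, $\Delta\circ S=(S\otimes S)\circ\Delta^\op$, and that $\epsilon\circ S=\epsilon$. Coassociativity of $\beta_{V^\ast}$ then holds because applying $\Delta$ to $S(H_{ij})$ reverses the order of the two comatrix factors, which exactly compensates for the order in which $\beta_{V^\ast}$ is applied to the $V^\ast$-leg; concretely, both $(\id_{V^\ast}\otimes\Delta)\circ\beta_{V^\ast}$ and $(\beta_{V^\ast}\otimes\id_H)\circ\beta_{V^\ast}$ send $\theta^i$ to $\sum_{j,k}\theta^j\otimes S(H_{kj})\otimes S(H_{ik})$. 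The counit axiom $(\id_{V^\ast}\otimes\epsilon)\circ\beta_{V^\ast}=\id_{V^\ast}$ is then immediate from $\epsilon\circ S=\epsilon$ and $\epsilon(H_{ij})=\delta_{ij}$.

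Next I would check colinearity. The main point is that $\ev_V$ and $\coev_V$ must be shown to be defined on the truncated tensor products $V^\ast\hotimes V$ and $V\hotimes V^\ast$, i.e.\ to be compatible with the idempotents $P_{V^\ast,V}$ and $P_{V,V^\ast}$ of~\eqref{eq_idempotent}, and then to intertwine the coactions with $\beta_{H_s}$ of~\eqref{eq_betahs}. For $\ev_V$ this reduces to the antipode axiom $\mu\circ(S\otimes\id_H)\circ\Delta=\epsilon_s$ together with the defining properties of the source counital map $\epsilon_s$ and the fact that $H_s$ is its image; for $\coev_V$ one uses the companion axiom $\mu\circ(\id_H\otimes S)\circ\Delta=\epsilon_t$ and the interaction of $\epsilon_t$ with the comultiplication. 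Here the computation is genuinely ``weak'': the naive evaluation and coevaluation of $\fdVect_k$ would land in $k$, whereas the WHA versions are corrected by $\epsilon_s$ and by the factor $\epsilon(x\,{(v_j)}_1)$ so as to take values in, and be defined on, the base algebra $H_s$.

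Finally I would verify the two triangle identities. After transporting them along the unit constraints $\lambda_V,\rho_V$ and the image factorizations $P=\im P\circ\coim P$, each reduces to the corresponding snake equation in $\fdVect_k$ for the ordinary pairing $\theta\otimes v\mapsto\theta(v)$, decorated by the counital maps. The step requiring care uses the antipode in the form $\mu\circ(\id_H\otimes S)\circ\Delta=\epsilon_t$ (respectively $\mu\circ(S\otimes\id_H)\circ\Delta=\epsilon_s$) to collapse a product of a comatrix element with an antipode-image comatrix element back to an element of $H_s$, so that the surviving contraction is exactly the classical zig-zag. I expect this last point to be the main obstacle: unlike the ordinary Hopf-algebra case, where $\ev$ and $\coev$ take values in $k$ and the triangle identities are immediate, one must here keep careful track of the truncation idempotents, the non-trivial monoidal unit $H_s$, and the source and target counital maps, and show that their combined effect is precisely $\id_V$ rather than merely a projection onto a subspace.
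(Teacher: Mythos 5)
Your proposal is correct and is essentially the canonical argument: the paper itself states Proposition~\ref{prop_autonomous} without proof, as review material imported from~\cite{Pf09a}, and the proof there is precisely the direct verification you outline --- check the comodule axioms for $\beta_{V^\ast}$ via $\Delta\circ S=(S\otimes S)\circ\Delta^\op$ and $\epsilon\circ S=\epsilon$, check that $\ev_V$ and $\coev_V$ are compatible with the truncation idempotents $P_{V^\ast,V}$, $P_{V,V^\ast}$ and intertwine the coactions, and collapse the triangle identities using the antipode axioms $\mu\circ(S\otimes\id_H)\circ\Delta=\epsilon_s$ and $\mu\circ(\id_H\otimes S)\circ\Delta=\epsilon_t$ applied to comatrix elements. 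The computations you do carry out are correct (in particular $\Delta(H_{kj})=\sum_i H_{ki}\otimes H_{ij}$, $\epsilon(H_{ij})=\delta_{ij}$, and the coassociativity check for $\beta_{V^\ast}$), and the identities you name for the deferred steps --- notably $\sum_k S(H_{ik})H_{kj}=\epsilon_s(H_{ij})$, which gives colinearity of $\ev_V$, and its companion $\sum_k H_{ik}S(H_{kj})=\epsilon_t(H_{ij})$ for the zig-zag with $\coev_V$ --- are exactly the ones that make those steps go through.
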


Let $V\in\sym{M}^H$ be a finite-dimensional right comodule of a WBA $H$ with
some basis ${\{e_j\}}_j$. Then there are unique elements $c^{(V)}_{\ell j}\in
H$ such that $\beta_V(e_j)=\sum_\ell e_\ell\otimes c^{(V)}_{\ell j}$ for all
$j$. These $c^{(V)}_{\ell j}$ are called the \emph{coefficients} of $V$ and
their linear span the \emph{coefficient coalgebra} $C(V)$. $C(V)$ is a
subcoalgebra of $H$. If $H$ is a WHA, we call the element
$t_V=\sum_jc^{(V)}_{jj}\in H$ the \emph{dual character of} $V$.

\subsection{Tannaka--Kre\v\i n reconstruction}
\label{sect_prelimtk}

\begin{definition}
\label{def_longforget}
Let $\sym{C}$ be an essentially small, finitely split semisimple, $k$-linear,
additive monoidal category such that $k$ is a field and $\Hom(X,Y)$ is
finite-dimensional over $k$ for all $X,Y\in\sym{C}$. By ${\{V_j\}}_{j\in I}$
where $I$ is a finite index set, we denote a set of representatives of the
isomorphism classes of simple objects of $\sym{C}$. Then the \emph{long
canonical functor} is defined as
\begin{eqnarray}
\omega\colon\sym{C}\to\Vect_k,\quad X &\mapsto&\Hom(\hat V,\hat V\otimes X),\\
f &\mapsto& (\id_{\hat V}\otimes f)\circ-,\nn
\end{eqnarray}
where $\hat V$ denotes the object
\begin{equation}
\label{eq_hatv}
\hat V=\bigoplus_{j\in I} V_j.
\end{equation}
\end{definition}

\begin{remark}
\label{rem_endvhat}
The algebra $R:=\End(\hat V)\cong\omega\one\cong k^{|I|}$ has a basis
${(\lambda_j)}_{j\in I}$ of orthogonal idempotents given by
$\lambda_j=\id_{V_j}\in R$. It forms a Frobenius algebra
$(R,\circ,\id_R,\Delta_R,\epsilon_R)$ with comultiplication $\Delta_R\colon
R\to R\otimes R$ and counit $\epsilon_R\colon R\to k$ given by
$\Delta(\lambda_j)=\lambda_j\otimes\lambda_j$ and $\epsilon(\lambda_j)=1$ for
all $j\in I$. The element $\Delta(\id_R)$ is a separability idempotent. Such a
Frobenius algebra is called \emph{index one} or \emph{Frobenius
separable}~\cite{KaSz03}.
\end{remark}

\begin{proposition}
Let $\sym{C}$ be as in Definition~\ref{def_longforget}. Then the long
canonical functor $\omega\colon\sym{C}\to\Vect_k$ is a $k$-linear
faithful functor with a separable Frobenius structure
$(\omega,\omega_{XY},\omega_0,\omega^{XY},\omega^0)$ and takes values
in $\fdVect_k$. The separable Frobenius structure is given by
\begin{eqnarray}
\omega_0\colon
k &\to& \omega\one,\\
1 &\mapsto& \rho_{\hat V}^{-1},\nn\\
\omega_{X,Y}\colon
\omega X\otimes\omega Y &\to& \omega(X\otimes Y),\\
f\otimes g &\mapsto& \alpha_{\hat V,X,Y}\circ(f\otimes\id_Y)\circ g,\nn\\
\omega^0\colon
\omega\one &\to& k,\\
v &\mapsto& \epsilon_R(\rho_{\hat V}\circ v),\nn\\
\omega^{X,Y}\colon
\omega(X\otimes Y) &\to& \omega X\otimes\omega Y,\\
h &\mapsto& \sum_{j,\ell}
\epsilon_R\Bigl(e^\ell_{(Y)}\circ(e^j_{(X)}\otimes\id_Y)
\circ\alpha^{-1}_{\hat V,X,Y}\circ h\Bigr)e_j^{(X)}\otimes e_\ell^{(Y)}.\nn
\end{eqnarray}
Here ${(e_j^{(X)})}_j$ and ${(e^j_{(X)})}_j$ denote a pair of dual bases of
$\omega X=\Hom(\hat V,\hat V\otimes X)$ and $\Hom(\hat V\otimes X,\hat V)$,
respectively, with respect to the non-degenerate bilinear form
\begin{equation}
\label{eq_gx}
g_X\colon\Hom(\hat V\otimes X,\hat V)\otimes\Hom(\hat V,\hat V\otimes X)\to k,\qquad
\theta\otimes v\mapsto\epsilon_R(\theta\circ v).
\end{equation}
\end{proposition}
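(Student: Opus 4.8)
The plan is to establish the elementary functorial properties first and then verify that the four structure maps satisfy the lax, oplax, Frobenius and separability axioms by direct computation, the decisive input being the index-one (Frobenius separable) structure of $R=\End(\hat V)$ recorded in Remark~\ref{rem_endvhat}. I would begin with the non-monoidal claims. The $k$-linearity of $\omega$ on morphisms is immediate from the definition $f\mapsto(\id_{\hat V}\otimes f)\circ-$ in Definition~\ref{def_longforget}, since $\otimes$ and $\circ$ are $k$-bilinear in a $k$-linear category; that $\omega$ takes values in $\fdVect_k$ is just the hypothesis that all $\Hom$-spaces of $\sym{C}$ are finite-dimensional. For faithfulness I would use that $\hat V$ is a progenerator: since every object of the finitely semisimple category $\sym{C}$ is a direct sum of simples, each a summand of $\hat V$, the functor $\hat V\otimes-$ is faithful and the morphisms $v\colon\hat V\to\hat V\otimes X$ jointly cover $\hat V\otimes X$; hence $\omega f=0$ forces $\id_{\hat V}\otimes f=0$ and therefore $f=0$.

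Next I would check that the pairing $g_X$ of~\eqref{eq_gx} is non-degenerate, so that the dual bases $(e_j^{(X)})_j$ and $(e^j_{(X)})_j$ exist and the formulas for $\omega^0$ and $\omega^{X,Y}$ are well defined. Decomposing $\hat V\otimes X$ into simple summands, the composition map $\Hom(\hat V\otimes X,\hat V)\otimes\Hom(\hat V,\hat V\otimes X)\to R$, $\theta\otimes v\mapsto\theta\circ v$, followed by the Frobenius counit $\epsilon_R$ of the index-one algebra $R$, pairs the two $\Hom$-spaces perfectly; this is exactly where Frobenius separability of $R$ (Remark~\ref{rem_endvhat}) enters. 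Non-degeneracy yields the completeness relation $v=\sum_j e_j^{(X)}\,g_X(e^j_{(X)}\otimes v)$ for all $v\in\omega X$, which I will use repeatedly.

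I would then turn to the monoidal axioms. The lax coherence (the associativity pentagon and the two unit triangles) for $(\omega,\omega_{X,Y},\omega_0)$ I would verify by a diagram chase that reduces, through naturality of $\alpha$ and the pentagon axiom of $\sym{C}$, to the associativity of composition in $\sym{C}$, with the unit constraints $\omega_0,\lambda,\rho$ and the triangle axiom handling the unitality conditions. The oplax coherence for $(\omega,\omega^{X,Y},\omega^0)$ then follows by the dual chase, transporting the identities across via the completeness relation. Finally, the two Frobenius squares in the definition of a Frobenius structure I would obtain by inserting the explicit formulas and collapsing the dual-basis sums, again using completeness.

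The main obstacle, and the conceptual heart of the statement, is the separability identity $\omega_{X,Y}\circ\omega^{X,Y}=\id_{\omega(X\otimes Y)}$. Substituting the two formulas produces a double sum over the dual bases of $\omega X$ and $\omega Y$ weighted by $\epsilon_R$, and collapsing it to the identity requires not merely completeness of the dual bases but also that the separability idempotent $\Delta_R(\id_R)=\sum_j\lambda_j\otimes\lambda_j$ of $R$ splits the multiplication $R\otimes R\to R$. In other words, $\omega_{X,Y}\circ\omega^{X,Y}=\id$ is the categorical incarnation of $R$ being index one: were $R$ merely Frobenius but not separable, this composite would be an idempotent rather than the identity. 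The delicate bookkeeping is tracking the orthogonal idempotents $\lambda_j$ through the $R$-$R$-bimodule structure of $\omega X$ and $\omega Y$ and through the associator $\alpha_{\hat V,X,Y}$; all of this is consistent with, and is the explicit local form of, the general reconstruction results of~\cite{Ha99b,Sz05,Pf09a}.
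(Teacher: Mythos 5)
Your plan is sound, but note first that the paper contains no proof of this proposition to compare against: it is stated in the preliminaries as recalled background, with the proof residing in the cited literature~\cite{Ha99b,Sz05,Pf09a}, and the direct verification you outline is precisely the route taken there, with the index-one Frobenius structure of $R=\End(\hat V)$ from Remark~\ref{rem_endvhat} (in particular $\epsilon_R(\lambda_j)=1$ for all $j\in I$) as the decisive input. Your treatment of $k$-linearity, faithfulness via the progenerator property of $\hat V$, and non-degeneracy of $g_X$ via split semisimplicity is correct.

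Two refinements would tighten the sketch. First, in the separability step, the completeness relation $v=\sum_j e_j^{(X)}\,g_X(e^j_{(X)}\otimes v)$ by itself collapses the double sum only on elements of $\omega(X\otimes Y)$ of the form $\omega_{X,Y}(f\otimes g)$; to conclude $\omega_{X,Y}\circ\omega^{X,Y}=\id$ on all of $\omega(X\otimes Y)$ you must also know that such elements span, i.e.\ that $\omega_{X,Y}$ is surjective --- equivalently the identification $\omega(X\otimes Y)\cong\omega X\otimes_R\omega Y$ that the paper's introduction asserts is ``not difficult to see.'' This again uses split semisimplicity (decompose $\hat V\otimes X$ into simples, each a summand of $\hat V$, so that every $h$ factors through the required form), parallel to your faithfulness argument, and it should be stated explicitly rather than folded into ``bookkeeping.'' Second, your parenthetical claim that without separability the composite ``would be an idempotent rather than the identity'' has the two composites interchanged: the idempotent is $\omega^{X,Y}\circ\omega_{X,Y}$, the analogue of the truncation idempotent $P_{X,Y}$ of Proposition~\ref{prop_forgetful}, whereas for a Frobenius but non-index-one structure on $R$ the composite $\omega_{X,Y}\circ\omega^{X,Y}$ would be governed by the index element of $R$ and need not be idempotent at all. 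Neither point invalidates your approach; with the surjectivity of $\omega_{X,Y}$ made explicit, the verification goes through as you describe.
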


\begin{remark}
\begin{myenumerate}
\item
It can be shown that the long canonical functor already has a separable
Frobenius structure if each simple object $X\in|\sym{C}|$ has $\End(X)$ a
finite-dimensional separable division algebra over $k$. Such an algebra admits
an index one Frobenius structure~\cite{KaSz03}. For our construction below in
terms of the dimension graph, however, we require the stronger condition that
$\End(X)\cong k$.
\item
Since $\sym{C}$ is semisimple, there is no need to worry about
exactness of $\omega$ at this point. Thanks to the equivalence
$\sym{C}\simeq\sym{M}^H$ in Theorem~\ref{thm_tk} below
(see~\cite{Pf09a}), $\sym{C}$ is abelian and $\omega$ exact.
\end{myenumerate}
\end{remark}

By a generalization of Tannaka--Kre\v\i n reconstruction from strong
monoidal functors to functors with separable Frobenius structure, we
obtain the following characterization of $\sym{C}$ as the category
$\sym{C}\simeq\sym{M}^H$ of finite-dimensional comodules over the
universal coend $H=\coend(\sym{C},\omega)$. The long canonical functor
appears as the forgetful functor $\omega\colon\sym{M}^H\to\Vect_k$.

\begin{theorem}
\label{thm_tk}
Let $\sym{C}$ be as in Definition ~\ref{def_longforget}. Then
$\sym{C}\simeq\sym{M}^H$ are equivalent as $k$-linear, additive monoidal
categories. Here $H=\coend(\sym{C},\omega)$ is a finite-dimensional split
cosemisimple WBA such that $H_s\cong R\cong H_t$. The WBA $H$ is a direct sum
of matrix coalgebras,
\begin{equation}
H = \bigoplus_{j\in I}{(\omega V_j)}^\ast\otimes\omega V_j,
\end{equation}
with operations
\begin{eqnarray}
\mu({[\theta|v]}_X\otimes {[\zeta|w]}_Y)
&=& {[\zeta\circ(\theta\otimes\id_Y)\circ\alpha_{\hat V,X,Y}^{-1}|
\alpha_{\hat V,X,Y}\circ(v\otimes\id_Y)\circ w]}_{X\otimes Y},\\
\eta(1) &=& {[\rho_{\hat V}|\rho_{\hat V}^{-1}]}_\one,\\
\Delta({[\theta|v]}_X)
&=& \sum_j{[\theta|e_j^{(X)}]}_X\otimes {[e^j_{(X)}|v]}_X,\\
\epsilon ({[\theta|v]}_X) &=& \epsilon_R(\theta\circ v).
\end{eqnarray}
Here we write ${[\theta|v]}_X\in{(\omega X)}^\ast\otimes\omega X$ with
$v\in\omega X$, $\theta\in\Hom(\hat V\otimes X,\hat V)\cong{(\omega X)}^\ast$
and simple $X\in|\sym{C}|$ for the homogeneous elements of $H$. The precise
form of the universal coend as a colimit also allows us to use the same
expression for arbitrary objects of $\sym{C}$, but subject to the relations
that $[{\zeta|(\omega f)(v)]}_Y ={[{(\omega f)}^\ast(\zeta)|v]}_X$ for all
$v\in\omega X$, $\zeta\in{(\omega Y)}^\ast$ and for all morphisms $f\colon
X\to Y$ of $\sym{C}$. Recall that $(\omega f)(v)=(\id_{\hat V}\otimes f)\circ
v$ and ${(\omega f)}^\ast(\zeta)=\zeta\circ(\id_{\hat V}\otimes f)$.

If in addition, $\sym{C}$ is left-autonomous, then $H$ forms a WHA with
antipode
\begin{equation}
S({[e^j_{(X)}|e_\ell^{(X)}]}_X)
= {[\tilde e^\ell_{(X^\ast)}|\tilde e_j^{(X^\ast)}]}_{X^\ast}
\end{equation}
where ${(\tilde e_j^{(X^\ast)})}_j$ denotes the basis of $\omega(X^\ast)$
defined by
\begin{equation}
\tilde e_j^{(X^\ast)} = (e^j_{(X)}\otimes\id_{X^\ast})
\circ\alpha^{-1}_{\hat V,X,X^\ast}\circ(\id_{\hat V}\otimes\coev_X)
\circ\rho_{\hat V}^{-1},
\end{equation}
and where ${(\tilde e^j_{(X^\ast)})}_j$ is the basis dual to it with
respect to the bilinear form $g_{X^\ast}$, \cf~\eqref{eq_gx}.
\end{theorem}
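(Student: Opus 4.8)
The plan is to build $H=\coend(\sym{C},\omega)$ explicitly, equip it with its weak bialgebra structure, identify its comodule category with $\sym{C}$, and finally supply the antipode; this largely recapitulates the generalized reconstruction of~\cite{Pf09a,Mc09}, organized around the separable Frobenius structure of $\omega$.

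First I would compute the coend. Writing it as a colimit, $\coend(\sym{C},\omega)=\int^{X}(\omega X)^\ast\otimes\omega X$ is the quotient of $\bigoplus_{X}(\omega X)^\ast\otimes\omega X$ by the dinaturality relations $[\zeta|(\omega f)(v)]_Y=[(\omega f)^\ast(\zeta)|v]_X$ stated in the theorem. Because $\sym{C}$ is finitely split semisimple, every object is a finite direct sum of the $V_j$ and $\End(V_j)\cong k$, so these relations collapse the sum onto its homogeneous summands for the simple objects, giving $H=\bigoplus_{j\in I}(\omega V_j)^\ast\otimes\omega V_j$. Each summand is the comatrix coalgebra on the finite-dimensional space $\omega V_j$, whence $H$ is finite-dimensional, carries the stated coalgebra structure $(\Delta,\epsilon)$ with $\epsilon([\theta|v]_X)=\epsilon_R(\theta\circ v)$, and is split cosemisimple as a direct sum of matrix coalgebras. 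Coassociativity and the counit axiom follow from the dual-basis expansions and the non-degeneracy of the pairing $g_X$ of~\eqref{eq_gx}.

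Next I would install the algebra structure. The formulas for $\mu$ and $\eta$ are forced by the lax monoidal structure $(\omega_{X,Y},\omega_0)$ of the long canonical functor; one checks they descend to the quotient by dinaturality, that associativity follows from the associativity constraint $\alpha$ together with the pentagon, and that the unit axioms follow from $\omega_0=\rho_{\hat V}^{-1}$ and the triangle identities. The genuinely weak step, and the part I expect to be the main obstacle, is verifying the three compatibility conditions of the weak bialgebra definition rather than the strict bialgebra axiom. Here the non-strength of $\omega$ is essential: $\Delta\circ\mu$ differs from $(\mu\otimes\mu)\circ(\id\otimes\sigma\otimes\id)\circ(\Delta\otimes\Delta)$ by exactly the correction encoded in the idempotent $P_{X,Y}$ of~\eqref{eq_idempotent}, which is built from the oplax structure $\omega^{X,Y}$. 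The identity that makes the weak compatibility hold on the nose is the separable Frobenius axiom $\omega_{X,Y}\circ\omega^{X,Y}=\id_{\omega(X\otimes Y)}$; tracking its interplay with $\Delta$ is where the computation is delicate. Once this is done, I would compute $\epsilon_s$ and $\epsilon_t$ from their defining expressions and identify $H_s\cong R\cong H_t$ using $\omega\one\cong R$ and the Frobenius separable structure of $R$ from Remark~\ref{rem_endvhat}.

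Then I would establish the monoidal equivalence. The universal dinatural family endows each $\omega X$ with a coaction $\beta_{\omega X}$, producing a functor $\tilde\omega\colon\sym{C}\to\sym{M}^H$ that lifts $\omega$ along the forgetful functor of Proposition~\ref{prop_forgetful}. Faithfulness of $\tilde\omega$ is inherited from $\omega$; fullness and essential surjectivity follow from cosemisimplicity of $H$, since the $\omega V_j$ are pairwise non-isomorphic simple comodules that exhaust the simple $H$-comodules and match the simple objects of $\sym{C}$. For monoidality one uses that the truncated tensor product $\hotimes=\im P_{X,Y}$ is precisely the image in the separable Frobenius factorization $P_{X,Y}=\im P_{X,Y}\circ\coim P_{X,Y}$, so that $\omega_{X,Y}$ restricts to a natural isomorphism $\tilde\omega X\hotimes\tilde\omega Y\xrightarrow{\ \sim\ }\tilde\omega(X\otimes Y)$ compatible with the unit constraints. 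Finally, in the left-autonomous case I would define $S$ by the stated formula in terms of the chosen left-duals and the bases $\tilde e$ of $\omega(X^\ast)$; the first two antipode axioms reduce, after applying $\omega$ and using its Frobenius structure, to the snake identities for $\ev_X$ and $\coev_X$, the third then follows formally, and Proposition~\ref{prop_autonomous} shows the resulting WHA has $\sym{M}^H$ left-autonomous compatibly with the equivalence.
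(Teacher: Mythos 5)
Your proposal is correct and takes essentially the same route as the paper, which states Theorem~\ref{thm_tk} without proof and defers to the reconstruction of~\cite{Pf09a,Mc09}: your sketch --- the coend computed as the dinaturality quotient and collapsed onto the simple summands by split semisimplicity, the multiplication induced by $(\omega_{X,Y},\omega_0)$ with the separability axiom $\omega_{X,Y}\circ\omega^{X,Y}=\id_{\omega(X\otimes Y)}$ securing the weak compatibility conditions, the lifted functor $\tilde\omega$ giving the $k$-linear monoidal equivalence via cosemisimplicity, and the antipode defined from the chosen left-duals with the axioms reducing to the triangle identities --- is precisely that cited construction. One phrase needs care: $\Delta\circ\mu=(\mu\otimes\mu)\circ(\id_H\otimes\sigma_{H,H}\otimes\id_H)\circ(\Delta\otimes\Delta)$ is the first WBA axiom and must hold on the nose rather than ``up to the idempotent $P_{X,Y}$'', but you correctly identify the separable Frobenius axiom as what makes it exact, so this is a matter of wording, not a gap.
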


\begin{remark}
\begin{myenumerate}
\item
If the monoidal unit $\one$ is simple, the base algebras intersect
trivially, $H_s\cap H_t\cong k$. If $\sym{C}$ is braided, $H$ is
coquasi-triangular and $\sym{C}\simeq\sym{M}^H$ an equivalence of
braided monoidal categories. If $\sym{C}$ is symmetric monoidal, $H$
is cotriangular. Further structure and properties of $\sym{C}$ such
as a pivotal structure, a ribbon structure, or the properties that a
pivotal category $\sym{C}$ be spherical or that a ribbon category
$\sym{C}$ be modular, can be translated into additional structure
and properties of $H=\coend(\sym{C},\omega)$ as
well~\cite{Pf09a,Pf09b}.
\item
Note that if $X\in|\sym{C}|$ is an arbitrary object, then $\omega X$ forms a
right-$H$ comodule with the coaction
\begin{equation}
\beta_{\omega X}\colon\omega X\to\omega X\otimes H,\quad
v\mapsto\sum_je^{(X)}_j\otimes{[e^j_{(X)}|v]}_X.
\end{equation}
Its coefficient coalgebra is given by $C(X)={({(\omega
X)}^\ast\otimes\omega X)}/N_X\subseteq H$ where the subspace
$N_X\subseteq{(\omega X)}^\ast\otimes\omega X$ is generated by the
elements
\begin{equation}
{[\theta|(\omega f)(v)]}_X-{[{(\omega f)}^\ast(\theta)|v]}_X
\end{equation}
for all $v\in\omega X$, $\theta\in{(\omega X)}^\ast$ and
$f\in\End(X)$.
\end{myenumerate}
\end{remark}

\section{A combinatorial cover of the universal coend}
\label{sect_combinatorial}

In order to develop a combinatorial description of a given category
$\sym{C}$ with the properties as in Definition~\ref{def_longforget},
we first construct a WBA $H[\sym{G}]$ in combinatorial terms and a
surjection $\pi\colon H[\sym{G}]\to H$ onto the universal coend
$H=\coend(\sym{C},\omega)$.

\subsection{Weak Bialgebras associated with finite directed graphs}

Let $\sym{G}=(\sym{G}^0,\sym{G}^1)$ be a finite directed graph with a set
$\sym{G}^0$ of vertices and a set $\sym{G}^1\subseteq\sym{G}^0\times\sym{G}^0$
of edges. We use the following notation and terminology. Every edge
$p=(v_0,v_1)\in\sym{G}^1$ has a source and a target vertex, denoted by
$\sigma(p)=v_1\in\sym{G}^0$ and $\tau(p)=v_0\in\sym{G}^0$, respectively. We
also set $\sigma(v)=v=\tau(v)$ for all $v\in\sym{G}^0$. By
\begin{equation}
\sym{G}^m=\{\,(p_1,\ldots,p_m)\in{(\sym{G}^1)}^m\mid\quad
\sigma(p_j)=\tau(p_{j+1})\quad\mbox{for all}\quad 1\leq j\leq m-1\,\},
\end{equation}
we denote the set of paths of length $m$ in $\sym{G}$, $m\in\N$. Finally, for
vertices $v,w\in\sym{G}^0$, the set
\begin{equation}
\sym{G}^m_{wv}=\{\,p\in\sym{G}^m\mid\quad\sigma(p)=v,\quad\tau(p)=w\,\}
\end{equation}
contains all paths of length $m\in\N_0$ from $v$ to $w$.

We write $pq\in\sym{G}^{\ell+m}$ for the concatenation of two paths
$p\in\sym{G}^\ell$ and $q\in\sym{G}^m$ provided that $\sigma(p)=\tau(q)$. The
free $k$-vector space on the set $\sym{G}^m$ is denoted by $k\sym{G}^m$,
$m\in\N_0$.

\begin{proposition}
\label{prop_defhg}
Let $\sym{G}$ be a finite directed graph. Then there is a WBA
$(H[\sym{G}],\mu,\eta,\Delta,\epsilon)$ with the underlying vector space
\begin{equation}
H[\sym{G}] = \coprod_{m\in\N_0}{(k\sym{G}^m)}^\ast\otimes k\sym{G}^m
\end{equation}
and operations
\begin{eqnarray}
\eta(1)
&=& \sum_{j,\ell\in\sym{G}^0}{[j|\ell]}_0,\\
\mu({[p|q]}_m\otimes {[r|s]}_\ell)
&=& \delta_{\sigma(p),\tau(r)}\delta_{\sigma(q),\tau(s)}{[pr|qs]}_{m+\ell},\\
\Delta({[p|q]}_m)
&=& \sum_{r\in\sym{G}^m}{[p|r]}_m\otimes {[r|q]}_m,\\
\epsilon({[p|q]}_m)
&=& \delta_{pq},
\end{eqnarray}
for all $p,q\in\sym{G}^m$, $r,s\in\sym{G}^\ell$, $m,\ell\in\N_0$.  Here,
$\coprod$ is the coproduct in $\Vect_k$, and we have denoted basis vectors of
the homogeneous components ${H[\sym{G}]}_m={(k\sym{G}^m)}^\ast\otimes
k\sym{G}^m$ of $H[\sym{G}]$ by ${[p|q]}_m=p\otimes q\in
{(k\sym{G}^m)}^\ast\otimes k\sym{G}^m$. As usual, we write $\delta_{pq}=1$ if
$p=q$ and $\delta_{pq}=0$ if $p\neq q$ for all $p,q\in\sym{G}^m$, $m\in\N_0$.
\end{proposition}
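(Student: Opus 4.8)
The plan is to verify the Weak Bialgebra axioms directly on the homogeneous basis $\{[p|q]_m\}$, exploiting the grading by path length. Recognizing the algebra $(H[\sym{G}],\mu,\eta)$ as the path algebra of the product quiver $\sym{G}\times\sym{G}$ and the coalgebra $(H[\sym{G}],\Delta,\epsilon)$ as a direct sum of comatrix coalgebras, one for each $m\in\N_0$, makes the two structures in isolation essentially standard; the content of the proposition lies in the three compatibility conditions.

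First I would establish the algebra structure. Associativity of $\mu$ reduces to associativity of path concatenation together with the observation that the Kronecker factors $\delta_{\sigma(\cdot),\tau(\cdot)}$ governing composability stack consistently under both applications of $\mu$. For the unit, the length-zero basis vectors $[j|\ell]_0$ are indexed by pairs of vertices; computing $\mu(\eta(1)\otimes[p|q]_m)$, the composability deltas single out $j=\tau(p)$ and $\ell=\tau(q)$, and prepending these vertices to $p$ and $q$ returns $[p|q]_m$ unchanged (the right-unit law is symmetric). The coalgebra axioms are verified componentwise: since $\Delta$ preserves the grading, coassociativity and counitality are exactly the comatrix identities $\sum_{r,s}[p|s]_m\otimes[s|r]_m\otimes[r|q]_m$ on each summand, together with $\sum_r\delta_{pr}[r|q]_m=[p|q]_m$.

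The heart of the argument is the multiplicativity of $\Delta$. Expanding the right-hand side $(\mu\otimes\mu)\circ(\id\otimes\sigma_{H,H}\otimes\id)\circ(\Delta\otimes\Delta)$ on $[p|q]_m\otimes[r|s]_\ell$ produces a sum over composable pairs $(a,b)$ with $a\in\sym{G}^m$, $b\in\sym{G}^\ell$, $\sigma(a)=\tau(b)$, carrying the overall factor $\delta_{\sigma(p),\tau(r)}\delta_{\sigma(q),\tau(s)}$. The key combinatorial fact is that concatenation gives a bijection between such composable pairs and paths $t=ab\in\sym{G}^{m+\ell}$; this identifies the sum with $\sum_{t}[pr|t]_{m+\ell}\otimes[t|qs]_{m+\ell}$, which is precisely $\Delta(\mu([p|q]_m\otimes[r|s]_\ell))$. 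This unique factorization of a long path into a composable pair is exactly what encodes the truncation of the tensor product.

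Finally I would check the two weak counit and two weak unit conditions. For the counit condition one evaluates both triple products on $[p|q]_m\otimes[r|s]_\ell\otimes[t|u]_n$: after applying $\epsilon$, the diagonal constraints $\delta_{pq}$, $\delta_{rs}$, $\delta_{tu}$ appear and render several of the source/target composability deltas redundant, so that the $\Delta$- and $\Delta^\op$-versions both collapse to the same product of deltas. The unit condition is the dual computation, using that concatenation of length-zero paths is defined only on coinciding vertices. I expect the main obstacle to be purely bookkeeping: tracking how the source/target deltas combine, and confirming that under the diagonal constraints imposed by $\epsilon$ (respectively by the vertex-only concatenation forced by $\eta$) the two expressions in each weak axiom agree. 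This is where the non-grouplike unit and non-multiplicative counit characteristic of a genuine WBA manifest, and where the sign-free but intricate index manipulation must be carried out with care.
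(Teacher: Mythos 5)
Your proposal is correct and takes the same approach as the paper, whose proof is simply the statement ``Direct verification'': you carry out precisely that verification, and the key points you identify --- the bijection between composable pairs $(a,b)\in\sym{G}^m\times\sym{G}^\ell$ with $\sigma(a)=\tau(b)$ and paths $t=ab\in\sym{G}^{m+\ell}$ for the multiplicativity of $\Delta$, and the collapse of the summation index in the weak counit and unit axioms --- are exactly the computations the paper leaves implicit. All the index manipulations you outline check out as described.
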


\begin{proof}
Direct verification.
\end{proof}

\begin{proposition}
\label{prop_hg}
Let $\sym{G}$ be a finite directed graph and $H[\sym{G}]$ as in
Proposition~\ref{prop_defhg}.
\begin{myenumerate}
\item
The source and target counital maps of $H[\sym{G}]$ are given by
\begin{eqnarray}
\epsilon_s({[p|q]}_m)
&=& \delta_{p,q}\,\sum_{j\in\sym{G}^0}{[j|\sigma(p)]}_0,\\
\epsilon_t({[p|q]}_m)
&=& \delta_{p,q}\,\sum_{j\in\sym{G}^0}{[\tau(q)|j]}_0,
\end{eqnarray}
for all $p,q\in\sym{G}^m$, $m\in\N_0$.
\item
$H[\sym{G}]$ is split cosemisimple. Its simple right comodules are the
vector spaces $k\sym{G}^m$, $m\in\N_0$, with the coactions
\begin{equation}
\beta_{k\sym{G}^m}\colon k\sym{G}^m\to k\sym{G}^m\otimes H[\sym{G}],
p\mapsto\sum_{q\in\sym{G}^m} q\otimes{[q|p]}_m.
\end{equation}
\item
The truncated tensor product of $\sym{M}^{H[\sym{G}]}$ is such that
\begin{equation}
k\sym{G}^m\hotimes k\sym{G}^\ell\cong k\sym{G}^{m+\ell}
\end{equation}
for all $m,\ell\in\N_0$.
\item
The unital algebra underlying $H[\sym{G}]$ is graded with
homogeneous components ${H[\sym{G}]}_m$ of degree $m\in\N_0$.
\item
As an associative algebra or as a unital associative algebra, $H[\sym{G}]$
is generated by the set ${H[\sym{G}]}_0\cup{H[\sym{G}]}_1$.
\end{myenumerate}
\end{proposition}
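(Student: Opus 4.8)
The plan is to verify each of the five claims directly from the explicit operations of $H[\sym{G}]$ given in Proposition~\ref{prop_defhg}, mostly by unwinding definitions. For item~(1), I would substitute the formulas for $\mu$, $\Delta$, $\eta$ into the defining expressions for $\epsilon_s$ and $\epsilon_t$ and simplify. Concretely, applying $\id\otimes\eta$ to ${[p|q]}_m$ produces terms ${[p|q]}_m\otimes{[j|\ell]}_0$; then $\id\otimes\Delta$ keeps the first factor and comultiplies the grouplike ${[j|\ell]}_0$; the transposition and $\id\otimes\mu$ multiply ${[j|\ell]}_0$ into the first tensor slot, forcing the source/target matching constraints encoded by the Kronecker deltas in $\mu$; finally $\id\otimes\epsilon$ extracts a $\delta$ on the paired indices. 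The counit $\epsilon({[p|q]}_m)=\delta_{pq}$ is what produces the overall $\delta_{p,q}$, and the surviving sum over $\sym{G}^0$ gives the stated form. The computation for $\epsilon_t$ is the mirror image.

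For items~(2)--(4) I would read off the structure from the grading. The key observation is that $\Delta$ does not mix homogeneous components: $\Delta({[p|q]}_m)\in{H[\sym{G}]}_m\otimes{H[\sym{G}]}_m$, so each ${H[\sym{G}]}_m={(k\sym{G}^m)}^\ast\otimes k\sym{G}^m$ is a subcoalgebra, and in fact a matrix coalgebra on the basis $\sym{G}^m$. Hence $H[\sym{G}]=\coprod_m {H[\sym{G}]}_m$ is a direct sum of matrix coalgebras, which is precisely split cosemisimplicity; the simple comodules are the $k\sym{G}^m$ with the stated coaction, and this establishes~(2). For~(3) I would compute the idempotent $P_{k\sym{G}^m,k\sym{G}^\ell}$ of~\eqref{eq_idempotent} on $p\otimes q$ for paths $p\in\sym{G}^m$, $q\in\sym{G}^\ell$; using the coactions from~(2) and the counit $\epsilon({[r|p]}_m{[s|q]}_\ell)$, the product ${[r|p]}_m{[s|q]}_\ell$ is nonzero only when $\sigma(r)=\tau(s)$ and $\sigma(p)=\tau(q)$, and then $\epsilon$ of the concatenation forces $r=p$, $s=q$ together with composability. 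Thus $P$ projects onto the span of composable pairs $p\otimes q$, whose image is naturally identified with $k\sym{G}^{m+\ell}$ via concatenation, giving the isomorphism $k\sym{G}^m\hotimes k\sym{G}^\ell\cong k\sym{G}^{m+\ell}$. Item~(4) is immediate from the multiplication rule, which sends ${H[\sym{G}]}_m\otimes{H[\sym{G}]}_\ell$ into ${H[\sym{G}]}_{m+\ell}$, together with $\eta(1)\in{H[\sym{G}]}_0$, so the algebra is $\N_0$-graded.

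For item~(5), the statement that ${H[\sym{G}]}_0\cup{H[\sym{G}]}_1$ generates $H[\sym{G}]$ as an algebra follows from~(4) by an induction on the degree $m$. The inductive engine is the identity that any basis vector ${[p|q]}_m$ with $m\geq 2$ can be written as a product of lower-degree generators: decomposing the paths $p=p_1\cdots p_m$ and $q=q_1\cdots q_m$ into their constituent edges, the multiplication rule gives ${[p_1|q_1]}_1\cdots{[p_m|q_m]}_m$, with the Kronecker deltas in $\mu$ automatically satisfied because $p$ and $q$ are genuine paths (so consecutive edges are composable). The only subtlety is bookkeeping: the product of degree-one generators ${[p_1|q_1]}_1\cdots{[p_m|q_m]}_1$ yields exactly ${[p_1\cdots p_m|q_1\cdots q_m]}_m={[p|q]}_m$ when all matching conditions hold and vanishes otherwise, so no spurious terms appear. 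The main obstacle, such as it is, lies in item~(1): the source and target counital maps are five-fold composites and one must carefully track the Sweedler indices through the transposition and the two multiplications to see that the weak-bialgebra structure collapses them to the stated projections onto the base algebra; the remaining items are essentially direct consequences of the grading and are routine once~(1) is in hand.
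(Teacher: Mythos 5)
Your proposal is correct and follows essentially the same route as the paper: the paper's proof is a terse ``direct computation'' for Part~(1), observes that the homogeneous components ${H[\sym{G}]}_m$ are matrix coalgebras for Part~(2), computes the idempotent $P_{k\sym{G}^m,k\sym{G}^\ell}$ (finding $p\otimes q\mapsto p\otimes q$ if $\sigma(p)=\tau(q)$ and $0$ otherwise) for Part~(3), and invokes composability and additivity of path length for Parts~(4) and~(5) --- exactly the computations you flesh out, including the key factorization ${[p|q]}_m={[p_1|q_1]}_1\cdots{[p_m|q_m]}_1$. Aside from a harmless slip in describing which tensor slot the product lands in during the $\epsilon_s$ trace (after the transposition the multiplication occurs in the second slot), your unwinding of the five-fold composite yields the stated formulas, so the argument is sound.
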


\begin{proof}
Part~1 is established by a direct computation. Part~2 holds because the
homogeneous components ${H[\sym{G}]}_m$ are matrix coalgebras with coefficients
in $k$. For Part~3, we compute the idempotent~\eqref{eq_idempotent} and find
that for all $m,\ell\in\N_0$, $p\in\sym{G}^m$, $q\in\sym{G}^\ell$,
\begin{equation}
P_{k\sym{G}^m,k\sym{G}^\ell}(p\otimes q) = \left\{
\begin{matrix}
p\otimes q & \mbox{if}\quad\sigma(p)=\tau(q),\\
0          & \mbox{otherwise}.
\end{matrix}\right.
\end{equation}
Part~4 holds because multiplication in $H[\sym{G}]$ is zero unless the paths
in both components of $[-|-]$ are composable. Part~5 holds because the length
of paths is additive under concatenation.
\end{proof}

\begin{remark}
\begin{myenumerate}
\item
The algebra underlying $H[\sym{G}]$ is the \emph{path algebra}
$k\Gamma$ of the \emph{quiver} $\Gamma=\sym{G}\times\sym{G}$, up to
identifying ${(k\sym{G}^m)}^\ast=k\sym{G}^m$. We do not use the
terminology \emph{quiver} in the present article because it is not
the category of modules over $k\Gamma$, but rather that of comodules
that is related to our fusion category $\sym{C}$.
\item
The category $\sym{M}^{H[\sym{G}]}$ of finite-dimensional
right-$H[\sym{G}]$ comodules is an essentially small, split
semisimple, $k$-linear, abelian monoidal category whose isomorphism
classes of simple objects are indexed by non-negative integers
$m\in\N_0$. The tensor product is given by $m\otimes\ell\cong
m+\ell$ for all $m,\ell\in\N_0$. The forgetful functor
$U\colon\sym{M}^{H[\sym{G}]}\to\Vect_k$ is such that $Um\cong
k\sym{G}^m$. Conversely, $H[\sym{G}]\cong\coend(\sym{M}^{H[\sym{G}]},U)$.
\end{myenumerate}
\end{remark}

\subsection{The fundamental surjection}

Although the truncation of the tensor product in
Proposition~\ref{prop_hg}(3) is rather elementary, this is the
mechanism that controls the truncation of the tensor product in all
fusion categories. We demonstrate this by constructing a surjection
$H[\sym{G}]\to H$.

\begin{definition}
\label{def_generates}
Let $\sym{C}$ be an essentially small, finitely split semisimple,
$k$-linear, additive left-autonomous monoidal category such that $k$
is a field and $\Hom(X,Y)$ is finite-dimensional over $k$ for all
$X,Y\in|\sym{C}|$. An object $M\in|\sym{C}|$ is said to
\emph{generate} $\sym{C}$ \emph{as a fusion category} if the following
conditions are satisfied.
\begin{myenumerate}
\item
Every simple object $V_j$, $j\in I$, of $\sym{C}$ appears as a direct
summand of $M^{\otimes n}$ for some $n\in\N_0$.
\item
The object $M$ is \emph{multiplicity free}, \ie\ if $M\cong X\oplus Y\oplus
Z$, then $X\not\cong Y$.
\item
The monoidal unit $\one$ and $M$ have pairwise non-isomorphic direct
summands, \ie\ if $\one\cong X\oplus Y$ and $M\cong Z\oplus W$, then
$X\not\cong Z$.
\end{myenumerate}
\end{definition}

\begin{remark}
\begin{myenumerate}
\item
Part~(1) is the usual definition, but~(2) and~(3) can be required in
addition without loss of generality. Note that Part~(3) rules out
the trivial fusion category, but every non-trivial such category
does have a generating object.
\item
The monoidal unit $\one$ is always
multiplicity-free~\cite{EtNi05}. In the present section and in
Section~\ref{sect_schurweyl}, the assumption that $\sym{C}$ be
autonomous can be dropped if one requires instead that the monoidal
unit be multiplicity-free.
\end{myenumerate}
\end{remark}

Given a fusion category $\sym{C}$ with a generating object
$M\in|\sym{C}|$, we now choose a graph $\sym{G}$ in such a way that we
obtain a surjection of WBAs $\pi\colon H[\sym{G}]\to H$. Then the
composability of paths in $\sym{G}$ which controls the multiplication
in $H[\sym{G}]$, also governs the truncated tensor product in
$\sym{C}\cong\sym{M}^H$.

\begin{definition}
\label{def_graph}
Let $\sym{C}$ be an essentially small, finitely split semisimple,
$k$-linear, additive, left-autonomous monoidal category such that $k$
is a field and $\Hom(X,Y)$ is finite-dimensional over $k$ for all
$X,Y\in|\sym{C}|$. Let $M\in|\sym{C}|$ be an object that generates
$\sym{C}$ and let ${\{V_j\}}_{j\in I}$ denote a set of representatives
of the isomorphism classes of simple objects of $\sym{C}$.  The
\emph{dimension graph $\sym{G}$ of $\sym{C}$ with respect to $M$} is
the finite directed graph whose set of vertices is $\sym{G}^0=I$ and
whose set $\sym{G}^1_{\ell j}$ of edges from $j\in\sym{G}^0$ to
$\ell\in\sym{G}^0$ is a basis of $\Hom(V_j,V_\ell\otimes M)$.
\end{definition}

\begin{remark}
If $M\in|\sym{C}|$ is simple and, say, $M\cong V_1$, then the
adjacency matrix of $\sym{G}$ is the fusion matrix $N_1$ with
coefficients ${(N_1)}_{j\ell}=\dim_k\Hom(V_j,V_\ell\otimes V_1)$.
\end{remark}

We denote a basis of $\omega M=\Hom(\hat V,\hat V\otimes M)$ by
${\{e_p^{(M)}\}}_p$ and by ${\{e_{(M)}^q\}}_q$ its dual basis with respect to
the bilinear form $g_M$ of~\eqref{eq_gx}. We also choose the basis
${\{e^{(\one)}_j\}}_j$, $e^{(\one)}_j:=\rho_{\hat V}^{-1}\circ\lambda_j$ of
$\omega\one=\Hom(\hat V,\hat V\otimes\one)$ whose dual basis with respect to
$g_\one$ is given by ${\{e_{(\one)}^\ell\}}_\ell$ with
$e_{(\one)}^\ell:=\lambda_\ell\circ\rho_{\hat V}$. Observe that
$k\sym{G}^1=\omega M$ and $k\sym{G}^0\cong\omega\one$ which we identify in the
following.

\begin{theorem}
\label{thm_surjection}
Let $\sym{C}$ be as in Definition~\ref{def_graph}, $M\in|\sym{C}|$ be an
object that generates $\sym{C}$ and $\sym{G}$ be the dimension graph of
$\sym{C}$ with respect to $M$. We denote by $H=\coend(\sym{C},\omega)$ the
universal coend with respect to the long canonical functor,
\cf~Theorem~\ref{thm_tk}. Then there is a surjection of WBAs $\pi\colon
H[\sym{G}]\to H$ as follows.
\begin{myenumerate}
\item
$\pi({[j|\ell]}_0)={[e^j_{(\one)}|e^{(\one)}_\ell]}_\one$ for all
$j,\ell\in\sym{G}^0$.
\item
$\pi({[p|q]}_1) = {[e^p_{(M)}|e_q^{(M)}]}_M$ for all $p,q\in\sym{G}^1$.
\item
$\pi$ pushes forward the right $H[\sym{G}]$-comodule $k\sym{G}^1$ to the
right $H$-comodule $\omega M$, \ie\
$(\id_{k\sym{G}^1}\otimes\pi)\circ\beta_{k\sym{G}^1}=\beta_{\omega M}$.
\item
$\pi$ also pushes forward the right $H[\sym{G}]$-comodule $k\sym{G}^0$ to
the right $H$-comodule $\omega\one$, \ie\
$(\id_{k\sym{G}^0}\otimes\pi)\circ\beta_{k\sym{G}^0}=\beta_{\omega\one}$.
\end{myenumerate}
\end{theorem}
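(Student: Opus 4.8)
The plan is to define $\pi$ explicitly on a spanning set of $H[\sym{G}]$, verify it respects the WBA structure, and then check the comodule-pushforward statements as consequences. By Proposition~\ref{prop_hg}(5), the algebra $H[\sym{G}]$ is generated by ${H[\sym{G}]}_0\cup{H[\sym{G}]}_1$, so it suffices to prescribe $\pi$ on the degree-$0$ and degree-$1$ homogeneous components, as stated in parts~(1) and~(2), and extend multiplicatively. To see that this extension is well defined, I would first observe that $k\sym{G}^1=\omega M$ by construction of the dimension graph (the edges are a basis of the $\Hom$-spaces assembling $\omega M$), and likewise $k\sym{G}^0\cong\omega\one$ via the identifications fixed just before the theorem. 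The formulas in~(1) and~(2) are then exactly the assignments sending the matrix-coalgebra basis vectors $[p|q]_1$ and $[j|\ell]_0$ to the corresponding coefficients $[e^p_{(M)}|e^{(M)}_q]_M$ and $[e^j_{(\one)}|e^{(\one)}_\ell]_\one$ of $H=\coend(\sym{C},\omega)$ under the description in Theorem~\ref{thm_tk}.

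Next I would verify that $\pi$ is a homomorphism of WBAs, i.e.\ compatible with $\mu,\eta,\Delta,\epsilon$. For the comultiplication and counit this is immediate from the matrix-coalgebra formulas: in both $H[\sym{G}]$ and $H$ the coproduct is the standard matrix-coalgebra coproduct summing over an intermediate basis index, and $\epsilon$ is the Kronecker/trace pairing, so $\pi$ intertwines them componentwise. The unit is handled by comparing $\eta(1)=\sum_{j,\ell}[j|\ell]_0$ in $H[\sym{G}]$ with $\eta(1)=[\rho_{\hat V}|\rho_{\hat V}^{-1}]_\one$ in $H$; under the chosen bases $e^{(\one)}_j=\rho_{\hat V}^{-1}\circ\lambda_j$ these match because $\sum_j\lambda_j=\id$. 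The one substantive computation is multiplicativity: I must check that $\pi(\mu([p|q]_m\otimes[r|s]_\ell))=\mu(\pi[p|q]_m\otimes\pi[r|s]_\ell)$. Since $\pi$ is defined multiplicatively on generators, the content is really that the \emph{relations} of $H[\sym{G}]$ — concatenation of composable paths, zero otherwise — are respected by the images in $H$. This is where the dimension graph does its work: by the computation recalled in the introduction, $\omega_{M,M}(f_1\otimes f_2)$ is nonzero precisely when the target of $f_2$ matches the source of $f_1$, which is exactly the composability condition $\sigma(p)=\tau(r)$, $\sigma(q)=\tau(s)$ appearing in $\mu$. So the multiplication formula of Theorem~\ref{thm_tk}, restricted to tensor powers of $M$, reproduces the graded concatenation product, and $\pi$ is an algebra map.

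I expect the main obstacle to be the surjectivity claim together with the bookkeeping that $\pi$ is genuinely well defined at higher tensor powers. Surjectivity requires that the images of tensor powers of $[\,\cdot\,]_1$ span all of $H=\bigoplus_{j\in I}(\omega V_j)^\ast\otimes\omega V_j$; this uses condition~(1) in Definition~\ref{def_generates}, that every simple $V_j$ occurs in some $M^{\otimes n}$, so that every coefficient coalgebra $C(V_j)$ is reached as a subquotient of the coefficients of $(\omega M)^{\hotimes n}$, together with the relations in Theorem~\ref{thm_tk} identifying coefficients across morphisms. Concretely I would argue that $\im\pi$ is a subcoalgebra containing $C(M)$ and closed under the product, hence contains $C(M^{\otimes n})$ for all $n$, and these exhaust $H$ by the generation hypothesis. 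Finally, parts~(3) and~(4) are corollaries: the coaction $\beta_{k\sym{G}^1}(p)=\sum_q q\otimes[q|p]_1$ of Proposition~\ref{prop_hg}(2) is pushed forward under $\id\otimes\pi$ to $\sum_q e^{(M)}_q\otimes[e^q_{(M)}|e^{(M)}_p]_M$, which is exactly $\beta_{\omega M}$ from Theorem~\ref{thm_tk} after the identification $k\sym{G}^1=\omega M$; the degree-$0$ case is identical with $\one$ in place of $M$.
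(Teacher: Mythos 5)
Your proposal is correct and follows essentially the same route as the paper's proof: fix $\pi$ on the degree-$0$ and degree-$1$ generators, observe that the truncation computed by $\omega_{M,M}$ matches the composability condition in the product of $H[\sym{G}]$ so that the multiplicative extension is well defined, verify compatibility with $\Delta$ and $\epsilon$, get surjectivity from the generation hypothesis via the coefficient coalgebras $C(M^{\otimes n})$, and deduce Parts~(3) and~(4) by direct computation. The only packaging difference is that the paper establishes the coalgebra compatibilities by checking them on generators and then inducting on products via the WBA axioms (in particular $\epsilon(ab)=\epsilon(a1^\prime)\epsilon(1^\pprime b)$, which is needed because $\epsilon$ is not multiplicative in a WBA), whereas you assert a componentwise matrix-coalgebra check in each degree --- which implicitly requires identifying $\pi({[p|q]}_m)$ with a coefficient in matched dual bases of $\omega(M^{\otimes m})$, a dual-basis computation of exactly the kind the paper also relegates to ``direct computation.''
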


\begin{proof}
As an associative algebra, $H[\sym{G}]$ is generated by
${H[\sym{G}]}_0\cup{H[\sym{G}]}_1$, \ie\ Parts~(1) and~(2) fix the value
of $\pi$ on a set of generators of $H[\sym{G}]$.

Given that $\sym{G}$ is the dimension graph, (1) and~(2) are compatible with
the multiplication of $H[\sym{G}]$. This can be seen by computing all products
of degree-$0$ and degree-$1$ terms of $H[\sym{G}]$ and their images under
$\pi$. Therefore, Parts~(1) and~(2) define a unique linear map $\pi\colon
H[\sym{G}]\to H$ which forms a homomorphism of associative algebras. This map
$\pi$ is also compatible with the units as can be seen by inspection.

In order to see that $\pi$ respects the comultiplication, we show in a direct
computation that $\Delta(\pi(a))=(\pi\otimes\pi)(\Delta(a))$ for all
generators $a\in{H[\sym{G}]}_0\cup{H[\sym{G}]}_1$. Then, by induction, if this
claim holds for some $a,b\in H[\sym{G}]$, we also have
\begin{eqnarray}
\Delta(\pi(ab))
&=& \Delta(\pi(a)\pi(b))
= ({\pi(a)}^\prime{\pi(b)}^\prime)\otimes({\pi(a)}^\pprime{\pi(b)}^\pprime)\nn\\
&=& (\pi(a^\prime)\pi(b^\prime))\otimes(\pi(a^\pprime)\pi(b^\pprime))
= \pi(a^\prime b^\prime)\otimes\pi(a^\pprime b^\pprime)\nn\\
&=& (\pi\otimes\pi)(\Delta(ab)),
\end{eqnarray}
because $\pi$ respects the multiplication; because $H$ is a WBA; because of
the assumption; because $\pi$ respects the multiplication; and because
$H[\sym{G}]$ is a WBA.

The map $\pi$ also respects the counit. On generators
$a\in{H[\sym{G}]}_0\cup{H[\sym{G}]}_1$, we see by inspection that
$\epsilon(\pi(a))=\epsilon(a)$. Then, by induction, if this claim holds for
some $a,b\in H[\sym{G}]$, we find that
\begin{eqnarray}
\label{eq_pirespectsepsilon}
\epsilon(\pi(ab))
&=& \epsilon(\pi(a)\pi(b))
= \epsilon(\pi(a)1\pi(b))
= \epsilon(\pi(a)1^\prime)\epsilon(1^\pprime\pi(b))\nn\\
&=& \epsilon(\pi(a){\pi(1)}^\prime)\epsilon({\pi(1)}^\pprime\pi(b))
= \epsilon(\pi(a)\pi(1^\prime))\epsilon(\pi(1^\pprime)\pi(b))\nn\\
&=& \epsilon(\pi(a1^\prime))\epsilon(\pi(1^\pprime b))
= \epsilon(ab),
\end{eqnarray}
because $\pi$ respects the multiplication; $H$ is a WBA; $\pi$ respects the
unit; $\pi$ respects the comultiplication, and $\pi$ respects the
multiplication. The last equality of~\eqref{eq_pirespectsepsilon} is shown by
a direct computation for generic $a={[p|q]}_m\in {H[\sym{G}]}$,
$b={[r|s]}_\ell\in {H[\sym{G}]}$.

At this point, we know that $\pi$ is a homomorphism of WBAs. It is surjective
because $\sym{C}$ is generated by $M$ and already the image
$\pi({H[\sym{G}]}_1)$ exhausts the coefficient coalgebra $C(M)={(\omega
M)}^\ast\otimes\omega M$.

Parts~(3) and~(4) can finally be seen in a direct computation.
\end{proof}

\begin{remark}
If both the monoidal unit $\one$ and the generating object $M$ are
simple, then the restriction
$\pi|_{{H[\sym{G}]}_0\oplus{H[\sym{G}]}_1}\colon
{H[\sym{G}]}_0\oplus{H[\sym{G}]}_1\to H$ is injective.
\end{remark}

\section{Schur--Weyl dual description at fixed powers of $M$}
\label{sect_schurweyl}

In this section, we define a quotient of the WBA $H[\sym{G}]$ in such
a way that the tensor powers of the generating object $M$ have the
desired endomorphism algebras.

\subsection{Implementing the endomorphisms of the tensor powers of $M$}

\begin{definition}
\label{def_schurweylsystem}
Let $\sym{C}$ and $M$ be as in Theorem~\ref{thm_surjection}. An
\emph{endomorphism system for $\sym{C}$ with respect to $M$} is a
sequence $\sym{E}={(E^{(n)})}_{n\in\N_0}$ of sets
$E^{(n)}\subseteq\End(M^{\otimes n})$ of endomorphisms such that
\begin{myenumerate}
\item
$\End(\one)$ as an associative algebra is generated by
$E^{(0)}\cup\{\id_\one\}$.
\item
For all $n\in\N$, $\End(M^{\otimes n})$ is generated by
\begin{equation}
(E^{(n-1)}\otimes\id_M)\cup E^{(n)},
\end{equation}
where we have abbreviated
\begin{equation}
E^{(n-1)}\otimes\id_M = \{\,f^{(n-1)}\otimes\id_M\mid\quad f^{(n-1)}\in E^{(n-1)}\,\}.
\end{equation}
\end{myenumerate}
\end{definition}

The situation is particularly easy if both $\one$ and $M$ are simple and if
$\sym{C}$ is braided with braiding $\psi_{X,Y}\colon X\otimes Y\to Y\otimes X$
such that the endomorphism algebras are already generated by the braiding and
inverse braiding of adjacent tensor factors. For $n\geq 2$, we denote by $B_n$
the associative unital algebra generated by $\{\,\psi^\pm_j\mid\quad 1\leq
j\leq n-1\,\}$ with
\begin{equation}
\psi^\pm_j:=\id_{M^{\hotimes(j-1)}}\otimes\psi^\pm_{M,M}
\otimes\id_{M^{\hotimes(n-j-1)}}.
\end{equation}

\begin{definition}
\label{def_schurweyl}
Let $\sym{C}$ and $M$ be as in Theorem~\ref{thm_surjection}. Then
$\sym{C}$ is said to satisfy the \emph{strong Schur--Weyl property} if
both $\one$ and $M$ are simple and if $\sym{C}$ is braided such that
$\End(M^{\otimes n})=B_n$ for all $n\geq 2$.
\end{definition}

\begin{example}
\label{ex_typea}
Let $\sym{C}$ and $M$ be as in Theorem~\ref{thm_surjection} and assume
that $\sym{C}$ satisfies the strong Schur--Weyl property. Then an
endomorphism system for $\sym{C}$ with respect to $M$ is given by
$E^{(0)}=\emptyset$, $E^{(1)}=\emptyset$,
\begin{eqnarray}
E^{(2)} &=& \{\,\psi_{M,M},\,\psi^{-1}_{M,M}\,\},\\
E^{(m)} &=& \{\,\id_{M^{\hotimes(m-2)}}\otimes\psi_{M,M},\,
\id_{M^{\hotimes(m-2)}}\otimes\psi^{-1}_{M,M}\,\},
\end{eqnarray}
for all $m\geq 3$.
\end{example}

Given an endomorphism system $\sym{E}={(E^{(n)})}_{n\in\N_0}$ for
$\sym{C}$ with respect to $M$, we can express the endomorphisms
$\omega f^{(n)}\colon{(\omega M)}^{\hotimes n}\to{(\omega
M)}^{\hotimes n}$, $f^{(n)}\in E^{(n)}$, $n\in\N$, as
\begin{equation}
\label{eq_coefficients}
(\omega f^{(n)})(e^{(M)}_{p_1}\otimes\cdots\otimes e^{(M)}_{p_n}) =
\sum_{r_1,\ldots,r_n\in\sym{G}^1}e^{(M)}_{r_1}\otimes\cdots\otimes e^{(M)}_{r_n}\,
f^{(n)}_{r_1\cdots r_n;p_1\cdots p_n},
\end{equation}
with coefficients $f^{(n)}_{r_1\cdots r_n;p_1\cdots p_n}\in k$. By analogy,
for $n=0$ and $\omega f^{(0)}\colon\omega\one\to\omega\one$, $f^{(0)}\in
E^{(0)}$, this is replaced by
\begin{equation}
(\omega f^{(0)})(e^{(\one)}_j) = \sum_{\ell\in\sym{G}^0}e^{(\one)}_\ell\,f^{(0)}_{\ell;j},
\end{equation}
with coefficients $f^{(0)}_{\ell;j}\in k$.

\begin{remark}
\label{rem_coefficients}
\begin{myenumerate}
\item
Recall that $\End(M^{\otimes n})\cong\End({(\omega M)}^{\hotimes n})$ in view
of the equivalence $\sym{C}\simeq\sym{M}^H$ of $k$-linear monoidal categories.
\item
Because of the form of the long canonical functor, $(\omega
f)(v)=(\id_{\hat V}\otimes f)\circ v$ for $v\in\omega X$, $f\colon
X\to Y$, $X,Y\in|\sym{C}|$, and so the coefficients
$f^{(n)}_{r_1\cdots r_n;p_1\cdots p_n}$ in~\eqref{eq_coefficients}
are zero unless $\tau(r_1)=\tau(p_1)$ and $\sigma(r_n)=\sigma(p_n)$.
\end{myenumerate}
\end{remark}

\begin{definition}
\label{def_schurweyladapted}
Let $\sym{C}$, $M$ and $\sym{G}$ be as in Theorem~\ref{thm_surjection}
and $\sym{E}={(E^{(n)})}_{n\in\N_0}$ be an endomorphism system for
$\sym{C}$ with respect to $M$. The \emph{endomorphism adapted} WBA is
the quotient $H[\sym{G},\sym{E}]:=H[\sym{G}]/I_\sym{E}$ where
$I_\sym{E}$ is the two-sided ideal generated by the relations
\begin{equation}
\label{eq_schurweylrelation}
\sum_{p_1,\ldots,p_n\in\sym{G}^1}{[r_1|p_1]}_1\cdot\cdots\cdot{[r_n|p_n]}_1\,
f^{(n)}_{p_1\cdots p_n;q_1\cdots q_n}
-\sum_{p_1,\ldots,p_n\in\sym{G}^1}f^{(n)}_{r_1\cdots r_n;p_1\cdots p_n}\,
{[p_1|q_1]}_1\cdot\cdots\cdot{[p_n|q_n]}_1
\end{equation}
for all $r_j,q_j\in\sym{G}^1$, $j\in\{1,\ldots,n\}$, $f^{(n)}\in E^{(n)}$ and
$n\in\N_0$.
\end{definition}

Note that all relations in the quotient $H[\sym{G}]/I_\sym{E}$ are
homogeneous, and so $H[\sym{G},\sym{E}]$ is graded,
\cf~Proposition~\ref{prop_hg}(4).

\begin{proposition}
\label{prop_schurweyladapted}
The endomorphism adapted WBA $H[\sym{G},\sym{E}]$ of
Definition~\ref{def_schurweyladapted} forms a WBA.
\end{proposition}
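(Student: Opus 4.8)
The plan is to show that the two-sided ideal $I_\sym{E}$ is in fact a \emph{weak biideal}: in addition to being a two-sided ideal it is a coideal, $\Delta(I_\sym{E})\subseteq I_\sym{E}\otimes H[\sym{G}]+H[\sym{G}]\otimes I_\sym{E}$, and it is annihilated by the counit, $\epsilon(I_\sym{E})=0$. Granting these two facts, the induced maps $\bar\mu,\bar\eta,\bar\Delta,\bar\epsilon$ are all well defined on $H[\sym{G},\sym{E}]=H[\sym{G}]/I_\sym{E}$ (the coideal property is exactly what makes $\bar\Delta=(p\otimes p)\circ\Delta$ independent of the representative, and $\epsilon(I_\sym{E})=0$ does the same for $\bar\epsilon$), the canonical projection $p\colon H[\sym{G}]\to H[\sym{G},\sym{E}]$ intertwines all four structure maps, and since $p$ and $p\otimes p$ are surjective, each of the WBA axioms of Proposition~\ref{prop_defhg} descends from $H[\sym{G}]$ to the quotient. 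It therefore suffices to verify the coideal condition and the vanishing of $\epsilon$.

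For brevity write ${[R|P]}_n={[r_1|p_1]}_1\cdots{[r_n|p_n]}_1$ for $R=(r_1,\ldots,r_n)$ and $P=(p_1,\ldots,p_n)$, which by Proposition~\ref{prop_defhg} equals the earlier ${[R|P]}_n$ when $R,P\in\sym{G}^n$ and vanishes otherwise. The generator \eqref{eq_schurweylrelation} attached to $f^{(n)}\in E^{(n)}$ then reads $x_{RQ}=\sum_{P}{[R|P]}_n\,f^{(n)}_{P;Q}-\sum_{P}f^{(n)}_{R;P}\,{[P|Q]}_n$. Because $\Delta$ is an algebra homomorphism and $I_\sym{E}\otimes H[\sym{G}]+H[\sym{G}]\otimes I_\sym{E}$ is a two-sided ideal of $H[\sym{G}]\otimes H[\sym{G}]$, it is enough to check the coideal condition on the $x_{RQ}$. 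Using $\Delta({[R|P]}_n)=\sum_{S\in\sym{G}^n}{[R|S]}_n\otimes{[S|P]}_n$ and substituting the definitions of $x_{SQ}$ and $x_{RS}$, the two residual double sums cancel after relabelling $S\leftrightarrow P$, yielding
\[
\Delta(x_{RQ})=\sum_{S\in\sym{G}^n}{[R|S]}_n\otimes x_{SQ}+\sum_{S\in\sym{G}^n}x_{RS}\otimes{[S|Q]}_n,
\]
whose first summand lies in $H[\sym{G}]\otimes I_\sym{E}$ and second in $I_\sym{E}\otimes H[\sym{G}]$. This is the key algebraic identity, and it expresses that the fixed scalar matrix $f^{(n)}$ commutes with the comatrix of the comodule $k\sym{G}^n$.

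The main obstacle is the counit, since in a WBA $\epsilon$ is \emph{not} an algebra homomorphism, so $\epsilon(I_\sym{E})=0$ does not follow formally from $\epsilon(x_{RQ})=0$. Instead I would compute $\epsilon$ directly on an arbitrary spanning element $a\,x_{RQ}\,b$ of the two-sided ideal with $a={[\alpha|\alpha']}_i$ and $b={[\beta|\beta']}_j$. Multiplying out and using $\epsilon({[p|q]}_m)=\delta_{pq}$ together with the formula for $\mu$ in Proposition~\ref{prop_defhg}, both terms of $a\,x_{RQ}\,b$ contribute $\delta_{\alpha\alpha'}\delta_{\beta\beta'}\,f^{(n)}_{R;Q}$, but subject to composability constraints on the concatenated paths: $\sigma(\alpha)=\tau(R)$ and $\sigma(R)=\tau(\beta)$ for the first term, and $\sigma(\alpha)=\tau(Q)$ and $\sigma(Q)=\tau(\beta)$ for the second. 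These constraints coincide precisely because, by Remark~\ref{rem_coefficients}(2), the coefficient $f^{(n)}_{R;Q}$ vanishes unless $\tau(R)=\tau(Q)$ and $\sigma(R)=\sigma(Q)$. Hence the two contributions cancel, $\epsilon(a\,x_{RQ}\,b)=0$, and by linearity $\epsilon(I_\sym{E})=0$. Together with the coideal identity above, this shows $I_\sym{E}$ is a weak biideal and completes the proof that $H[\sym{G},\sym{E}]$ is a WBA.
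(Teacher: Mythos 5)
Your proof is correct and follows essentially the same route as the paper, whose own proof merely asserts that $I_\sym{E}$ is a two-sided coideal contained in $\ker\epsilon$ and leaves the verification as a ``direct computation'' after rewriting the generators in the homogeneous form you use. You have in fact supplied the two details the paper suppresses: the cancellation yielding $\Delta(x_{RQ})\in I_\sym{E}\otimes H[\sym{G}]+H[\sym{G}]\otimes I_\sym{E}$, and---since $\epsilon$ is not multiplicative in a WBA---the verification of $\epsilon(a\,x_{RQ}\,b)=0$ on the full two-sided ideal, which correctly hinges on the source/target constraint of Remark~\ref{rem_coefficients}(2).
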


\begin{proof}
We have to show that $I_\sym{E}$ is also a two-sided coideal, \ie\ it
satisfies $\Delta(I_\sym{E})\subseteq I_\sym{E}\otimes H + H\otimes I_\sym{E}$
and $I_\sym{E}\subseteq\ker\epsilon$. This is established in a direct
computation. Note that the relations that generate the ideal $I_\sym{E}$, can
be rewritten as
\begin{eqnarray}
&&\sum_{\ontop{p_1,\ldots,p_n\in\sym{G}^1\colon}{\sigma(p_j)=\tau(p_{j+1})}}
{[r_1\cdots r_n|p_1\cdots p_n]}_nf^{(n)}_{p_1\cdots p_n;q_1\cdots q_n}\nn\\
&-&\sum_{\ontop{p_1,\ldots,p_n\in\sym{G}^1\colon}{\sigma(p_j)=\tau(p_{j+1})}}
f^{(n)}_{r_1\cdots r_n;p_1\cdots p_n}{[p_1\cdots p_n|q_1\cdots q_n]}_n.
\end{eqnarray}
\end{proof}

\begin{proposition}
\label{prop_factorendo}
Under the assumptions of Definition~\ref{def_schurweyladapted}, the surjection
$\pi\colon H[\sym{G}]\to H$ of Theorem~\ref{thm_surjection} factors through
the canonical projection $H[\sym{G}]\to H[\sym{G},\sym{E}]$, giving rise to
another surjection of WBAs $\bar\pi\colon H[\sym{G},\sym{E}]\to H$. This map
$\bar\pi$ also satisfies the properties~(1) to~(4) of
Theorem~\ref{thm_surjection} and in addition
\begin{myitemize}
\item[5.]
The restriction $\bar\pi|_{{H[\sym{G},\sym{E}]}_0\oplus{H[\sym{G},\sym{E}]}_1}\colon
{H[\sym{G},\sym{E}]}_0\oplus{H[\sym{G},\sym{E}]}_1\to H$ is injective.
\end{myitemize}
\end{proposition}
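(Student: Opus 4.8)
The plan is to establish two things: first, that $\pi$ descends to a well-defined WBA homomorphism $\bar\pi$ on the quotient $H[\sym{G},\sym{E}]=H[\sym{G}]/I_\sym{E}$; and second, that the restriction of $\bar\pi$ to degrees $0$ and $1$ is injective. For the first part, by the universal property of the quotient it suffices to show that $\pi$ annihilates the generating relations~\eqref{eq_schurweylrelation} of $I_\sym{E}$. Since $\pi$ is already a WBA homomorphism by Theorem~\ref{thm_surjection}, and since $\Delta(I_\sym{E})\subseteq I_\sym{E}\otimes H+H\otimes I_\sym{E}$ and $I_\sym{E}\subseteq\ker\epsilon$ by Proposition~\ref{prop_schurweyladapted}, the induced map $\bar\pi$ will automatically be a WBA homomorphism once $I_\sym{E}\subseteq\ker\pi$ is verified. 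Properties~(1)--(4) then transfer immediately to $\bar\pi$ because $\bar\pi$ composed with the canonical projection equals $\pi$, and these properties concern only the images of degree-$0$ and degree-$1$ elements, which are unaffected by passing to the quotient.

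To see that $\pi$ kills the relations, I would evaluate $\pi$ on a length-$n$ product ${[r_1|p_1]}_1\cdots{[r_n|p_n]}_1$. Using Parts~(1) and~(2) of Theorem~\ref{thm_surjection} together with the multiplication formula in $H$ from Theorem~\ref{thm_tk}, such a product maps to (a scalar multiple of) ${[e^{r_1\cdots r_n}_{(M^{\otimes n})}|e_{p_1\cdots p_n}^{(M^{\otimes n})}]}_{M^{\otimes n}}$ in the coefficient coalgebra of ${(\omega M)}^{\hotimes n}$. The key observation is that the coefficients $f^{(n)}_{r_1\cdots r_n;p_1\cdots p_n}$ are precisely the matrix entries of $\omega f^{(n)}$ as in~\eqref{eq_coefficients}, and that in $H$ the defining relation ${[\theta|(\omega f)(v)]}={[(\omega f)^\ast(\theta)|v]}$ from Theorem~\ref{thm_tk} holds for every endomorphism $f\in\End(M^{\otimes n})$. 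Applying this relation with $f=f^{(n)}$ converts the first sum in~\eqref{eq_schurweylrelation} into the second, so their difference maps to zero under $\pi$. This is essentially a bookkeeping computation identifying the two sums with $[\theta|(\omega f^{(n)})(v)]$ and $[(\omega f^{(n)})^\ast(\theta)|v]$ respectively.

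The substantive part is Property~(5), injectivity of $\bar\pi$ in degrees $0$ and $1$. I would argue as follows. Since all relations generating $I_\sym{E}$ are homogeneous of degree $n\geq 0$ and each relation in degree $n$ lies in the span of products of exactly $n$ generators, the ideal $I_\sym{E}$ respects the grading, and its degree-$0$ and degree-$1$ components are controlled by the relations with $n=0$ and $n=1$. The crucial point is that for an endomorphism system in the sense of Definition~\ref{def_schurweylsystem}, the conditions are stated purely in terms of \emph{generation} of $\End(M^{\otimes n})$, and the relations at level $n$ express coincidences already present in $H$; one must check that these impose no new collapse below degree $2$. Concretely, ${H[\sym{G},\sym{E}]}_0\oplus{H[\sym{G},\sym{E}]}_1$ maps to $C(\one)\oplus C(M)$, and I would show the relations of degree $0$ and $1$ (coming from $E^{(0)}$ and $E^{(1)}$) match exactly the defining relations $N_\one$ and $N_M$ of the coefficient coalgebras $C(\one)$ and $C(M)$ inside $H$. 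Because $\omega$ is faithful and $E^{(0)},E^{(1)}$ generate $\End(\one),\End(M)$ together with the identity, the quotients agree, so $\bar\pi$ restricts to an isomorphism onto $C(\one)\oplus C(M)$.

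The main obstacle I anticipate is Property~(5): one must verify that the low-degree components of the ideal $I_\sym{E}$ capture \emph{precisely} the relations defining the coefficient coalgebras $C(\one)$ and $C(M)$, and in particular that no \emph{additional} identifications are forced in degrees $0$ and $1$ by higher-degree relations in $\sym{E}$. Controlling that higher relations do not leak into low degrees relies on the homogeneity established in Proposition~\ref{prop_schurweyladapted}, and matching the surviving low-degree relations to $N_\one$ and $N_M$ requires carefully unwinding the definition of an endomorphism system together with the faithfulness of $\omega$. This is where I would spend the bulk of the proof.
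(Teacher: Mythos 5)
Your factorization argument is correct and is essentially the paper's: you verify $\pi(I_{\sym{E}})=0$ by identifying the two sums in~\eqref{eq_schurweylrelation} with ${[\theta|(\omega f^{(n)})(v)]}$ and ${[{(\omega f^{(n)})}^\ast(\theta)|v]}$ and invoking the coend relations of Theorem~\ref{thm_tk}; the paper packages the same computation as the statement that~\eqref{eq_schurweylrelation} holds in a quotient if and only if the linear map with coefficients $f^{(n)}_{r_1\cdots r_n;p_1\cdots p_n}$ is a morphism of right comodules, which holds in $H$ because $\pi$ pushes forward $k\sym{G}^0$ to $\omega\one$ and $k\sym{G}^1$ to $\omega M$ and $f^{(n)}$ is a morphism of $\sym{C}\simeq\sym{M}^H$. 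Your transfer of properties~(1)--(4) is also fine. Note, however, that you locate the difficulty of Property~(5) in the wrong place: leakage of higher-degree relations into degrees $0$ and $1$ is impossible for trivial reasons --- the generators of $I_\sym{E}$ are homogeneous and degrees add under multiplication, so generators of degree $n\geq 2$ contribute nothing below degree $2$. Conversely, the degree-$1$ component of the two-sided ideal contains not only the $E^{(1)}$-relations but also products of degree-$0$ generators with degree-$1$ elements of $H[\sym{G}]$; this is exactly how the endomorphisms $f^{(0)}\otimes\id_M$ required by Definition~\ref{def_schurweylsystem}(2) get implemented, and you need them to capture all of $N_M$.

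The genuine gap is at the end: you conclude that $\bar\pi$ restricts to an isomorphism onto $C(\one)\oplus C(M)$, but you never justify that the sum $C(\one)+C(M)$ inside $H$ is direct, i.e.\ that $C(\one)\cap C(M)=\{0\}$. Your argument only shows that each homogeneous component ${H[\sym{G},\sym{E}]}_0$ and ${H[\sym{G},\sym{E}]}_1$ separately maps injectively, onto $C(\one)$ and $C(M)$ respectively; injectivity on their direct sum is an additional claim, and it is false without further hypotheses: if $\one$ and $M$ shared a simple summand $V_j$, then $C(V_j)\subseteq C(\one)\cap C(M)$ and the restriction of $\bar\pi$ to ${H[\sym{G},\sym{E}]}_0\oplus{H[\sym{G},\sym{E}]}_1$ could not be injective. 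This is precisely where the paper's (very short) proof of Property~(5) invokes the conditions of Definition~\ref{def_generates}: $M$ is multiplicity-free, $\one$ is multiplicity-free by~\cite{EtNi05}, and $\one$ and $M$ have pairwise non-isomorphic direct summands, so that in the split cosemisimple coalgebra $H=\bigoplus_{j\in I}{(\omega V_j)}^\ast\otimes\omega V_j$ the coefficient coalgebras $C(\one)$ and $C(M)$ are supported on disjoint sets of simple blocks and therefore intersect trivially. Adding this observation closes the gap; the rest of your matching of low-degree relations to $N_\one$ and $N_M$ is the same computation you already carried out for the factorization step.
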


\begin{proof}
In any quotient of $H[\sym{G}]$, the relation~\eqref{eq_schurweylrelation} holds
for a particular $f^{(n)}\in E^{(n)}$, $n\in\N_0$, if and only if the linear
map
\begin{equation}
{(k\sym{G}^1)}^{\hotimes n}\to {(k\sym{G}^1)}^{\hotimes n},\qquad
p_1\otimes\cdots\otimes p_n\mapsto
\sum_{r_1,\ldots,r_n\in\sym{G}^1}r_1\otimes\cdots\otimes r_n\,
f^{(n)}_{r_1\cdots r_n;p_1\cdots p_n}
\end{equation}
forms a morphism of right comodules. This is established in a direct
calculation. The claim holds because $\pi$ pushes forward $k\sym{G}^0$ to
$\omega\one$ and $k\sym{G}^1$ to $\omega M$, and because $\pi$ is a morphism
of WBAs and therefore preserves tensor products. For Property~(5), we recall
that $M$ satisfies all conditions of Definition~\ref{def_generates} and that
the direct summands of $\one$ are pairwise non-isomorphic~\cite{EtNi05}.
\end{proof}

The special case of the endomorphism system of Example~\ref{ex_typea} is
particularly interesting because in this case, the ideal $I_\sym{E}$ is
generated by quadratic relations only.

\begin{definition}
\label{def_weakfrt}
Let $\sym{C}$, $M$ and $\sym{G}$ be as in Theorem~\ref{thm_surjection}
and the endomorphism system $\sym{E}={(E^{(n)})}_{n\in\Z_{\geq 0}}$ be
as in Example~\ref{ex_typea}. We denote the coefficients of the
braiding under the long canonical functor by $R_{r_1r_2;p_1p_2}$, \ie
\begin{equation}
\omega(\psi_{M,M})(p_1\otimes p_2)
= \sum_{r_1,r_2\in\sym{G}^1} r_1\otimes r_2\,R_{r_1r_2;p_1p_2},
\end{equation}
for all $p_1,p_2\in\sym{G}^1$. The \emph{Weak
Faddeev--Reshetikhin--Takhtajan} (FRT) \emph{Bialgebra} is the quotient
$H[\sym{G},R]:=H[\sym{G}]/I_R$ where $I_R$ is the two-sided ideal generated by
the relations
\begin{equation}
\label{eq_frtrelation}
\sum_{p_1,p_2\in\sym{G}^1}{[r_1|p_1]}_1\cdot{[r_2|p_2]}_1\,R_{p_1p_2;q_1q_2}
- \sum_{p_1,p_2\in\sym{G}^1}R_{r_1r_2;p_1p_2}\,{[p_1|q_1]}_1\cdot{[p_2|q_2]}_1,
\end{equation}
for all $r_1,r_2,q_1,q_2\in\sym{G}^1$.
\end{definition}

Note that the relations can again be written in a slightly different fashion:
\begin{equation}
\sum_{\ontop{p_1,p_2\in\sym{G}^1\colon}{\sigma(p_1)=\tau(p_2)}}
{[r_1r_2|p_1p_2]}_2\,R_{p_1p_2;q_1q_2}
- \sum_{\ontop{p_1,p_2\in\sym{G}^1\colon}{\sigma(p_1)=\tau(p_2)}}
R_{r_1r_2;p_1p_2}\,{[p_1p_2|q_1q_2]}_2.
\end{equation}
In complete analogy to Proposition~\ref{prop_schurweyladapted}, the
Weak FRT Bialgebra $H[\sym{G},R]$ forms a WBA. We call it the Weak FRT
Bialgebra because our quotient generalizes the construction of
Faddeev--Reshetikhin--Takhtajan~\cite{ReTa90} to finite-dimensional
split cosemisimple WBAs. This is the situation in which Hayashi
describes WHAs whose categories of finite-dimensional comodules have
the same fusion rules as the modular categories associated with
$U_q(\mathfrak{sl}_N)$ at suitable roots of unity~\cite{Ha99a}. Since
$R$ consists of the coefficients of the braiding, $R$ satisfies a
generalization of the Quantum Yang--Baxter equation to truncated
tensor products. Such an $R$-matrix is known in the physics literature
as the Boltzmann weight of a star-triangular face model. $R$-matrices
of this type were studied, for example, in~\cite{JiMi88}.

\begin{example}
If $\sym{C}$ and its endomorphism system $\sym{E}={(E^{(n)})}_{n\in\N_0}$ are as
in Example~\ref{ex_typea}, the endomorphism adapted WBA coincides with the
Weak FRT Bialgebra.
\end{example}

\begin{proof}
We have to show that the quadratic relations that define $I_R$
in~\eqref{eq_frtrelation} already generate the entire two-sided ideal
$I_\sym{E}$ of Definition~\ref{def_schurweyladapted}. Recall that
$E^{(0)}=\emptyset=E^{(1)}$ in this example. First, since $R$ is the
braiding, each relation~\eqref{eq_frtrelation} for $R$ implies another
relation of the same form with $R^{-1}$ rather than $R$. Just multiply
the relation~\eqref{eq_frtrelation} with $R^{-1}$ from the right and
from the left. Second, in order to show that all
elements~\eqref{eq_schurweylrelation} of degree greater than two are
already contained in the two-sided ideal generated by $I_R$, it is
sufficient to verify that all endomorphisms of the form $\omega
f=\id_{{(\omega M)}^{\hotimes(n-2)}}\otimes\omega(\sigma^\pm_{M,M})$,
$f\in E^{(n)}$, $n\geq 3$, are already implemented by the quotient
modulo $I_R$. This is done in a direct computation.
\end{proof}

Even in situations in which $\sym{C}$ is braided and the braiding and
its inverse do not generate all endomorphisms of the tensor powers of
$\omega M$, the Weak FRT Bialgebra is worth studying in more
detail. Firstly, the Weak FRT Bialgebra is equipped with a
coquasi-triangular structure and, secondly, the endomorphism adapted
WBA is a quotient of the Weak FRT Bialgebra.

\begin{proposition}
\label{prop_coquasi}
Let $\sym{C}$, $M$ and $\sym{G}$ be as in Theorem~\ref{thm_surjection}
and assume in addition that $\sym{C}$ is braided. Then the WBA
$H[\sym{G},R]$ of Definition~\ref{def_weakfrt} is coquasi-triangular
with universal $r$-form $r\colon H[\sym{G},R]\otimes H[\sym{G},R]\to
k$ given by
\begin{eqnarray}
r({[u|v]}_0\otimes{[w|x]}_0) &=& \delta_{u,v}\delta_{v,x}\delta_{u,w},\\
r({[u|v]}_0\otimes{[p|q]}_1)
&=& \delta_{u,\tau(p)}\delta_{v,\sigma(p)}\delta_{p,q},\\
r({[p|q]}_0\otimes{[u|v]}_0)
&=& \delta_{\sigma(p),u}\delta_{\tau(p),v}\delta_{p,q},\\
r({[p|q]}_1\otimes{[r|s]}_1)
&=& \left\{\begin{matrix}
R_{pr,sq}&\mbox{if}\quad\begin{matrix}\tau(p)=\tau(s),\sigma(r)=\sigma(q),\\\sigma(s)=\tau(q),\sigma(p)=\tau(r),\end{matrix}\\
0&\mbox{otherwise}
\end{matrix}\right.
\end{eqnarray}
and further, inductively, by
\begin{equation}
r({[p|q]}_m\otimes{[r|s]}_{\ell+1})
= \sum_{t\in\sym{G}^m} r({[p|t]}_m\otimes{[r_1|s_1]}_\ell)
r({[t|q]}_m\otimes{[r_2|s_2]}_1),
\end{equation}
where $r=r_1r_2$, $s=s_1s_2$ with $r_1,s_1\in\sym{G}^\ell$ and
$r_2,s_2\in\sym{G}^1$; and by
\begin{equation}
r({[p|q]}_{m+1}\otimes{[r|s]}_\ell)
= \sum_{t\in\sym{G}^\ell} r({[p_2|q_2]}_1\otimes{[r|t]}_\ell)
r({[p_1|q_1]}_m\otimes{[t|s]}_\ell),
\end{equation}
where $p=p_1p_2$ and $q=q_1q_2$ with $p_1,q_1\in\sym{G}^m$ and
$p_2,q_2\in\sym{G}^1$. The map $\pi_R=\bar\pi\circ\pi^\prime\colon
H[\sym{G},\sym{R}]\to H$ is a surjective homomorphism of coquasi-triangular
WBAs.
\end{proposition}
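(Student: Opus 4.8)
The plan is to define the form first on the free object $H[\sym{G}]$, verify that it descends to the quotient $H[\sym{G},R]=H[\sym{G}]/I_R$, and only then check the axioms of Definition~\ref{def_coquasi}. The explicit values on the homogeneous components ${H[\sym{G}]}_0\otimes{H[\sym{G}]}_0$, ${H[\sym{G}]}_0\otimes{H[\sym{G}]}_1$, ${H[\sym{G}]}_1\otimes{H[\sym{G}]}_0$ and ${H[\sym{G}]}_1\otimes{H[\sym{G}]}_1$ serve as base cases, and the two inductive prescriptions---one building up the right-hand factor and one the left-hand factor---extend $r$ to all bidegrees. First I would check that these two prescriptions agree on mixed-degree elements, so that $r$ is unambiguously defined; this is a bookkeeping induction in which the composability constraints recorded in Remark~\ref{rem_coefficients} guarantee that the Kronecker-delta conditions in the degree-$(1,1)$ formula are compatible with the graph $\sym{G}$.

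The decisive point for well-definedness is that $r$ must annihilate $I_R$ in each tensor slot. Pairing the generating relation~\eqref{eq_frtrelation} of $I_R$ against a degree-one element in the opposite slot and expanding by~\eqref{eq_univr1} and~\eqref{eq_univr2} reduces, after inserting the degree-$(1,1)$ values (which are the entries of $R$), to the braid (Yang--Baxter) relation for $R$. This relation holds because, by Definition~\ref{def_weakfrt}, $R$ consists of the coefficients of the genuine braiding $\psi_{M,M}$ under the long canonical functor, which satisfies the hexagon axioms. Pairing against higher-degree elements then follows from the same multiplicativity formulas by induction, so $r$ descends to $H[\sym{G},R]$.

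Next I would verify the three families of axioms of Definition~\ref{def_coquasi}. The multiplicativity identities~\eqref{eq_univr1} and~\eqref{eq_univr2} hold by construction, being precisely the inductive definition of $r$. The quasi-cocommutativity identity $x^\prime y^\prime r(x^\pprime\otimes y^\pprime)=r(x^\prime\otimes y^\prime)y^\pprime x^\pprime$ is the crux: evaluating both sides on degree-one generators, using $\Delta({[p|q]}_1)=\sum_s{[p|s]}_1\otimes{[s|q]}_1$ and the degree-$(1,1)$ value of $r$, turns it, after relabeling, into exactly the relation $RTT=TTR$ of~\eqref{eq_frtrelation} that we have quotiented out; it then propagates to all degrees by an induction invoking the WBA compatibility of $\mu$ with $\Delta$ together with the already-established multiplicativity of $r$. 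The counital condition~(1) is checked on the explicit degree-$\le 1$ values and carried upward by the same induction. For condition~(2) I would set $\bar r$ to be given by the same formulas with $R$ replaced by the coefficients $\bar R$ of the inverse braiding $\psi^{-1}_{M,M}$; the convolution identities $\bar r\ast r=\epsilon=r\ast\bar r$ then reduce on generators to $\bar R R=\id=R\bar R$, \ie\ to $\psi_{M,M}$ and $\psi^{-1}_{M,M}$ being mutually inverse, and extend inductively.

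Finally, for the claim that $\pi_R=\bar\pi\circ\pi^\prime$ is a morphism of coquasi-triangular WBAs, I would use that $\pi_R$ is already a surjective homomorphism of WBAs (Proposition~\ref{prop_factorendo} composed with the canonical projection $\pi^\prime$) and that $H=\coend(\sym{C},\omega)$ is coquasi-triangular because $\sym{C}$ is braided (the remark following Theorem~\ref{thm_tk}), with $r$-form $r_H$ again given by the coefficients of the braiding on $\omega M\otimes\omega M$. It then suffices to verify $r=r_H\circ(\pi_R\otimes\pi_R)$ on the degree-$0$ and degree-$1$ generators, where both sides are the braiding coefficients essentially by Theorem~\ref{thm_surjection}(1)--(2); multiplicativity of both forms propagates the identity to all of $H[\sym{G},R]$. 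The main obstacle throughout is the interplay, in the \emph{weak} setting, between the defining relations $RTT=TTR$ and the Yang--Baxter relation for $R$: unlike the classical FRT case, every computation must track the source/target composability conditions imposed by $\sym{G}$ (\ie\ the truncation) and the non-trivial role of the base algebras $H_s,H_t$, which is where all the genuine bookkeeping difficulty lies.
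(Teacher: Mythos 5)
Your proposal is correct and matches the paper's approach: the paper's own proof is literally ``Direct computation,'' and your outline (base cases in degrees $\le 1$, the two inductive extensions with a consistency check between them, descent to the quotient by $I_R$ via the star-triangle relation that $R$ inherits from the braiding, quasi-cocommutativity on generators being exactly the $RTT=TTR$ relation~\eqref{eq_frtrelation}, $\bar r$ built from the inverse braiding, and transport along $\pi_R$ by checking $r=r_H\circ(\pi_R\otimes\pi_R)$ on generators) is precisely that computation spelled out. One minor point to watch: the weak convolution inverse $\bar r$ obeys the multiplicativity laws with the Sweedler legs in the \emph{opposite} order to~\eqref{eq_univr1}--\eqref{eq_univr2}, so ``the same formulas with $\bar R$'' requires that reversal, and $\bar R R$ is the identity only on the truncated product $k\sym{G}^1\hotimes k\sym{G}^1$ (\ie\ it equals the idempotent $P_{k\sym{G}^1,k\sym{G}^1}$ on $k\sym{G}^1\otimes k\sym{G}^1$), which is exactly what the weak conditions $\bar r(x^\prime\otimes y^\prime)r(x^\pprime\otimes y^\pprime)=\epsilon(yx)$ and $r(x^\prime\otimes y^\prime)\bar r(x^\pprime\otimes y^\pprime)=\epsilon(xy)$ demand.
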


\begin{proof}
Direct computation.
\end{proof}

\begin{corollary}
\label{cor_coquasi}
Let $\sym{C}$, $M$ and $\sym{G}$ be as in Theorem~\ref{thm_surjection}
and assume in addition that $\sym{C}$ is braided. Then there is a
surjection of coquasi-triangular WBAs $\pi^\prime\colon
H[\sym{G},R]\to H[\sym{G},\sym{E}]$ and the \emph{Boltzmann weight}
\begin{equation}
R\colon k\sym{G}^1\hotimes k\sym{G}^1\to k\sym{G}^1\hotimes k\sym{G}^1,\qquad
p_1\otimes p_2\mapsto\sum_{r_1,r_2\in\sym{G}^1}r_1\otimes r_2\,R_{r_1r_2;p_1p_2},
\end{equation}
is \emph{star-triangular}, \ie\
\begin{equation}
R_1\circ R_2\circ R_1 = R_2\circ R_1\circ R_2,
\end{equation}
where $R_1=R\otimes\id_{\omega M}$ and $R_2=\id_{\omega M}\otimes R$.
\end{corollary}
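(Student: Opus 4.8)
The plan is to establish the corollary in two parts, treating the surjection $\pi^\prime$ first and the star-triangular relation second. For the surjection, since both $H[\sym{G},R]=H[\sym{G}]/I_R$ and $H[\sym{G},\sym{E}]=H[\sym{G}]/I_\sym{E}$ are quotients of $H[\sym{G}]$, it suffices to prove the ideal inclusion $I_R\subseteq I_\sym{E}$; the map $\pi^\prime$ is then the induced quotient map, automatically a surjective homomorphism of WBAs. To obtain $I_R\subseteq I_\sym{E}$, I would invoke the characterization in Proposition~\ref{prop_factorendo}: in any quotient of $H[\sym{G}]$, the relation associated with a comodule endomorphism holds precisely when that endomorphism descends to a morphism of right comodules. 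By construction, in $H[\sym{G},\sym{E}]$ every $\omega f^{(n)}$ with $f^{(n)}\in E^{(n)}$ is such a morphism. Now $\psi_{M,M}\in\End(M^{\otimes 2})$, and by the endomorphism-system axiom (Definition~\ref{def_schurweylsystem}(2) with $n=2$) the algebra $\End(M^{\otimes 2})$ is generated by $(E^{(1)}\otimes\id_M)\cup E^{(2)}$. Since morphisms of right comodules are closed under composition, linear combination, and tensoring with identity morphisms, the image $R=\omega(\psi_{M,M})$ is again a morphism of right comodules in $H[\sym{G},\sym{E}]$. By the same characterization, the relation~\eqref{eq_frtrelation} for $R$ holds in $H[\sym{G},\sym{E}]$, giving $I_R\subseteq I_\sym{E}$ and hence $\pi^\prime$.

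That $\pi^\prime$ is a homomorphism of coquasi-triangular WBAs follows from Proposition~\ref{prop_coquasi}, where the composite $\pi_R=\bar\pi\circ\pi^\prime\colon H[\sym{G},R]\to H$ is shown to be such a homomorphism, i.e.\ $r_R=r_H\circ(\pi_R\otimes\pi_R)$, with $r_H$ the universal $r$-form of the coquasi-triangular $H$ (which exists because $\sym{C}$ is braided, \cf~the remark after Theorem~\ref{thm_tk}). Equipping $H[\sym{G},\sym{E}]$ with the pulled-back form $r_\sym{E}:=r_H\circ(\bar\pi\otimes\bar\pi)$ makes $\bar\pi$ coquasi-triangular, since the axioms of Definition~\ref{def_coquasi} are phrased only through $\mu,\Delta,\epsilon$, all respected by $\bar\pi$. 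The factorization $r_R=r_\sym{E}\circ(\pi^\prime\otimes\pi^\prime)$ then exhibits $\pi^\prime$ as a coquasi-triangular homomorphism as well.

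For the star-triangular relation, I would transport the braid relation of $\psi$ through the long canonical functor. In the braided category $\sym{C}$, naturality of $\psi$ together with the hexagon axioms yields the Yang--Baxter identity in $\End(M^{\otimes 3})$,
\begin{equation*}
(\psi_{M,M}\otimes\id_M)\circ(\id_M\otimes\psi_{M,M})\circ(\psi_{M,M}\otimes\id_M)
=(\id_M\otimes\psi_{M,M})\circ(\psi_{M,M}\otimes\id_M)\circ(\id_M\otimes\psi_{M,M}).
\end{equation*}
Under the equivalence $\sym{C}\simeq\sym{M}^H$ of braided monoidal categories, $\omega$ becomes the forgetful functor, which identifies the truncated triple product $k\sym{G}^1\hotimes k\sym{G}^1\hotimes k\sym{G}^1$ with $\omega(M^{\otimes 3})$ and sends $\psi_{M,M}$ to $R$. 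On this truncated product one has $\omega(\psi_{M,M}\otimes\id_M)=R_1$ and $\omega(\id_M\otimes\psi_{M,M})=R_2$, so applying the faithful, composition-preserving functor $\omega$ to the identity above gives exactly $R_1\circ R_2\circ R_1=R_2\circ R_1\circ R_2$.

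The main obstacle is this transport step: because $\omega$ carries only a separable Frobenius structure and is not strong monoidal, the identifications $\omega(\psi_{M,M}\otimes\id_M)=R_1$ and $\omega(\id_M\otimes\psi_{M,M})=R_2$ are not formal but require that, after passing to the image of the idempotents~\eqref{eq_idempotent}, the lax and oplax structure maps $\omega_{X,Y}$ and $\omega^{X,Y}$ reduce to identities. This is precisely what $\sym{C}\simeq\sym{M}^H$ supplies, since in $\sym{M}^H$ the product $\hotimes$ \emph{is} the truncated one and the forgetful functor respects it strictly; checking that the conjugations by these structure maps cancel on the truncated tensor product is the one genuinely computational point.
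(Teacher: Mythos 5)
Your first and third steps are sound and essentially reproduce the paper's own (very terse) argument: $I_R\subseteq I_\sym{E}$ holds because $\psi_{M,M}$ lies in $\End(M^{\otimes 2})$, which by Definition~\ref{def_schurweylsystem}(2) is generated by $(E^{(1)}\otimes\id_M)\cup E^{(2)}$, and colinear maps are closed under the relevant operations --- this is exactly the paper's remark that $B_n\leq\End(M^{\otimes n})$ is always a subalgebra; and star-triangularity holds because $R$ is the matrix of the braiding $\omega(\psi_{M,M})$ on the truncated tensor product, so the braid relation in $\End(M^{\otimes 3})\cong\End_{\sym{M}^H}({(\omega M)}^{\hotimes 3})$ transports directly, which is all the paper says as well. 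Your closing caveat about the lax/oplax structure maps is handled by the braided monoidal equivalence $\sym{C}\simeq\sym{M}^H$, as you indicate.

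The genuine gap is in your middle step. You equip $H[\sym{G},\sym{E}]$ with $r_\sym{E}:=r_H\circ(\bar\pi\otimes\bar\pi)$ and assert the axioms of Definition~\ref{def_coquasi} transfer because they are ``phrased only through $\mu,\Delta,\epsilon$, all respected by $\bar\pi$''. That reasoning is valid only for the scalar-valued axioms (1) and (2). The first condition in axiom (3), namely $x^\prime y^\prime\,r(x^\pprime\otimes y^\pprime)=r(x^\prime\otimes y^\prime)\,y^\pprime x^\pprime$, is an equation between \emph{elements} of $H[\sym{G},\sym{E}]$. Pulling back along $\bar\pi$ only shows that the two sides have equal image under $\bar\pi$; since $\ker\bar\pi$ is nontrivial in general (in the $U_q(\ssl_2)$ example of Section~\ref{sect_example} it is generated by $1-g_2$), this does not yield the identity in $H[\sym{G},\sym{E}]$ itself. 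Element-valued identities push forward along surjective homomorphisms of WBAs; they do not pull back along them. The correct direction is the paper's: the universal $r$-form of $H[\sym{G},R]$ constructed in Proposition~\ref{prop_coquasi} descends to the quotient $H[\sym{G},\sym{E}]=H[\sym{G},R]/(I_\sym{E}/I_R)$, which requires checking $r(I_\sym{E}\otimes-)=0=r(-\otimes I_\sym{E})$ and likewise for $\bar r$; on the generators~\eqref{eq_schurweylrelation} this vanishing is precisely the naturality of the braiding with respect to the endomorphisms $f^{(n)}$ (the paper's ``the braiding is, of course, a morphism''), and it propagates to the two-sided ideal via~\eqref{eq_univr1} and~\eqref{eq_univr2} --- compare the entirely analogous descent to $H[\sym{G},\sym{E}]/I_G$ carried out in Section~\ref{sect_dividegplike}. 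Once the form is descended, $\pi^\prime$ is automatically a homomorphism of coquasi-triangular WBAs, and by surjectivity of $\pi^\prime$ the descended form coincides with your $r_H\circ(\bar\pi\otimes\bar\pi)$; so the form you wrote down is the right one, but your route to verifying its axioms breaks down at exactly the element-valued condition.
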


\begin{proof}
Since for all $n\geq 2$, $B_n\leq\End(M^{\otimes n})$ always forms a
subalgebra, we have $I_R\subseteq I_\sym{E}$. A compatible
coquasi-triangular structure on $H[\sym{G},\sym{E}]$ always exists
because the braiding is, of course, a morphism. The Boltzmann weight
is star triangular because $R$ coincides with the coefficients of the
braiding $\omega(\psi_{M,M})\colon \omega M\hotimes\omega M\to\omega
M\hotimes\omega M$.
\end{proof}

\subsection{Comparing the categories of comodules}

In this section, we compare the categories $\sym{M}^{H[\sym{G},\sym{E}]}$ and
$\sym{M}^H$ of finite-dimensional comodules of the endomorphism adapted WBA
$H[\sym{G},\sym{E}]$ and of the universal coend $H=\coend(\sym{C},\omega)$.

\begin{proposition}
\label{prop_schurweyl}
Let $\sym{C}$, $M$ and $\sym{G}$ be as in Theorem~\ref{thm_surjection}
and $\sym{E}={(E^{(n)})}_{n\in\N_0}$ be an endomorphism system for
$\sym{C}$ with respect to $M$.
\begin{myenumerate}
\item
The vector space ${(\omega M)}^{\hotimes n}$, $n\in\N_0$, is both an
$H[\sym{G},\sym{E}]$- and an $H$-comodule. The homomorphism of WBAs
$\bar\pi\colon H[\sym{G},\sym{E}]\to H$ pushes forward the
$H[\sym{G},\sym{E}$]-comodule structure to the $H$-comodule structure, \ie
\begin{equation}
(\id_{{(\omega M)}^{\hotimes n}}\otimes\bar\pi)
\circ\beta^{(H[\sym{G},\sym{E}])}_{{(\omega M)}^{\hotimes n}}
= \beta^{(H)}_{{(\omega M)}^{\hotimes n}}.
\end{equation}
\item
A linear map $f\colon{(\omega M)}^{\hotimes n}\to{(\omega M)}^{\hotimes
n}$, $n\in\N_0$, is $H$-colinear if and only if it is
$H[\sym{G},\sym{E}]$-colinear, \ie\
\begin{equation}
\End_{\sym{M}^{H[\sym{G},\sym{E}]}}({(\omega M)}^{\hotimes n})
= \End_{\sym{M}^H}({(\omega M)}^{\hotimes n}).
\end{equation}
\item
A linear subspace $V\subseteq{(\omega M)}^{\hotimes n}$, $n\in\N_0$, is an
$H[\sym{G},\sym{E}]$-subcomodule if and only if it is an $H$-subcomodule.
\item
Let $V\subseteq{(\omega M)}^{\hotimes n}$ and $W\subseteq{(\omega
M)}^{\hotimes m}$, $n,m\in\N_0$, be right
$H[\sym{G},\sym{E}]$-subcomodules. If $n\neq m$, then $V\not\cong W$
are not isomorphic as $H[\sym{G},\sym{E}]$-comodules. If $n=m$, then
$V$ and $W$ are isomorphic as $H[\sym{G},\sym{E}]$-comodules if and
only if they are isomorphic as $H$-comodules.
\item
$H[\sym{G},\sym{E}]$ is split cosemisimple. The isomorphism classes
of its simple comodules can be represented by some
$V^{(m)}_j\in|\sym{M}^{H[\sym{G},\sym{E}]}|$ where $m\in\N_0$ and
$j\in\{1,\ldots,\ell_m\}$, $\ell_m\in\N$. Here, $j$ labels the
isomorphism classes of simple $H$-comodules in the complete
decomposition of ${(\omega M)}^{\hotimes m}$ as an
$H$-comodule. Furthermore, $\bar\pi$ pushes forward each $V^{(m)}_j$
to that particular isomorphism type of $H$-comodules.
\end{myenumerate}
\end{proposition}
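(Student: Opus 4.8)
The plan is to establish the five parts in sequence, using throughout that $\bar\pi\colon H[\sym{G},\sym{E}]\to H$ is a surjection of WBAs enjoying properties~(1)--(5) of Proposition~\ref{prop_factorendo}. For Part~1 I would start from the fact that $\bar\pi$ pushes forward the comodule $k\sym{G}^1=\omega M$ to $\omega M$ over $H$; since $\bar\pi$ is a homomorphism of WBAs it preserves truncated tensor products, so it pushes forward $(k\sym{G}^1)^{\hotimes n}$ to $(\omega M)^{\hotimes n}$ for every $n$. The single point requiring care is that the underlying vector spaces coincide: the truncation idempotent of~\eqref{eq_idempotent} depends only on $\epsilon(v_1w_1)$, and because $\bar\pi$ is multiplicative and satisfies $\epsilon_H\circ\bar\pi=\epsilon$, the idempotents for $H[\sym{G},\sym{E}]$ and for $H$ agree on $\omega M\otimes\omega M$; iterating (a routine induction on $n$) shows the $n$-fold truncated products coincide as subspaces of $(\omega M)^{\otimes n}$, whence the push-forward identity.

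Part~2 is the heart of the matter. The inclusion $\End_{\sym{M}^{H[\sym{G},\sym{E}]}}\subseteq\End_{\sym{M}^H}$ is immediate: applying $(\id\otimes\bar\pi)$ to the colinearity condition and invoking Part~1 turns any $H[\sym{G},\sym{E}]$-colinear endomorphism into an $H$-colinear one. For the reverse inclusion I would exploit the endomorphism system. By Remark~\ref{rem_coefficients}(1) the algebra $\End_{\sym{M}^H}({(\omega M)}^{\hotimes n})\cong\End(M^{\otimes n})$ is generated, as in Definition~\ref{def_schurweylsystem}, by $\omega(E^{(n-1)}\otimes\id_M)\cup\omega(E^{(n)})$. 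Each generator $\omega f^{(n)}$, $f^{(n)}\in E^{(n)}$, is $H[\sym{G},\sym{E}]$-colinear precisely because relation~\eqref{eq_schurweylrelation} has been imposed, this equivalence being exactly the calculation in the proof of Proposition~\ref{prop_factorendo}. For the lower-degree generators I use that under the Frobenius structure of $\omega$ one has $\omega(f^{(n-1)}\otimes\id_M)=\omega f^{(n-1)}\hotimes\id_{\omega M}$, where $\omega f^{(n-1)}$ is colinear by the degree-$(n-1)$ relations already present in $I_\sym{E}$, and the truncated tensor product of colinear maps is again colinear. Since the $H[\sym{G},\sym{E}]$-colinear endomorphisms form a subalgebra containing a generating set of $\End_{\sym{M}^H}({(\omega M)}^{\hotimes n})$, the two endomorphism algebras coincide.

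Parts~3 and~4 then follow by passing to idempotents. Since $H$ is cosemisimple (Theorem~\ref{thm_tk}), the comodule ${(\omega M)}^{\hotimes n}$ is semisimple, so every $H$-subcomodule $V$ is the image of an idempotent $e\in\End_{\sym{M}^H}({(\omega M)}^{\hotimes n})$; by Part~2 this $e$ is also $H[\sym{G},\sym{E}]$-colinear, hence $V=\im e$ is an $H[\sym{G},\sym{E}]$-subcomodule, while the converse direction is the easy push-forward, giving Part~3. For the $n=m$ case of Part~4, writing $V=\im e_V$ and $W=\im e_W$ identifies $\Hom(V,W)$ with the corner $e_W\,\End({(\omega M)}^{\hotimes n})\,e_V$, which is the same over $H$ and over $H[\sym{G},\sym{E}]$ by Part~2, so $V\cong W$ over one WBA iff over the other. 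For $n\neq m$ I would use that the relations generating $I_\sym{E}$ are homogeneous, so $H[\sym{G},\sym{E}]=\bigoplus_m{H[\sym{G},\sym{E}]}_m$ is a direct sum of subcoalgebras (each ${H[\sym{G}]}_m$ is a subcoalgebra since $\Delta$ preserves the grading); as the coefficient coalgebra of any subcomodule of ${(\omega M)}^{\hotimes n}$ lies in the degree-$n$ block, comodules supported in distinct degrees cannot be isomorphic.

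For Part~5 I would first show that a quotient coalgebra of a cosemisimple coalgebra is cosemisimple: every $H[\sym{G},\sym{E}]$-comodule is an $H[\sym{G}]$-comodule whose coaction factors through the quotient, hence semisimple over $H[\sym{G}]$ by Proposition~\ref{prop_hg}(2), and any of its $H[\sym{G}]$-subcomodules is automatically an $H[\sym{G},\sym{E}]$-subcomodule, so it is semisimple over $H[\sym{G},\sym{E}]$ as well; the splitting $\End=k$ is inherited, yielding split cosemisimplicity. The enumeration of simples follows from the graded decomposition: the block ${H[\sym{G},\sym{E}]}_m$ is the coefficient coalgebra of ${(\omega M)}^{\hotimes m}$, whose simple constituents, by Parts~3 and~4, are exactly the simple $H$-comodules appearing in the decomposition of ${(\omega M)}^{\hotimes m}$, and Part~4 rules out coincidences between blocks of different degree; that $\bar\pi$ carries each $V^{(m)}_j$ to the corresponding $H$-comodule is then immediate from Part~1. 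The main obstacle is the reverse inclusion in Part~2, where one must verify that the combinatorial relations defining $I_\sym{E}$ implement precisely the endomorphisms prescribed by $\sym{E}$; this hinges on the identity $\omega(f\otimes\id_M)=\omega f\hotimes\id_{\omega M}$ and on the equivalence between~\eqref{eq_schurweylrelation} and colinearity, after which Parts~3--5 are formal consequences obtained through idempotents and the grading.
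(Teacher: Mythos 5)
Your Parts 1--4 are correct and follow essentially the paper's own route: Part 1 from Theorem~\ref{thm_surjection}(3),(4) and multiplicativity of $\bar\pi$ (your observation that $\epsilon\circ\bar\pi=\epsilon$ forces the truncation idempotents of~\eqref{eq_idempotent} to agree is exactly the right justification for the paper's terse statement); Part 2 from the generating set of Definition~\ref{def_schurweylsystem}, whose members are $H[\sym{G},\sym{E}]$-colinear by the equivalence between relation~\eqref{eq_schurweylrelation} and colinearity established in Proposition~\ref{prop_factorendo}; Part 3 via the splitting idempotent supplied by cosemisimplicity of $H$; and Part 4 via Part 2 in equal degrees and the homogeneous grading of coefficient coalgebras in unequal degrees.

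Part 5, however, opens with a false principle. You claim every $H[\sym{G},\sym{E}]$-comodule ``is an $H[\sym{G}]$-comodule whose coaction factors through the quotient,'' so that cosemisimplicity descends from $H[\sym{G}]$. Comodules push \emph{forward} along the coalgebra surjection $p\colon H[\sym{G}]\to H[\sym{G},\sym{E}]$; they do not pull back: a coaction $V\to V\otimes H[\sym{G},\sym{E}]$ in general admits no coassociative lift to $V\otimes H[\sym{G}]$. (The algebra fact you are dualizing --- modules over a quotient $A/I$ form a subcategory of $A$-modules --- dualizes to \emph{sub}coalgebras, not quotient coalgebras.) Moreover the principle itself fails: quotient coalgebras of $C$ correspond dually to subalgebras of $C^\ast$, and subalgebras of semisimple algebras need not be semisimple; for instance, the $2\times 2$ matrix coalgebra has a quotient coalgebra dual to the upper-triangular subalgebra of $M_2(k)$, and that quotient is not cosemisimple. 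Had your principle been true, \emph{every} homogeneous biideal quotient of $H[\sym{G}]$ would be cosemisimple and the specific form of $I_\sym{E}$ would be irrelevant to Part 5 --- whereas it is precisely this form that the paper exploits.

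The repair is the paper's actual argument, after which the rest of your paragraph goes through. Homogeneity of $I_\sym{E}$ gives $H[\sym{G},\sym{E}]=\coprod_{m}{H[\sym{G},\sym{E}]}_m$ with ${H[\sym{G},\sym{E}]}_m=C({(\omega M)}^{\hotimes m})$, as you note. Your Parts 2--4, together with cosemisimplicity of $H$, show that the $H$-decomposition of ${(\omega M)}^{\hotimes m}$ into simples $V^{(m)}_j$ with multiplicities $p_j$ is a complete decomposition over $H[\sym{G},\sym{E}]$ with $\End_{\sym{M}^{H[\sym{G},\sym{E}]}}(V^{(m)}_j)\cong k$. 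Writing $V={(\omega M)}^{\hotimes m}$ and $E=\End_{\sym{M}^{H[\sym{G},\sym{E}]}}(V)\cong\bigoplus_j k^{p_j\times p_j}$, the relations~\eqref{eq_schurweylrelation} quotient $V^\ast\otimes V$ exactly by the $E$-action, so that
\begin{equation*}
{H[\sym{G},\sym{E}]}_m=V^\ast\otimes_E V\cong\bigoplus_{j=1}^{\ell_m}{\bigl(V^{(m)}_j\bigr)}^\ast\otimes V^{(m)}_j,
\end{equation*}
a direct sum of matrix coalgebras; it is this computation, not a formal inheritance from $H[\sym{G}]$, that yields split cosemisimplicity. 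With that in place, your enumeration of the simples $V^{(m)}_j$ and the push-forward statement for $\bar\pi$ follow as you describe.
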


\begin{proof}
\begin{enumerate}
\item
Theorem~\ref{thm_surjection}, Parts~(3) and~(4) and the fact that $\bar\pi$
is a homomorphism of algebras.
\item
If $f$ is $H[\sym{G},\sym{E}]$-colinear, then it is also
$H$-colinear, using the homomorphism of coalgebras $\bar\pi\colon
H[\sym{G},\sym{E}]\to H$. Conversely, if $f$ is $H$-colinear, then
$f$ is contained in the $k$-linear span of the set of all finite
products of elements of the form $\omega f^{(m)}\otimes\id_{{(\omega
M)}^{\hotimes(n-m)}}$, $0\leq m\leq n$, $f^{(m)}\in
E^{(m)}$. Each endomorphism of the spanning set is right
$H[\sym{G},\sym{E}]$-colinear by Proposition~\ref{prop_factorendo},
and so $f$ is $H[\sym{G},\sym{E}]$-colinear as well.
\item
Let $V$ be an $H[\sym{G},\sym{E}]$-subcomodule, \ie\ we have
$\beta^{(H[\sym{G},\sym{E}])}_{{(\omega M)}^{\hotimes n}}(V)\subseteq V\otimes
H[\sym{G},\sym{E}]$. Applying $(\id_{{(\omega M)}^{\hotimes
n}}\otimes\bar\pi)$ shows that $\beta^{(H)}_{{(\omega M)}^{\hotimes
n}}(V)\subseteq V\otimes H$. Conversely, let $V$ be an
$H$-subcomodule. Since $H$ is cosemisimple, ${(\omega M)}^{\hotimes n}=V\oplus
W$ with some subcomodule $W$. The linear map $f\colon{(\omega M)}^{\hotimes
n}\to{(\omega M)}^{\hotimes n}$ with $f|_V=\id_V$ and $f|_W=0$ forms a
morphism of $H$-comodules. By Part~2, $f$ is $H[\sym{G},\sym{E}]$-colinear
and therefore its image $V$ an $H[\sym{G},\sym{E}]$-subcomodule.
\item
If $n\neq m$, the coefficient coalgebras
$C(V)\subseteq{H[\sym{G},\sym{E}]}_n$ and
$C(W)\subseteq{H[\sym{G},\sym{E}]}_m$ are in different degree, and
so $C(V)\cap C(W)=\{0\}$ which implies the claim. In the case in
which $n=m$, we use Part~(2).
\item
In order to show that $H[\sym{G},\sym{E}]$ is split cosemisimple, we
show that it is a coproduct (direct sum) of matrix coalgebras.

First, since the ideal $I_\sym{E}$ in
Definition~\ref{def_schurweyladapted} is generated by homogeneous
elements, the canonical projection $p\colon H[\sym{G}]\to
H[\sym{G},\sym{E}]=H[\sym{G}]/I_\sym{E}$ preserves the grading of
the algebra. Therefore,
\begin{equation}
\label{eq_coproduct}
H[\sym{G},\sym{E}] = \coprod_{m\in\N_0}{H[\sym{G},\sym{E}]}_m.
\end{equation}
Here, the homogeneous components
${H[\sym{G},\sym{E}]}_m=p({(k\sym{G}^m)}^\ast\otimes k\sym{G}^m)$ form
the coefficient coalgebras ${H[\sym{G},\sym{E}]}_m=C({(\omega
M)}^{\hotimes m})$. Recall that $p$ pushes forward the
$H[\sym{G}]$-comodule $k\sym{G}^m$ to the
$H[\sym{G},\sym{E}]$-comodule ${(\omega M)}^{\hotimes m}$
(Theorem~\ref{thm_surjection}(3) and Part~1).

Since $H$ is split cosemisimple, ${(\omega M)}^{\hotimes m}$ as an
$H$-comodule decomposes into
\begin{equation}
\label{eq_decompose}
{(\omega M)}^{\hotimes m}\cong
\underbrace{V^{(m)}_1\oplus\cdots\oplus V^{(m)}_1}_{p_1}
\oplus\cdots\oplus
\underbrace{V^{(m)}_{\ell_m}\oplus\cdots\oplus V^{(m)}_{\ell_m}}_{p_{\ell_m}},\qquad \ell_m\in\N.
\end{equation}
Here, the $V^{(m)}_j$, $1\leq j\leq\ell_m$ are pairwise non-isomorphic
$H$-comodules, and the $p_j\in\N$ denote their
multiplicities. Furthermore, $\End_{\sym{M}^H}(V^{(m)}_j)\cong k$ for
all $j$.

By Part~3, each instance of a $V^{(m)}_j\subseteq {(\omega
M)}^{\hotimes m}$ forms an $H[\sym{G},\sym{E}]$-subcomodule, and by
Part~2, $\End_{\sym{M}^{H[\sym{G},\sym{E}]}}(V^{(m)}_j)\cong k$, and
so~\eqref{eq_decompose} is a complete decomposition of
$H[\sym{G},\sym{E}]$-comodules as well.

If we write $V={(\omega M)}^{\hotimes m}$ and
$E=\End_{\sym{M}^{H[\sym{G},\sym{E}]}}(V)$, the construction of
$H[\sym{G},\sym{E}]$ in Definitions~\ref{def_schurweylsystem}
and~\ref{def_schurweyladapted} shows that
\begin{equation}
{H[\sym{G},\sym{E}]}_m = V^\ast\otimes_E V.
\end{equation}
Thanks to~\eqref{eq_decompose}, $E$ is known to be the product of matrix
algebras
\begin{equation}
E=\bigoplus_{j=1}^{\ell_m} k^{p_j\times p_j},
\end{equation}
and a direct computation shows that
\begin{equation}
{H[\sym{G},\sym{E}]}_m = V^\ast\otimes_E V = \bigoplus_{j=1}^{\ell_m}{(V^{(m)}_j)}^\ast\otimes V^{(m)}_j.
\end{equation}
Combining this with~\eqref{eq_coproduct} proves the claim.
\end{enumerate}
\end{proof}

\section{Comparing different powers of $M$}
\label{sect_monoid}

In order to fully understand the relationship between the categories
of comodules of $H[\sym{G},\sym{E}]$ and of $H$, we determine the
preimage under $\hat\pi$ (Proposition~\ref{prop_schurweyl}(5)) of the
simple comodules of $H$, \ie\ all those simple
$H[\sym{G},\sym{E}]$-comodules that are pushed forward by $\bar\pi$ to
the \emph{same} simple $H$-comodule.

In the following, the assumption that $\sym{C}$ be left-autonomous,
\ie\ that $H=\coend(\sym{C},\omega)$ is a WHA, is not only used to
imply that the monoidal unit is multiplicity-free, but is rather a
key to comparing the categories of comodules of $H[\sym{G},\sym{E}]$
and of $H$.

The assumption guarantees that it is sufficient to determine the
preimage under $\hat\pi$ of the monoidal unit $\omega\one$. It turns
out that all simple isomorphism types in that preimage are obtained by
conjugating the monoidal unit of $\sym{M}^{H[\sym{G},\sym{E}]}$ by
group-like elements $g$ such that $\bar\pi(g)=1$. Dividing the
endomorphism adapted WBA $H[\sym{G},\sym{E}]$ by $1-g$ for these
group-likes $g$ then yields a WHA that is isomorphic to $H$.

\subsection{Group-like comodules}
\label{sect_gplike}

In this subsection, we consider an arbitrary WBA $H$.

\begin{definition}
An element $g\in H$ of a WBA $H$ is called
\begin{myenumerate}
\item
\emph{right group-like} if $\Delta g = g1^\prime\otimes g1^\pprime$ and $\epsilon_s(g)=1$,
\item
\emph{left group-like} if $\Delta g = 1^\prime g\otimes1^\pprime g$ and $\epsilon_t(g)= 1$,
\item
\emph{group-like} if it is both right and left group-like.
\end{myenumerate}
\end{definition}

The set of group-like elements in a WBA forms a monoid. Note that we
do not require the group-like elements of a WBA to have a
multiplicative inverse. If $H$ is a WHA, however, every group-like
$g\in H$ has the inverse $g^{-1}=S(g)$.

\begin{proposition}
\label{prop_gplike}
Let $H$ be a WBA.
\begin{myenumerate}
\item
For each group-like $g\in H$, there is a right $H$-comodule structure on
the vector space $H_s$ given by
\begin{equation}
\label{eq_gplikecomod}
\beta_{H_s^g}\colon H_s\to H_s\otimes H,\qquad x\mapsto x_0\otimes (gx_1),
\end{equation}
where $\beta_\one\colon H_s\to H_s\otimes H, x\mapsto x_0\otimes x_1$ denotes
the right $H$-comodule structure on $H_s$ that yields the monoidal unit of
$\sym{M}^H$, \cf~\eqref{eq_betahs}. In the following, we denote the comodule
with the structure~\eqref{eq_gplikecomod} by $H_s^g$.
\item
If $g_1,g_2\in H$ are group-like, then
\begin{equation}
H_s^{g_1}\hotimes H_s^{g_2}\cong H_s^{g_1g_2}.
\end{equation}
\end{myenumerate}
\end{proposition}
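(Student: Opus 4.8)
The plan is to check Part~1 directly against the comodule axioms and to prove Part~2 by exhibiting an explicit comodule isomorphism. For Part~1, the unit coaction \eqref{eq_betahs} is $\beta_\one(x)=x^\prime\otimes x^\pprime=\Delta(x)$, so $\beta_{H_s^g}(x)=x^\prime\otimes gx^\pprime$, and this still lands in $H_s\otimes H$ because $\Delta(H_s)\subseteq H_s\otimes H$, i.e.\ $x^\prime\in H_s$. Coassociativity is the easy axiom: since $\Delta$ is multiplicative and $g$ is right group-like, $\Delta(gx^\pprime)=\Delta(g)\Delta(x^\pprime)=(g\otimes g)\Delta(1)\Delta(x^\pprime)=(g\otimes g)\Delta(x^\pprime)$, using $\Delta(1)\Delta(x^\pprime)=\Delta(x^\pprime)$. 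Applying coassociativity of $\beta_\one$ and then multiplying the last two tensor legs by $g$ shows at once that $(\beta_{H_s^g}\otimes\id)\beta_{H_s^g}=(\id\otimes\Delta)\beta_{H_s^g}$.

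The counit axiom is where $\epsilon_s(g)=1$ is genuinely used. For $x\in H_s$ one has $\Delta(x)=1^\prime\otimes 1^\pprime x$ with $1^\prime\in H_s$, and $1^\pprime x=x1^\pprime$ because $H_s$ and $H_t$ commute. Hence $x^\prime\,\epsilon(gx^\pprime)=1^\prime\epsilon(g1^\pprime x)=1^\prime\epsilon(gx1^\pprime)=\epsilon_s(gx)$, the last equality being the definition of $\epsilon_s$ applied to $gx$. Since $\epsilon_s$ is right $H_s$-linear and $\epsilon_s(g)=1$, this equals $\epsilon_s(g)x=x$, which is the counit law. Thus $H_s^g$ is a right $H$-comodule.

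For Part~2, recall first that the group-likes form a monoid, so $g_1g_2$ is again group-like and $H_s^{g_1g_2}$ is defined by Part~1. The truncated product $H_s^{g_1}\hotimes H_s^{g_2}$ is $\im P$ inside $H_s\otimes H_s$, carrying the coaction $x\otimes y\mapsto(x^\prime\otimes y^\prime)\otimes g_1x^\pprime g_2y^\pprime$, \cf~\eqref{eq_idempotent}. The naive guess $x\otimes y\mapsto xy$ is \emph{not} colinear, because its $H$-leg would read $g_1g_2(xy)^\pprime$ whereas the source produces $g_1x^\pprime g_2y^\pprime$, and $g_2$ does not commute past $x^\pprime$. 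I would instead take the twisted left-unit-constraint map $\phi(x\otimes y)=y^\prime\,\epsilon(x\,g_2\,y^\pprime)$, which on $H_s\otimes H_s$ simplifies to $\epsilon_s(xg_2)\,y$ (using $x,y\in H_s$ and right $H_s$-linearity of $\epsilon_s$); note that the formula does not mention $g_1$, yet lands in $H_s^{g_1g_2}$ because its domain $\im P$ already encodes $g_1$.

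Colinearity of $\phi$ from $H_s^{g_1}\hotimes H_s^{g_2}$ to $H_s^{g_1g_2}$ is the main obstacle, and I expect to settle it by a direct computation. On the side $(\phi\otimes\id)\beta_{\hotimes}$, substituting $\Delta(x)=1^\prime\otimes 1^\pprime x$ turns the $x$-contraction into an $\epsilon_t$; regrouping $g_2y^{(2)}\otimes g_2y^{(3)}=\Delta(g_2y^\pprime)$ then lets one apply the standard identity $\epsilon_t(h^\prime)h^\pprime=h$, while the leftover base-algebra factors cancel by $H_s$--$H_t$ commutativity, so that side reduces to $y^\prime\otimes g_1\,x\,g_2\,y^\pprime$; the side $\beta_{g_1g_2}\circ\phi$ reduces to the same expression by a parallel manipulation. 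Finally I would show $\phi$ is bijective by constructing its inverse as the matching twisted $\lambda^{-1}$ built from $g_1$ and the projection $P$, and verifying that the two composites are $\id_{H_s}$ and $P$; alternatively, once colinearity is in hand, invertibility can be read off on the basis of $H_s$ adapted to the idempotents $\lambda_j$. The only real work throughout is the bookkeeping of how the group-likes commute past the elements of the base algebras.
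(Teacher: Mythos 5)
Your proposal is correct and takes essentially the same route as the paper: the paper's entire proof of Part~2 is to use the linear map underlying the left-unit constraint $\lambda_{H_s^{g_2}}\colon\one\hotimes H_s^{g_2}\to H_s^{g_2}$, which is exactly your map $\phi(x\otimes y)=y^\prime\,\epsilon(x g_2 y^\pprime)$, and Part~1 is the direct verification the paper records as straightforward. Your additional computations (both sides of the colinearity check reducing to $y^\prime\otimes g_1xg_2y^\pprime$, bijectivity via the truncated inverse of the unit constraint) simply fill in the details the paper leaves to the reader.
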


\begin{proof}
Parts~(1) and~(3) are straightforward. For part~(2), the left-unit constraint
of $H_s^{g_2}$, $\lambda_{H_s^{g_2}}\colon\one\hotimes H_s^{g_2}\to H_s^{g_2}$,
yields the linear map underlying the isomorphism $H_s^{g_1}\hotimes
H_s^{g_2}\to H_s^{g_1g_2}$.
\end{proof}

Given a homomorphism of bialgebras $f\colon H\to H^\prime$ and the
induced push-forward functor
$f^\ast\colon\sym{M}^H\to\sym{M}^{H^\prime}$, the $H$-comodules sent
by $f^\ast$ to the monoidal unit $\one\in|\sym{M}^{H^\prime}|$ are
precisely the comodules $H^g_s$ for the group-like elements $g\in H$
that satisfy $f(g)=1$. This is established in the remainder of this
subsection and will be applied to the homomorphism $\bar\pi\colon
H[\sym{G},\sym{E}]\to H$.

\begin{proposition}[from {\cite[Lemma~4.1]{Sc03}}]
In every WBA $H$, the restriction ${\epsilon_t}|_{H_s}\colon H_s\to
H_t$ forms an algebra anti-isomorphism with inverse
${\bar\epsilon_s}|_{H_t}$ where
$\bar\epsilon_s(x)=1^\prime\epsilon(1^\pprime x)$ for all $x\in H$.
\end{proposition}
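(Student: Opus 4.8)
The plan is to treat the statement as three facts about the two linear idempotents $\epsilon_s,\epsilon_t$ and the variant $\bar\epsilon_s$: (i) the restrictions land in the correct base algebras, (ii) $\epsilon_t|_{H_s}$ and $\bar\epsilon_s|_{H_t}$ are mutually inverse bijections, and (iii) $\epsilon_t|_{H_s}$ reverses products. I would organize everything around one structural lemma describing how $\Delta$ acts on the base algebras, derived directly from the weak-unit axioms: for $y\in H_s$ one has $\Delta y = 1^\prime\otimes 1^\pprime y = 1^\prime\otimes y1^\pprime$, and for $z\in H_t$ one has $\Delta z = 1^\prime z\otimes 1^\pprime = z1^\prime\otimes 1^\pprime$. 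I obtain these by computing $\Delta(\epsilon_s(x))$ and $\Delta(\epsilon_t(x))$ from the defining formulas $\epsilon_s(x)=1^\prime\epsilon(x1^\pprime)$, $\epsilon_t(x)=\epsilon(1^\prime x)1^\pprime$ and rewriting the iterated coproduct $1^\prime\otimes1^\pprime\otimes1^\ppprime$ of the unit via the two compatibility conditions $(\Delta\otimes\id)\Delta\eta=\cdots$, which express it as $1_{(1)}\otimes 1_{(2)}1_{[1]}\otimes 1_{[2]}=1_{(1)}\otimes 1_{[1]}1_{(2)}\otimes 1_{[2]}$ in terms of two independent copies $1_{(1)}\otimes 1_{(2)}$ and $1_{[1]}\otimes 1_{[2]}$ of $\Delta 1$. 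Along the way I note that $H_s$ and $H_t$ are spanned by the first and second legs of $\Delta 1$, so that $\im\bar\epsilon_s\subseteq H_s$ while $\im\epsilon_t=H_t$, settling (i).

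For (ii), I would first record that $\bar\epsilon_s$ and $\epsilon_t$ each restrict to the identity on their target base algebra: applying $\id\otimes\epsilon$ to $\Delta y = 1^\prime\otimes 1^\pprime y$ gives $\bar\epsilon_s(y)=1^\prime\epsilon(1^\pprime y)=y$ for $y\in H_s$ by the counit law, and dually applying $\epsilon\otimes\id$ to $\Delta z = 1^\prime z\otimes 1^\pprime$ gives $\epsilon_t(z)=z$ for $z\in H_t$. Next I would prove the two global identities $\bar\epsilon_s\circ\epsilon_t=\bar\epsilon_s$ and $\epsilon_t\circ\bar\epsilon_s=\epsilon_t$ on all of $H$. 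Each is a short Sweedler computation: one expands the inner map, pulls the resulting scalar out, and collapses the surviving copy of $\Delta 1$ by the weak-counit special cases $\sum\epsilon(X1^\prime)\epsilon(1^\pprime Y)=\epsilon(XY)$ and $\sum\epsilon(X1^\pprime)\epsilon(1^\prime Y)=\epsilon(XY)$. Restricting these two identities to $H_s$ and to $H_t$ respectively and combining with the previous sentence yields $\bar\epsilon_s|_{H_t}\circ\epsilon_t|_{H_s}=\id_{H_s}$ and $\epsilon_t|_{H_s}\circ\bar\epsilon_s|_{H_t}=\id_{H_t}$, so the two restrictions are inverse bijections.

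It remains to show (iii), that $\epsilon_t(y_1y_2)=\epsilon_t(y_2)\epsilon_t(y_1)$ for $y_1,y_2\in H_s$, unitality $\epsilon_t(1)=1$ being immediate from the counit law. I would start from $\epsilon_t(y_1y_2)=\epsilon(1^\prime y_1y_2)1^\pprime$, split the threefold product with a weak-counit axiom, and then use the structural lemma $\Delta y_1 = 1^\prime\otimes 1^\pprime y_1$ to replace the coproduct of $y_1$ by legs of $\Delta 1$, repeating for $y_2$ and recombining the legs through the unit-compatibility relations above. I expect this reordering to be the main obstacle: because $\epsilon$ is only weakly multiplicative and $\Delta 1\neq 1\otimes 1$, one cannot simply distribute, and producing the \emph{reversed} product $\epsilon_t(y_2)\epsilon_t(y_1)$ requires threading the two independent copies of $\Delta 1$ in exactly the right order. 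As a possibly cleaner alternative, since (ii) already makes $\epsilon_t|_{H_s}$ and $\bar\epsilon_s|_{H_t}$ mutually inverse, it suffices to prove instead that $\bar\epsilon_s$ is product-reversing on $H_t$ (the inverse of an anti-homomorphism is again an anti-homomorphism), and I would fall back to that formulation if the direct computation on $H_s$ proves unwieldy.
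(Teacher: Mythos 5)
The paper offers no argument for this proposition at all --- it is imported verbatim from Schauenburg \cite[Lemma~4.1]{Sc03} --- so your proof has to stand on its own. Your parts (i) and (ii) do: the structural lemma $\Delta y=1^\prime\otimes 1^\pprime y=1^\prime\otimes y1^\pprime$ for $y\in H_s$ (and its $H_t$-twin) follows from the two unit-compatibility axioms exactly as you describe; the identities $\bar\epsilon_s|_{H_s}=\id_{H_s}$, $\epsilon_t|_{H_t}=\id_{H_t}$, $\bar\epsilon_s\circ\epsilon_t=\bar\epsilon_s$ and $\epsilon_t\circ\bar\epsilon_s=\epsilon_t$ all check out by the one-line computations you indicate; and together with $\im\bar\epsilon_s\subseteq H_s$ (which does need the small observation $\Delta 1=\epsilon_s(1^\prime)\otimes 1^\pprime$, of the same difficulty as your lemma) they yield that $\epsilon_t|_{H_s}$ and $\bar\epsilon_s|_{H_t}$ are mutually inverse unital bijections. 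This is a clean and correct route to the bijectivity half of the statement.

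The genuine gap is (iii): anti-multiplicativity is the actual content of the word ``anti-isomorphism,'' and you have only a plan for it, together with an accurate premonition that the reordering is the obstacle. The plan does not go through as stated: expanding $\epsilon_t(y_1y_2)=\epsilon(1^\prime y_1y_2)1^\pprime$ by a weak-counit axiom, substituting $\Delta y_1=1^\prime\otimes 1^\pprime y_1$, and recombining via the unit axioms produces $\epsilon$-patterns such as $\epsilon(1_{(1)}X)\,\epsilon(1_{(2)}Y)$ with \emph{both} legs of $\Delta 1$ multiplied from the left, or free products of two second legs $1_{(2)}1_{[2]}$; neither is collapsible by your two special cases $\epsilon(X1^\prime)\epsilon(1^\pprime Y)=\epsilon(XY)$ and $\epsilon(X1^\pprime)\epsilon(1^\prime Y)=\epsilon(XY)$ (those require the contracted legs at opposite outer ends), and iterating the same moves cycles back to expressions of the shape one started from. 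Your fallback --- proving instead that $\bar\epsilon_s$ reverses products on $H_t$, which is indeed sufficient since the inverse of an anti-homomorphism is an anti-homomorphism --- meets the identical patterns, so it is not a safety net. The missing ingredient is an exchange identity, for instance: for all $y\in H_s$,
\begin{equation*}
\Delta(1)\,(y\otimes 1)\;=\;\Delta(1)\,\bigl(1\otimes\epsilon_t(y)\bigr),
\qquad\text{i.e.}\qquad
1^\prime y\otimes 1^\pprime \;=\; 1^\prime\otimes 1^\pprime\,\epsilon_t(y),
\end{equation*}
whose consistency you can test with exactly your part-(ii) identities (applying $\id\otimes\epsilon$ to the right-hand side uses $\bar\epsilon_s\circ\epsilon_t=\bar\epsilon_s$ and $\bar\epsilon_s|_{H_s}=\id$). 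Once this is in hand, anti-multiplicativity is a one-liner: applying $\epsilon(\,\cdot\,z)\otimes\id$ with $z\in H_s$ gives $\epsilon_t(yz)=\epsilon(1^\prime yz)1^\pprime=\epsilon(1^\prime z)\,1^\pprime\epsilon_t(y)=\epsilon_t(z)\,\epsilon_t(y)$. Such an exchange identity (or an equivalent) is the crux of Schauenburg's Lemma~4.1 and of the corresponding computations in \cite{BoNi99}; without proving it or some substitute, your write-up establishes only that the two restrictions are mutually inverse unital linear bijections, not that they are algebra anti-isomorphisms.
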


\begin{proposition}[from {\cite[Lemma~6.3]{Sc03}}]
Let $f\colon H\to H^\prime$ be a homomorphism of WBAs. Then the
restrictions $f|_{H_s}\colon H_s\to H^\prime_s$ and $f|_{H_t}\colon
H_t\to H^\prime_t$ form isomorphisms of algebras. Their inverses are
${f|_{H_s}}^{-1}(y) = 1^\prime\epsilon(f(1^\pprime)y)$ for all $y\in
H^\prime_s$ and ${f|_{H_t}}^{-1}(z) = \epsilon(zf(1^\prime))1^\pprime$
for all $z\in H^\prime_t$, respectively.
\end{proposition}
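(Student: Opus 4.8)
The plan is to base everything on a single structural observation: a homomorphism of WBAs carries the canonical element $\Delta(1)$ onto the canonical element of the target. Concretely, since $f\circ\eta=\eta^\prime$ and $f$ respects comultiplication, we get $(f\otimes f)(\Delta(1))=\Delta^\prime(f(1))=\Delta^\prime(1)$, \ie\ $f(1^\prime)\otimes f(1^\pprime)=1^\prime\otimes 1^\pprime$ in $H^\prime\otimes H^\prime$. I expect this identity, together with the facts that $f$ respects $\mu$ and $\epsilon$, to do essentially all of the work.

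First I would check that $f$ really restricts to the base algebras. Using the displayed relation one computes $f(\epsilon_s(x))=f(1^\prime)\epsilon(x1^\pprime)=f(1^\prime)\epsilon^\prime(f(x)f(1^\pprime))=1^\prime\epsilon^\prime(f(x)1^\pprime)=\epsilon^\prime_s(f(x))$, so that $f\circ\epsilon_s=\epsilon^\prime_s\circ f$ and hence $f(H_s)=f(\epsilon_s(H))\subseteq\epsilon^\prime_s(H^\prime)=H^\prime_s$; the analogous computation gives $f\circ\epsilon_t=\epsilon^\prime_t\circ f$ and $f(H_t)\subseteq H^\prime_t$. Since $f$ is a homomorphism of unital algebras, $H_s,H^\prime_s$ are subalgebras, and $f(1)=1$ lies in both base algebras, the restrictions $f|_{H_s}$ and $f|_{H_t}$ are unital homomorphisms of algebras, so it only remains to prove bijectivity and to identify the inverses.

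For this I would simply verify that the stated formulas work. Set $g_s(y)=1^\prime\epsilon^\prime(f(1^\pprime)y)$ for $y\in H^\prime_s$; its values lie in the span of the first tensor factors of $\Delta(1)$, which is $H_s$, so $g_s\colon H^\prime_s\to H_s$. For $x\in H_s$, using that $f$ respects $\mu$ and that $\epsilon^\prime\circ f=\epsilon$, we get $g_s(f(x))=1^\prime\epsilon^\prime(f(1^\pprime x))=1^\prime\epsilon(1^\pprime x)=\bar\epsilon_s(x)$, and for $y\in H^\prime_s$ the canonical-element relation gives $f(g_s(y))=f(1^\prime)\epsilon^\prime(f(1^\pprime)y)=1^\prime\epsilon^\prime(1^\pprime y)=\bar\epsilon^\prime_s(y)$. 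Thus both composites collapse to $\bar\epsilon_s$ (resp.\ $\bar\epsilon^\prime_s$) applied to an element already lying in the source base algebra. The target case is entirely parallel, with $g_t(z)=\epsilon^\prime(zf(1^\prime))1^\pprime$ and $\bar\epsilon_t(x)=\epsilon(x1^\prime)1^\pprime$; alternatively, once $f|_{H_s}$ is known to be an isomorphism, $f|_{H_t}$ follows formally from the preceding proposition, because $\epsilon_t|_{H_s}$ and $\epsilon^\prime_t|_{H^\prime_s}$ are anti-isomorphisms and $f$ intertwines $\epsilon_t$, yielding a commuting square of bijections.

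The only non-formal ingredient, and hence the main obstacle, is the WBA identity $\bar\epsilon_s|_{H_s}=\id_{H_s}$ (and its target analogue $\bar\epsilon_t|_{H_t}=\id_{H_t}$), which is exactly what turns the two composites above into identity maps. I would establish this by recalling that $\bar\epsilon_s$ is an idempotent with image $H_s$ and therefore acts as the identity on its own image; this is a standard consequence of the axioms, and is already implicit in the preceding proposition, where $\bar\epsilon_s|_{H_t}$ appears as the inverse of $\epsilon_t|_{H_s}$. Everything else is bookkeeping with the defining properties of a homomorphism of WBAs and the single relation $(f\otimes f)(\Delta(1))=\Delta^\prime(1)$.
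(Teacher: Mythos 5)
The paper does not prove this proposition at all --- it is imported verbatim from Schauenburg~\cite[Lemma~6.3]{Sc03} --- so there is no in-paper argument to compare against; judged on its own, your verification is correct and essentially self-contained. Your structural observation $(f\otimes f)(\Delta(1))=\Delta^\prime(1)$ is exactly the right engine: it gives $f\circ\epsilon_s=\epsilon_s^\prime\circ f$ (so the restrictions are well defined), and it collapses both composites $g_s\circ f|_{H_s}$ and $f|_{H_s}\circ g_s$ to $\bar\epsilon_s$ and $\bar\epsilon_s^\prime$ as you claim. You correctly isolate the one load-bearing ingredient, $\bar\epsilon_s|_{H_s}=\id_{H_s}$, and your proposed justification does go through, but note what it actually requires: idempotency of $\bar\epsilon_s$ is the one-line weak-counit computation $\bar\epsilon_s(\bar\epsilon_s(x))=1^\prime\epsilon(1^\pprime\tilde 1^\prime)\epsilon(\tilde 1^\pprime x)=1^\prime\epsilon(1^\pprime x)$; the containment $\im\bar\epsilon_s\subseteq H_s$ uses that the first legs of $\Delta(1)$ lie in $H_s$, which with this paper's conventions ($\epsilon_s(x)=1^\prime\epsilon(x1^\pprime)$) follows from $(\epsilon_s\otimes\id)\Delta(1)=\Delta(1)$, a consequence of the weak unit axiom; and the reverse containment $H_s\subseteq\im\bar\epsilon_s$ is not automatic but does follow from the preceding proposition, since $\bar\epsilon_s|_{H_t}$ surjects onto $H_s$, exactly as you indicate --- alternatively, $\bar\epsilon_s(\epsilon_s(z))=\epsilon_s(z)$ can be checked directly from the weak unit axiom, making the lemma independent of Lemma~4.1. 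One small caveat on the target side: your ``entirely parallel'' argument via $\bar\epsilon_t$ needs the mirror of the quoted Lemma~4.1 (that $\bar\epsilon_t(H_s)=H_t$), which the paper does not quote but which holds by the op-cop symmetry of the WBA axioms; your fallback route through the anti-isomorphism square $f|_{H_t}=\epsilon_t^\prime\circ f|_{H_s}\circ(\epsilon_t|_{H_s})^{-1}$ gives bijectivity cleanly, though it identifies the stated formula for $(f|_{H_t})^{-1}$ only after a further short computation that you should not leave implicit in a final write-up.
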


The following proposition generalizes some results of
Nikshych~\cite{Ni02} and Vecserny{\'e}s~\cite{Ve03} from WHAs to WBAs.

\begin{proposition}
\label{prop_grouplikecomodule}
Let $H$ be a WBA.
\begin{myenumerate}
\item
There is a one-to-one correspondence between right group-like
elements $g\in H$ and right $H$-coactions $\beta\colon H_s\to
H_s\otimes H$ that satisfy
\begin{equation}
(\id_{H_s}\otimes\epsilon_s)\circ\beta = (\id_{H_s}\otimes\epsilon_s)\circ\Delta\circ\epsilon_s.
\end{equation}
The correspondence is given by $g=\epsilon(1_0)1_1$ and
$\beta(x)=\epsilon_s(x^\prime)\otimes gx^\pprime$ for all $x\in
H_s$.
\item
There is a one-to-one correspondence between left group-like
elements $g\in H$ and right $H$-coactions $\beta\colon H_t\to
H_t\otimes H$ that satisfy
\begin{equation}
(\id_{H_t}\otimes\epsilon_t)\circ\beta = (\epsilon_t\otimes\id_{H_t})\circ\Delta\circ\epsilon_t.
\end{equation}
The correspondence is given by $g=\epsilon(1_0)1_1$ and
$\beta(z)=\epsilon_t({(zg)}^\prime)\otimes{(zg)}^\pprime$.
\end{myenumerate}
\end{proposition}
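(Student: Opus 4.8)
The plan is to prove Part~(1) by showing that the two assignments of the statement are mutually inverse bijections, and then to read off Part~(2) as the mirror image under the source--target symmetry of a WBA. Write $\Phi(g)=\beta_g$ with $\beta_g(x)=\epsilon_s(x')\otimes gx''$, and $\Psi(\beta)=(\epsilon\otimes\id_H)(\beta(1))=\epsilon(1_0)1_1$, where $1\in H_s$ because $\epsilon_s(1)=1$. I must verify: (i) if $g$ is right group-like then $\beta_g$ is a coaction on $H_s$ satisfying the stated identity; (ii) if $\beta$ satisfies the identity then $\Psi(\beta)$ is right group-like; and (iii) $\Psi\circ\Phi=\id$ and $\Phi\circ\Psi=\id$. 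Throughout I use the explicit forms $\epsilon_s(x)=1'\epsilon(x1'')$, $\epsilon_t(x)=\epsilon(1'x)1''$, and the fact that $x'\in H_s$ whenever $x\in H_s$ (as in~\eqref{eq_betahs}), so that $\epsilon_s(x')=x'$ on $H_s$.

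For~(i), the key observation is that $\beta_g$ is nothing but the group-like comodule $H_s^g$ of Proposition~\ref{prop_gplike}: since $\epsilon_s(x')=x'$ on $H_s$, the formula reads $\beta_g(x)=x'\otimes gx''$, which is the coaction~\eqref{eq_gplikecomod}. Hence well-definedness, coassociativity and counitality are already furnished by Proposition~\ref{prop_gplike}(1), and only the special identity is new. That identity reduces to $\epsilon_s(gx'')=\epsilon_s(x'')$, which follows from $\epsilon_s(g)=1$ together with the standard WBA identity $\epsilon_s(hk)=\epsilon_s(\epsilon_s(h)k)$ (a consequence of the weak counit axiom $\epsilon(a1'')\epsilon(1'c)=\epsilon(ac)$): indeed $\epsilon_s(gx'')=\epsilon_s(\epsilon_s(g)x'')=\epsilon_s(x'')$.

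The normalisation in~(ii) is immediate: evaluating the special identity at $1$ gives $1_0\otimes\epsilon_s(1_1)=1'\otimes\epsilon_s(1'')$, whence, applying $\epsilon$ to the first tensor leg, $\epsilon_s(g)=\epsilon(1_0)\epsilon_s(1_1)=\epsilon_s(\epsilon(1')1'')=\epsilon_s(1)=1$. One half of~(iii) is equally short: $\Psi(\Phi(g))=\epsilon(\epsilon_s(1'))\,g1''=\epsilon(1')\,g1''=g$, using $\epsilon\circ\epsilon_s=\epsilon$ and $\epsilon(1')1''=1$. Everything else rests on a single reconstruction statement: \emph{any} coaction $\beta$ satisfying the special identity has the form $\beta(x)=x'\otimes gx''$ with $g=\Psi(\beta)$. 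Granting this, the remaining half $\Phi\circ\Psi=\id$ is immediate, and the group-like relation $\Delta g=g1'\otimes g1''$ drops out by combining coassociativity at $1$ (which gives $\Delta g=\bigl((\epsilon\otimes\id_H)\beta\bigr)(1_0)\otimes 1_1$) with $\beta(1)=1'\otimes g1''$.

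I expect this reconstruction to be the main obstacle. The natural route is to start from coassociativity $(\beta\otimes\id_H)\beta=(\id_H\otimes\Delta)\beta$, apply $\id_{H_s}\otimes\epsilon_s\otimes\id_H$, and use the special identity on the first tensor factor together with the counital identity $h'\otimes\epsilon_s(h'')=h1'\otimes 1''$; this pins down the middle leg of $\beta$ and, after a further contraction, should recover $x_1=gx''$. For Weak Hopf Algebras, Nikshych and Vecsernyés perform the analogous recovery of $x_1$ from its projection $\epsilon_s(x_1)$ by means of the antipode; since a WBA has no antipode, the computation must be run entirely in terms of the counital maps, with the Schauenburg anti-isomorphism $\epsilon_t|_{H_s}\colon H_s\to H_t$ and its inverse $\bar\epsilon_s|_{H_t}$, $\bar\epsilon_s(x)=1'\epsilon(1''x)$, playing the role that $S$ plays there, and with careful bookkeeping of which legs lie in $H_s$ and which in $H_t$. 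Finally, Part~(2) follows as the mirror image of Part~(1): passing to the weak bialgebra with opposite multiplication and comultiplication interchanges $\epsilon_s$ with $\epsilon_t$, $H_s$ with $H_t$, and right group-like with left group-like elements, and transporting the comodule structures accordingly turns the formulas of Part~(1) into $g=\epsilon(1_0)1_1$ and $\beta(z)=\epsilon_t((zg)')\otimes(zg)''$; equivalently, one simply repeats the computation with the roles of source and target, and of left and right, exchanged.
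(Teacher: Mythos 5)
Your forward direction and the two short computations (the normalisation $\epsilon_s(\Psi(\beta))=1$ and $\Psi\circ\Phi=\id$) are fine, but the proof has a genuine gap exactly where you yourself locate the main obstacle: the reconstruction claim --- that every coaction $\beta$ on $H_s$ satisfying the special identity has the form $\beta(x)=x^\prime\otimes gx^\pprime$ with $g=\epsilon(1_0)1_1$ --- is never proved, and everything substantial (surjectivity of $\Phi$, injectivity of $\Psi$, and even the right group-likeness of $\Psi(\beta)$) is derived only conditionally on it. Worse, the one concrete tool you name for this step is false: in a WBA with the paper's conventions the ``counital identity'' $h^\prime\otimes\epsilon_s(h^\pprime)=h1^\prime\otimes 1^\pprime$ cannot hold, since the second leg on the left lies in $H_s$ while on the right it lies in $H_t$; at $h=1$ it asserts $(\id_H\otimes\epsilon_s)\Delta(1)=\Delta(1)$, which already fails in $H[\sym{G}]$, where $(\id_H\otimes\epsilon_s)\Delta(1)=\sum_{j,r,i}{[j|r]}_0\otimes{[i|r]}_0$ but $\Delta(1)=\sum_{j,r,\ell}{[j|r]}_0\otimes{[r|\ell]}_0$. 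The correct mixed identity is $h^\prime\otimes\epsilon_s(h^\pprime)=h1^\prime\otimes\epsilon_s(1^\pprime)$. The reconstruction can in fact be completed along your lines --- coassociativity plus the special identity give $\beta(x)=\epsilon(x_0)\,\epsilon_s({(x_1)}^\prime)\otimes {(x_1)}^\pprime$, so $\beta$ is determined by $u(x):=\epsilon(x_0)x_1$; one then shows $(\id_H\otimes\epsilon_s)\Delta(u(x))=u(x^\prime)\otimes\epsilon_s(x^\pprime)$, and the corrected identity together with $1^\prime\epsilon_s(1^\pprime)=1$ yields $u(x)=u(1^\prime)\epsilon_s(x1^\pprime)$, from which $u(x)=gx$ and the stated form follow --- but none of this bookkeeping is in your proposal, and it is exactly the part that distinguishes the WBA case from the WHA results of Nikshych and Vecserny{\'e}s, where the antipode does this work. (For the record, the paper states Proposition~\ref{prop_grouplikecomodule} without proof, as a generalization of those WHA results, so a complete direct verification is what is being asked for.)

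Two smaller points. First, you invoke Proposition~\ref{prop_gplike}(1) to get that $\beta_g$ is a coaction, but that proposition assumes $g$ is two-sided group-like, while in Part~(1) here $g$ is only right group-like; the coaction axioms do hold under the weaker hypothesis, but they must be checked --- coassociativity from $\Delta g=g1^\prime\otimes g1^\pprime$ together with $\Delta(h)=\Delta(1)\Delta(h)$, and counitality from $\epsilon_s(gx)=\epsilon_s(\epsilon_s(g)x)=x$, using $\Delta(x)=1^\prime\otimes x1^\pprime$ for $x\in H_s$ --- rather than cited from a statement with a stronger hypothesis. Second, Part~(2) is not literally the op-cop mirror of Part~(1): passing to the opposite multiplication and comultiplication turns right coactions into left coactions, whereas Part~(2) again concerns right $H$-coactions, now on $H_t$; this is precisely why its formula $\beta(z)=\epsilon_t({(zg)}^\prime)\otimes{(zg)}^\pprime$ has a genuinely different shape and is not obtained by relabeling. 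Your fallback --- redoing the computation with sources and targets exchanged --- is the honest route, but it inherits the unproven reconstruction step.
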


\begin{theorem}
\label{thm_comodulepush}
Let $f\colon H\to H^\prime$ be a homomorphism of WBAs. Then the
induced functor $f^\ast\colon\sym{M}^H\to\sym{M}^{H^\prime}$ pushes
forward some right $H$-comodule $N\in|\sym{M}^H|$ to the monoidal unit
$\one\in|\sym{M}^{H^\prime}|$ if and only if $N\cong H_s^g$ for some
group-like $g\in H$ that satisfies $f(g)=1$. This group-like element
is given by
$g=(\epsilon\otimes\id_H)\circ\beta_N\circ\eta(1)=\epsilon(1_0)1_1\in
H$.
\end{theorem}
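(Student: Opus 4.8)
The plan is to prove both directions of the equivalence and then verify the explicit formula for $g$. Since this is an \emph{if and only if} statement, I would organize the argument around the bijective correspondence between group-like elements and certain coactions on $H_s$ that was just established in Proposition~\ref{prop_grouplikecomodule}(1), together with the comodule structure $H_s^g$ from Proposition~\ref{prop_gplike}(1).

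For the \textbf{``if'' direction}, suppose $N\cong H_s^g$ for a group-like $g\in H$ with $f(g)=1$. The comodule $H_s^g$ has coaction $\beta_{H_s^g}(x)=x_0\otimes(gx_1)$, where $x_0\otimes x_1$ is the monoidal-unit coaction on $H_s$. I would compute the pushed-forward coaction directly: the functor $f^\ast$ acts by $(\id_{H_s}\otimes f)\circ\beta_{H_s^g}$, which sends $x\mapsto x_0\otimes f(gx_1)=x_0\otimes f(g)f(x_1)=x_0\otimes f(x_1)$, using that $f$ is an algebra homomorphism and $f(g)=1$. This is exactly the push-forward of the ordinary monoidal-unit coaction $\beta_\one$ on $H_s$. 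Since $f|_{H_s}\colon H_s\to H'_s$ is an algebra isomorphism (the proposition cited from \cite[Lemma~6.3]{Sc03}), the image is the monoidal unit $\one\in|\sym{M}^{H'}|$, and the colinear isomorphism $N\cong H_s^g$ is carried along. Hence $f^\ast N\cong\one$.

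For the \textbf{``only if'' direction}, suppose $f^\ast N\cong\one$. Here the key observation is that the monoidal unit $\one\in|\sym{M}^{H'}|$ has underlying vector space $H'_s$, so $N$ must have the same $k$-dimension as $H'_s$, which equals $\dim_k H_s$ via the isomorphism $f|_{H_s}$. I would first transport the $H'$-colinear isomorphism $f^\ast N\cong\one$ back to identify the underlying space of $N$ with $H_s$, and then define $g:=(\epsilon\otimes\id_H)\circ\beta_N\circ\eta(1)=\epsilon(1_0)1_1\in H$. The main work is to show that this $g$ is group-like and that the coaction $\beta_N$ coincides with $\beta_{H_s^g}$; this is where I would invoke the correspondence of Proposition~\ref{prop_grouplikecomodule}. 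Concretely, the condition that $f^\ast N$ is the monoidal unit forces $\beta_N$ to satisfy the compatibility identity $(\id_{H_s}\otimes\epsilon_s)\circ\beta_N=(\id_{H_s}\otimes\epsilon_s)\circ\Delta\circ\epsilon_s$ after pushing forward and using the isomorphisms $f|_{H_s}$, $f|_{H_t}$ of base algebras; the proposition then yields a right group-like $g$ with $\beta_N(x)=\epsilon_s(x')\otimes gx''$, which one checks agrees with $\beta_{H_s^g}$. Finally $f(g)=1$ follows by applying $f$ to $g=\epsilon(1_0)1_1$ and using that $f^\ast N\cong\one$ means $(\epsilon\otimes\id_{H'})\circ\beta_{f^\ast N}\circ\eta(1)=1_{H'}$.

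\textbf{The main obstacle} I anticipate is the only-if direction: one must show not merely that \emph{some} group-like $g$ works, but that the specific formula $g=\epsilon(1_0)1_1$ produces a genuinely \emph{group-like} (two-sided) element rather than only a right group-like one, and that $\beta_N$ really equals $\beta_{H_s^g}$ on the nose. The left group-like condition $\epsilon_t(g)=1$ and $\Delta g=1'g\otimes 1''g$ does not come for free from Proposition~\ref{prop_grouplikecomodule}(1), which only governs right group-likes. I expect to need the autonomy/antipode-free identities relating $\epsilon_s$, $\epsilon_t$, and the base algebras---in particular the anti-isomorphism $\epsilon_t|_{H_s}\colon H_s\to H_t$ from \cite[Lemma~4.1]{Sc03}---to upgrade right group-like to group-like, and to verify that the pushed-forward comodule being the \emph{unit} (not merely a group-like twist) is precisely what forces two-sidedness.
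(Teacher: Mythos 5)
Your proposal follows the paper's proof essentially step for step: the forward direction is the same direct computation using $f(g)=1$ together with the base-algebra isomorphism $f|_{H_s}$ to identify the pushed-forward coaction with that of $\one\in|\sym{M}^{H^\prime}|$, and the converse invokes Proposition~\ref{prop_grouplikecomodule}(1) to produce the right group-like $g=\epsilon(1_0)1_1$ exactly as the paper does. The obstacle you flag---upgrading right group-like to two-sided group-like---is resolved in the paper by precisely the route you anticipate: transporting the coaction to $H_t$ via $\bar\epsilon_s$, the inverse of the anti-isomorphism $\epsilon_t|_{H_s}$, and applying Proposition~\ref{prop_grouplikecomodule}(2) to conclude that the same element $g$ is also left group-like.
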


\begin{proof}
If $N\cong H_s^g$, then the coaction $\beta_N\colon H_s\to H_s\otimes
H$ is given by $\beta_N(x)=x^\prime\otimes(gx^\pprime)$ for all $x\in
H_s$. The push-forward reads $\beta_{f^\ast(N)}=(f|_{H_s}\otimes
f)\circ\beta_N\circ{f|_{H_s}}^{-1}\colon H^\prime_s\to
H^\prime_s\otimes H^\prime$. A direct computation shows that, since
$f(g)=1$, we have $\beta_{f^\ast(N)}(y) = y^\prime\otimes y^\pprime$
for all $y\in H^\prime_s$,
\ie\ $f^\ast(N)\cong\one\in|\sym{M}^{H^\prime}|$.

Conversely, let $f^\ast(N)\cong\one\in|\sym{M}^{H^\prime}|$, \ie\ there
is a coaction $\beta_{f^\ast(N)}\colon H^\prime_s\to H^\prime_s\otimes
H$ such that $y_0\otimes f(y_1)=y^\prime\otimes y^\pprime$ for all
$y\in H^\prime_s$.

First, the original coaction coincides with
$\beta_N=({f|_{H_s}}^{-1}\otimes\id_H)\circ\beta_{f^\ast(N)}\circ
f|_{H_s}\colon H_s\to H_s\otimes H$ which can be shown to satisfy
$x_0\otimes\epsilon_s(x_1)=x^\prime\otimes\epsilon_s(x^\pprime)$ for
all $x\in H_s$. By Proposition~\ref{prop_grouplikecomodule}(1),
$g=(\epsilon\otimes\id_H)\circ\beta_N\circ\eta(1)$ is right group-like
and $\beta_N(x)=x^\prime\otimes(g x^\pprime)$ for all $x\in H_s$.

Second, there is a coaction
$\delta=((\epsilon_t\circ{f|_{H_s}}^{-1})\otimes\id_H)\circ
f|_{H_s}\circ\bar\epsilon_s\colon H_t\to H_t\otimes H$ which can be
shown to satisfy
$z_0\otimes\epsilon_t(z_1)=\epsilon_t(z^\prime)\otimes z^\pprime$ for
all $z\in H_t$. By Proposition~\ref{prop_grouplikecomodule}(2),
$((\epsilon\circ\epsilon_t\circ{f|_{H_s}}^{-1})\otimes\id_H)\circ
f_{H_s}\circ\bar\epsilon_s\circ\eta(1)=g$ is left group-like as well.
\end{proof}

\subsection{Completing the characterization}
\label{sect_dividegplike}

In this subsection, we compute the kernel of the surjection
$\bar\pi\colon H[\sym{G},\sym{E}]\to H$ of
Proposition~\ref{prop_factorendo} and arrive at our characterization
of the universal coend $H=\coend(\sym{C},\omega)$ in
Theorem~\ref{thm_main}. The main technical result is the application
of Theorem~\ref{thm_comodulepush} to the homomorphism of WBAs
$\bar\pi\colon H[\sym{G},\sym{E}]\to H$.

\begin{theorem}
\label{thm_main}
Let $\sym{C}$ be an essentially small, finitely split semisimple,
$k$-linear, additive, autonomous monoidal category such that $k$ is a
field and $\Hom(X,Y)$ is finite-dimensional over $k$ for all
$X,Y\in|\sym{C}|$. We choose an object $M\in|\sym{C}|$ that generates
$\sym{C}$. Let $\sym{G}$ be the dimension graph and
$\sym{E}={(E^{(n)})}_{n\in\N_0}$ be an endomorphism system for
$\sym{C}$ with respect to $M$. We use the map $\bar\pi\colon
H[\sym{G},\sym{E}]\to H$ of Proposition~\ref{prop_factorendo}. Let $G$
be the set
\begin{equation}
\label{eq_setofgplikes}
G=\{\,g\in H[\sym{G},\sym{E}]\mid\quad\mbox{$g$ is group-like and}\quad\bar\pi(g)=1\,\}
\end{equation}
and $I_G$ be the two-sided ideal generated by the set $\{\,g-1\mid\, g\in
G\,\}$. Then $\bar\pi$ induces an isomorphism of WBAs
\begin{equation}
\label{eq_isomorphism}
H[\sym{G},\sym{E}]/I_G\cong H.
\end{equation}
\end{theorem}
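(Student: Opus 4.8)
The plan is to prove the isomorphism~\eqref{eq_isomorphism} by showing that $\ker\bar\pi = I_G$, since $\bar\pi$ is already surjective by Proposition~\ref{prop_factorendo}. The inclusion $I_G\subseteq\ker\bar\pi$ is immediate: each generator $g-1$ of $I_G$ satisfies $\bar\pi(g-1)=\bar\pi(g)-1=1-1=0$ by the defining property of $G$ in~\eqref{eq_setofgplikes}, and $\ker\bar\pi$ is a two-sided ideal. The substance of the proof is the reverse inclusion $\ker\bar\pi\subseteq I_G$, equivalently that $\bar\pi$ descends to an \emph{injective} map on the quotient $H[\sym{G},\sym{E}]/I_G$.

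\medskip

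First I would pass to the level of comodule categories. By Proposition~\ref{prop_schurweyl}(5), $H[\sym{G},\sym{E}]$ is split cosemisimple with simple comodules $V^{(m)}_j$ indexed by a power $m\in\N_0$ and a label $j$ running over the simple $H$-summands of ${(\omega M)}^{\hotimes m}$, and $\bar\pi$ pushes each $V^{(m)}_j$ forward to a genuine simple $H$-comodule. Because both WBAs are direct sums of matrix coalgebras, controlling $\ker\bar\pi$ amounts to controlling which distinct simple $H[\sym{G},\sym{E}]$-comodules $V^{(m)}_j$ and $V^{(m')}_{j'}$ are identified under $\bar\pi$, i.e.\ become isomorphic as $H$-comodules. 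The central tool is Theorem~\ref{thm_comodulepush}, which identifies the $H[\sym{G},\sym{E}]$-comodules pushed forward to the monoidal unit $\omega\one\in|\sym{M}^H|$ as exactly the comodules $H_s^g$ for group-like $g\in H[\sym{G},\sym{E}]$ with $\bar\pi(g)=1$ — precisely the elements of $G$.

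\medskip

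The key reduction is to use the autonomous (WHA) structure to move from the monoidal unit to arbitrary simple comodules. Since $\sym{C}$ is left-autonomous, $H$ is a WHA (Theorem~\ref{thm_tk}), so each $V^{(m)}_j$ has a dual in $\sym{M}^{H[\sym{G},\sym{E}]}$, and by Proposition~\ref{prop_gplike}(2) the comodules $H_s^g$ for $g\in G$ form a group under $\hotimes$. If two simples $V,V'$ of $H[\sym{G},\sym{E}]$ satisfy $\bar\pi_\ast V\cong\bar\pi_\ast V'$, then $V\hotimes {V'}^\ast$ is pushed forward to something containing the unit $\omega\one$, and invoking Theorem~\ref{thm_comodulepush} shows $V\cong H_s^g\hotimes V'$ for some $g\in G$. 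Thus \emph{every} collision between simple comodules under $\bar\pi$ is realized by tensoring with a group-like $H_s^g$, $g\in G$. Dividing by $1-g$ for all $g\in G$ — that is, passing to $H[\sym{G},\sym{E}]/I_G$ — collapses exactly these identifications in the coefficient coalgebras: the quotient by $I_G$ forces $H_s^g\cong\omega\one$, hence identifies $V^{(m)}_j$ with each comodule it is related to via $G$, and nothing more. The resulting comodule category becomes equivalent to $\sym{M}^H\simeq\sym{C}$ on the nose, so the induced surjection is an isomorphism.

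\medskip

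The main obstacle I anticipate is the faithfulness at the level of comodules: showing that $I_G$ accounts for \emph{all} of $\ker\bar\pi$ and not merely a subideal. Concretely, one must verify that after quotienting by $I_G$ the distinct isomorphism types of simple $H[\sym{G},\sym{E}]/I_G$-comodules are in bijection with those of $H$, which requires that the group-like comodules $\{H_s^g\}_{g\in G}$ capture precisely the redundancy in the labelling $(m,j)$ and do not over- or under-collapse. This is where the autonomy hypothesis is essential — it guarantees inverses $g^{-1}=S(g)$ so that $G$ is a genuine group acting freely on isomorphism classes, and it is what lets us reduce all collisions to the monoidal unit via Theorem~\ref{thm_comodulepush}. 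Once the comodule categories of $H[\sym{G},\sym{E}]/I_G$ and $H$ are matched as split cosemisimple categories with the same simple objects and the same coefficient matrix coalgebras, the isomorphism of WBAs follows from reconstruction, $H[\sym{G},\sym{E}]/I_G\cong\coend(\sym{M}^{H[\sym{G},\sym{E}]/I_G},U)\cong\coend(\sym{M}^H,\omega)=H$.
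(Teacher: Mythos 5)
Your overall strategy is the same as the paper's (reduce every collision of simple comodules to the monoidal unit via duals, classify unit-pushforward comodules by Theorem~\ref{thm_comodulepush}, then kill the corresponding group-likes), but the pivotal step is wrong as stated, for two related reasons. First, $\sym{M}^{H[\sym{G},\sym{E}]}$ has no duals and $G$ is not a group: $H[\sym{G},\sym{E}]$ is only a WBA whose algebra is $\N_0$-graded with degrees adding under $\hotimes$, so by Proposition~\ref{prop_schurweyl}(4) comodules in different degrees are never isomorphic; no simple comodule of positive degree can have a dual, and by Proposition~\ref{prop_gplikeunique}(3),(4) the set $G$ is an abelian \emph{monoid} in which every $g\neq 1$ has infinite order. (The formula $g^{-1}=S(g)$ is available in the WHA $H$, not in $H[\sym{G},\sym{E}]$, and Proposition~\ref{prop_gplike}(2) only gives a monoid structure on the $H_s^g$.) Second, your asserted conclusion ``$V\cong H_s^g\hotimes V'$ for some $g\in G$'' is false in general: the hypothesis $\bar\pi_\ast V\cong\bar\pi_\ast V'$ is symmetric in $V,V'$, so applying your claim in both directions would give $V\cong H_s^{g\tilde g}\hotimes V$ and force $g\tilde g=1$, impossible for a nontrivial collision since $G$ has no inverses. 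Concretely, in the $U_q(\ssl_2)$ example take $V=V_0$ in degree $0$ and $V'=V_0^\prime$ in degree $2$: no group-like can lower the degree, so $V_0\not\cong H_s^g\hotimes V_0^\prime$ for any $g$. The jump from ``the unit appears in $\bar\pi_\ast(V\hotimes{V'}^\ast)$'' to your formula is an adjunction argument that needs duality in $\sym{M}^{H[\sym{G},\sym{E}]}$, which is precisely what is unavailable.

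The paper's proof shows how to repair this: one never divides by a group-like comodule, one multiplies both sides into a common degree. Realize the $H$-dual $X^\ast$ of $X=\bar\pi_\ast V$ inside some ${(\omega M)}^{\hotimes t}$ (this is where autonomy of $\sym{C}$, i.e.\ the WHA structure of $H$, is actually used, since $M$ generates), and regard it as an $H[\sym{G},\sym{E}]$-comodule via Proposition~\ref{prop_schurweyl}(3). Extract subcomodules $T\subseteq V\hotimes X^\ast$ and $\tilde T\subseteq V'\hotimes X^\ast$ that push forward to $H_s$; Theorem~\ref{thm_comodulepush} gives $T\cong{H[\sym{G},\sym{E}]}^g_s$ and $\tilde T\cong{H[\sym{G},\sym{E}]}^{\tilde g}_s$ with $g,\tilde g\in G$. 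Then $V\hotimes\tilde T$ and $V'\hotimes T$ lie in the \emph{same} tensor power ${(\omega M)}^{\hotimes(m+\ell+t)}$ and are isomorphic as $H$-comodules, hence as $H[\sym{G},\sym{E}]$-comodules by the same-degree rigidity of Proposition~\ref{prop_schurweyl}(2),(4); passing to $\tilde H=H[\sym{G},\sym{E}]/I_G$, where $T$ and $\tilde T$ become the monoidal unit, yields $V\cong V'$ over $\tilde H$. Finally, your ``and nothing more'' direction (no over-collapsing) should not be deferred to reconstruction: it follows because the canonical projection $p\colon H[\sym{G},\sym{E}]\to\tilde H$ is injective on each matrix coalgebra $C(V)$, since $\bar\pi|_{C(V)}=\tilde\pi\circ p|_{C(V)}$ is injective by the proof of Proposition~\ref{prop_schurweyl}(5); with that in hand, counting matrix coalgebras over each simple $H$-comodule finishes the injectivity of $\tilde\pi$ directly, and the coend identification you invoke becomes unnecessary.
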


\begin{proof}
If $g$ is group-like such that $\bar\pi(g)=1$, the two-sided ideal
generated by $g-1$ is also a two-sided coideal. The quotient $\tilde
H:=H[\sym{G},\sym{E}]/I_G$ is therefore a WBA. The map $\bar\pi\colon
H[\sym{G},\sym{E}]\to H$ obviously factors through this quotient and
yields another surjection of WBAs $\tilde\pi\colon\tilde H\to H$. We
have to show that this map $\tilde\pi$ is injective.

We know from Proposition~\ref{prop_schurweyl}(5) that
$H[\sym{G},\sym{E}]$ is a coproduct of matrix coalgebras. We first
show that $\tilde H$ is a coproduct of matrix coalgebras as well, and
then examine the action of $\tilde\pi$ on these matrix coalgebras in
order to establish the injectivity of $\tilde\pi$. In the following,
we denote by $p$ the canonical projection in the commutative diagram
\begin{equation}
\xymatrix{
H[\sym{G},\sym{E}]\ar[rr]^{p}\ar[ddrr]_{\bar\pi}&&\tilde H=H[\sym{G},\sym{E}]/I_G\ar[dd]^{\tilde\pi}\\
\\
&&H}
\end{equation}
Let $V\in|\sym{M}^{H[\sym{G},\sym{E}]}|$ be simple and
$C(V)=V^\ast\otimes V\subseteq H[\sym{G},\sym{E}]$ be the associated
matrix coalgebra. We know from the proof of
Proposition~\ref{prop_schurweyl}(5) that the restriction
$\bar\pi|_{C(V)}$ is injective, \ie\ $p|_{C(V)}$ is injective as
well. Since $p$ is surjective, $\tilde H$ is spanned by matrix
coalgebras of the form $p(C(V))$,
$V\in|\sym{M}^{H[\sym{G},\sym{E}]}|$.

Let now $W\in|\sym{M}^H|$ be simple and $C(W)\cong W^\ast\otimes
W\subseteq H$. By Proposition~\ref{prop_schurweyl}(5), its pre-image
$\bar\pi^{-1}(C(W))\subseteq H[\sym{G},\sym{E}]$ is a finite direct
sum of matrix coalgebras. For each of these matrix coalgebras
$C(V)=V^\ast\otimes V\subseteq H[\sym{G},\sym{E}]$, the restriction
$p|_{C(V)}$ is injective, and so $p(\bar\pi^{-1}(C(W))$ is a direct
sum of (perhaps a smaller number of) matrix coalgebras. Since $p$ is
surjective, $\tilde\pi^{-1}(C(W))=p(\bar\pi^{-1}(C(W)))$, \ie\ the
pre-image of $C(W)$ under $\tilde\pi$ is a finite direct sum of matrix
coalgebras.  In order to establish that $\tilde\pi$ is injective, it
therefore suffices to show that this finite direct sum consists of one
term only.

Recall that every potential term in the direct sum
$\tilde\pi^{-1}(C(W))$ is of the form $p(V^\ast\otimes V)$ for some
simple $V\in|\sym{M}^{H[\sym{G},\sym{E}]}|$, and so $V$ appears as a
subcomodule of some ${(\omega M)}^{\hotimes m}$, $m\geq 0$. We
therefore need to prove the following:

Let $X\subseteq{(\omega M)}^{\hotimes m}$ and $Y\subseteq{(\omega
M)}^{\hotimes\ell}$, $m,\ell\in\N_0$, be simple right
$H[\sym{G},\sym{E}]$-comodules. If $X\cong Y$ as $H$-comodules (under
push-forward by $\bar\pi$), then $X\cong Y$ as $\tilde H$-comodules
(under push-forward by $p$).

Let $X\cong Y$ be isomorphic as $H$-comodules. Recall that if
$m=\ell$, Proposition~\ref{prop_schurweyl}(4) implies that $X\cong Y$
as $H[\sym{G},\sym{E}]$-comodules and therefore as $\tilde
H$-comodules as well.  We still have to deal with the case
$m\neq\ell$.

Since $H$ is a WHA, $X^\ast\subseteq{(\omega M)}^{\hotimes t}$ for some
$t\in\N_0$. There is an $H$-comodule
\begin{equation}
T\subseteq X\hotimes X^\ast\subseteq{(\omega M)}^{\hotimes(m+t)},
\end{equation}
such that $T\cong H_s$ as $H$-comodules. Since $X\cong Y$ as $H$-comodules,
there is another $H$-comodule
\begin{equation}
\tilde T\subseteq Y\hotimes X^\ast\subseteq {(\omega M)}^{\hotimes(\ell+t)},
\end{equation}
such that $\tilde T\cong H_s$ as $H$-comodules. Both $T$ and $\tilde T$ are
also $H[\sym{G},\sym{E}]$-comodules that are pushed-forward under $\bar\pi$ to
the monoidal unit of $\sym{M}^H$. By Theorem~\ref{thm_comodulepush},
$T\cong{H[\sym{G},\sym{E}]}^g_s$ and $\tilde
T\cong{H[\sym{G},\sym{E}]}^{\tilde g}_s$ for some group-like $g,\tilde g\in
G$. Therefore, $T\cong\tilde H_s\cong\tilde T$ as $\tilde H$-comodules.

On the other hand,
\begin{eqnarray}
X\hotimes\tilde T &\subseteq& X\hotimes(Y\hotimes X^\ast)\subseteq{(\omega M)}^{\hotimes(m+\ell+t)},\\
Y\hotimes       T &\subseteq& Y\hotimes(X\hotimes X^\ast)\subseteq{(\omega M)}^{\hotimes(\ell+m+t)},
\end{eqnarray}
are isomorphic as $H$-comodules, and so by
Proposition~\ref{prop_schurweyl}(2), also as $H[\sym{G},\sym{E}]$-comodules
and therefore as $\tilde H$-comodules. We conclude that as $\tilde H$-comodules,
\begin{equation}
X\cong X\hotimes\tilde H_s\cong X\hotimes\tilde T
\cong Y\hotimes T\cong Y\hotimes \tilde H_s\cong Y.
\end{equation}
\end{proof}

Note that since $H$ is a WHA, the left hand side
of~\eqref{eq_isomorphism} becomes a WHA as well. Finally, we provide
some additional details of the construction.

\subsection{Some further details}

\begin{lemma}
\label{la_comparegplikes}
Under the assumptions of Theorem~\ref{thm_main}, let $g,\tilde g\in
H[\sym{G},\sym{E}]$ be group-like such that $\bar\pi(g)=1=\bar\pi(\tilde
g)$. Let ${H[\sym{G},\sym{E}]}^g_s\subseteq {(\omega M)}^{\hotimes m}$ and
${H[\sym{G},\sym{E}]}^{\tilde g}_s\subseteq {(\omega M)}^{\hotimes n}$ for
some $m,n\in\N_0$. If $m=n$, then $g=\tilde g$.
\end{lemma}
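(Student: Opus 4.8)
The plan is to reduce the equality $g=\tilde g$ to the injectivity of $\bar\pi$ on a single graded component of $H[\sym{G},\sym{E}]$. Concretely, I would first show that the hypothesis ${H[\sym{G},\sym{E}]}^g_s\subseteq{(\omega M)}^{\hotimes m}$ forces $g$ to be homogeneous of degree $m$ (and likewise $\tilde g$ of degree $n$), and then prove that $\bar\pi$ restricts to an \emph{injection} on each homogeneous component ${H[\sym{G},\sym{E}]}_m$. Given these two facts, the hypothesis $m=n$ puts $g$ and $\tilde g$ into the same component, and $\bar\pi(g-\tilde g)=0$ then yields $g=\tilde g$.

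For the homogeneity, I would compute the coefficient coalgebra of ${H[\sym{G},\sym{E}]}^g_s$. By the coaction~\eqref{eq_gplikecomod}, $\beta(x)=x^\prime\otimes(gx^\pprime)$, its coefficients are precisely $g$ times the coefficients of the monoidal unit; since the coefficient coalgebra of the monoidal unit contains $1=\eta(1)$ (pair the counit against the first tensor factor of $\beta_\one$), we obtain $g=g\cdot 1\in C({H[\sym{G},\sym{E}]}^g_s)$. On the other hand, an embedding ${H[\sym{G},\sym{E}]}^g_s\hookrightarrow{(\omega M)}^{\hotimes m}$ of comodules gives $C({H[\sym{G},\sym{E}]}^g_s)\subseteq C({(\omega M)}^{\hotimes m})={H[\sym{G},\sym{E}]}_m$, the last identification being the one established in the proof of Proposition~\ref{prop_schurweyl}(5). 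Hence $g\in{H[\sym{G},\sym{E}]}_m$, and symmetrically $\tilde g\in{H[\sym{G},\sym{E}]}_n$.

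The main step is the injectivity of $\bar\pi$ on ${H[\sym{G},\sym{E}]}_m$. By Proposition~\ref{prop_schurweyl}(5) this component is the coproduct $\bigoplus_{j=1}^{\ell_m}{(V^{(m)}_j)}^\ast\otimes V^{(m)}_j$ of the matrix coalgebras of the pairwise non-isomorphic simple comodules $V^{(m)}_j$ occurring in ${(\omega M)}^{\hotimes m}$. On each summand $\bar\pi$ is injective: it maps $C(V^{(m)}_j)$ onto $C(\bar\pi^\ast V^{(m)}_j)$, and since push-forward along $\bar\pi$ does not change the underlying vector space, these two matrix coalgebras have the same dimension, so the surjection is a bijection (this is the fact already used in the proof of Theorem~\ref{thm_main}). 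Moreover, by Proposition~\ref{prop_schurweyl}(4) the push-forwards $\bar\pi^\ast V^{(m)}_j$ are pairwise non-isomorphic simple $H$-comodules, so their coefficient coalgebras are linearly independent inside the split cosemisimple coalgebra $H$. Thus $\bar\pi|_{{H[\sym{G},\sym{E}]}_m}$ is a direct sum of injections with independent images and is therefore injective.

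Finally, with $m=n$ both $g$ and $\tilde g$ lie in ${H[\sym{G},\sym{E}]}_m$, and $\bar\pi(g-\tilde g)=\bar\pi(g)-\bar\pi(\tilde g)=1-1=0$ forces $g=\tilde g$ by the injectivity just established. I expect the injectivity of $\bar\pi$ per degree to be the crux of the argument, precisely because $\bar\pi$ is \emph{not} globally injective: its kernel is generated by the inhomogeneous elements $g-1$. What saves the degree-wise statement is that the defining relations~\eqref{eq_schurweylrelation} are homogeneous, so comodules living in a fixed tensor power of $M$ are not identified with one another under $\bar\pi$, even though comodules in different powers may be (Proposition~\ref{prop_schurweyl}(4)).
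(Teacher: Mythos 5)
Your proof is correct, but it takes a genuinely different route from the paper's. The paper argues at the level of comodules: with $m=n$, both ${H[\sym{G},\sym{E}]}^g_s$ and ${H[\sym{G},\sym{E}]}^{\tilde g}_s$ push forward under $\bar\pi$ to the monoidal unit of $\sym{M}^H$, so Proposition~\ref{prop_schurweyl}(4) (equal degree) makes them isomorphic already as $H[\sym{G},\sym{E}]$-comodules; Theorem~\ref{thm_comodulepush} is then invoked a second time, with $N={H[\sym{G},\sym{E}]}^{\tilde g}_s$, and its explicit formula $g=(\epsilon\otimes\id)\circ\beta_N\circ\eta(1)=\epsilon(1^\prime)\,\tilde g 1^\pprime=\tilde g$ recovers the group-like from the coaction and forces $g=\tilde g$. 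You instead work at the level of the coalgebra: you first place $g$ and $\tilde g$ in the homogeneous component ${H[\sym{G},\sym{E}]}_m$ via the coefficient-coalgebra computation $g=(\epsilon\otimes\id)\beta_{{H[\sym{G},\sym{E}]}^g_s}(\eta(1))\in C({H[\sym{G},\sym{E}]}^g_s)\subseteq C({(\omega M)}^{\hotimes m})={H[\sym{G},\sym{E}]}_m$, and then prove that $\bar\pi$ restricts to an injection on each ${H[\sym{G},\sym{E}]}_m$ --- a statement the paper never formulates in this generality, although both of your ingredients are available there: injectivity of $\bar\pi|_{C(V)}$ by the dimension count (cited in the proof of Theorem~\ref{thm_main}) and pairwise non-isomorphy of the push-forwards within a fixed degree (Proposition~\ref{prop_schurweyl}(4)), which places the images in distinct matrix-coalgebra summands of the split cosemisimple $H$. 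Each approach buys something: yours isolates the reusable fact $\ker\bar\pi\cap{H[\sym{G},\sym{E}]}_m=\{0\}$ for all $m\in\N_0$, which explains structurally why the kernel is generated by the \emph{inhomogeneous} elements $1-g$, yields the homogeneity of the relevant group-likes as a byproduct, and would prove Proposition~\ref{prop_gplikeunique}(1) directly; it also avoids any appeal to the formula of Theorem~\ref{thm_comodulepush} being determined by the isomorphism class of the comodule, a point on which the paper's shorter argument implicitly leans. The paper's route, in exchange, needs no dimension counting and reuses machinery already set up for the surrounding results.
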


\begin{proof}
Recall from Proposition~\ref{prop_schurweyl}(5) that
$H[\sym{G},\sym{E}]$ is a coproduct of matrix coalgebras
$C(V)=V^\ast\otimes V$ each of which is associated with a subcomodule
$V\subseteq {(\omega M)}^{\hotimes m}$ for some $m\geq 0$, \ie\ all
comodules ${H[\sym{G},\sym{E}]}_s^g$ are subcomodules of the form assumed.

Let now $m=n$. Pushing-forward these comodules along $\bar\pi$ yields
isomorphic $H$-comodules $H^{\pi(g)}_s\cong H_s\cong H^{\pi(\tilde g)}_s$. By
Proposition~\ref{prop_schurweyl}(4),
${H[\sym{G},\sym{E}]}^g_s\cong{H[\sym{G},\sym{E}]}^{\tilde g}_s$ are also
isomorphic as $H[\sym{G},\sym{E}]$-comodules.

We now apply Theroem~\ref{thm_comodulepush} to the homomorphism of WBAs
$\bar\pi\colon H[\sym{G},\sym{E}]\to H$ and choose
$N={H[\sym{G},\sym{E}]}^{\tilde g}$. We know that $N$ pushes forward to the
monoidal unit of $\sym{M}^H$ under $\bar\pi$ and that it is isomorphic as an
$H[\sym{G},\sym{E}]$-comodule to ${H[\sym{G},\sym{E}]}_s^g$ where $g$ is
group-like with $\bar\pi(g)=1$. Theorem~\ref{thm_comodulepush} then gives a
formula for $g$ in terms of the coaction of ${H[\sym{G},\sym{E}]}_s^{\tilde
g}$:
\begin{equation}
g=(\epsilon\otimes\id)\circ\beta_{{H[\sym{G},\sym{E}]}_s^{\tilde
g}}\circ\eta(1)
=\epsilon (1^\prime)(\tilde g 1^\pprime)=\tilde g.
\end{equation}
\end{proof}

\begin{proposition}
Under the assumptions of Theorem~\ref{thm_main}, if in addition
$\sym{C}$ satisfies the strong Schur--Weyl property, then each $g\in
G$ is central in $H[\sym{G},\sym{E}]$ and satisfies $g=X(g)$ where
$X(g)=\epsilon_t(\epsilon_s(g^\prime))g^\pprime\epsilon_s(\epsilon_t(g^\pprime))$. Here,
$X$ acts on homogeneous elements ${[p|q]}_m\in
{H[\sym{G},\sym{E}]}_m$, $m\in\N_0$, as follows:
\begin{equation}
X({[p|q]}_m)=\delta_{\sigma(p),\tau(p)}\delta_{\sigma(q),\tau(q)}{[p|q]}_m.
\end{equation}
Furthermore, using the surjection $\pi_R\colon H[\sym{G},R]\to H$ of
WBAs of Proposition~\ref{prop_coquasi}, each $g\in G_R$ with
\begin{equation}
G_R=\{\,g\in H[\sym{G},R]\mid\quad\mbox{$g$ is group-like and}\quad\pi_R(g)=1\,\}
\end{equation}
is central in $H[\sym{G},R]$.
\end{proposition}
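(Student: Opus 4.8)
The plan is to read off centrality from the coquasi-triangular structure that the strong Schur--Weyl property makes available. Under this hypothesis the Example identifying the endomorphism adapted WBA with the Weak FRT Bialgebra gives $H[\sym{G},\sym{E}]=H[\sym{G},R]$, so $H[\sym{G},\sym{E}]$ carries the universal $r$-form of Proposition~\ref{prop_coquasi} and the two sets of group-likes coincide, $G=G_R$. Hence the first centrality claim and the concluding one about $H[\sym{G},R]$ are a single assertion, and it suffices to prove that a group-like $g$ with $\pi_R(g)=\bar\pi(g)=1$ is central in $H[\sym{G},R]$. Since $\sym{C}$ is braided, the coend $H$ is itself coquasi-triangular, with some $r$-form $r_H$.

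For centrality I would specialise the quasi-cocommutativity relation of Definition~\ref{def_coquasi}(3), $x^\prime y^\prime\,r(x^\pprime\otimes y^\pprime)=r(x^\prime\otimes y^\prime)\,y^\pprime x^\pprime$, to $y=g$. Because $\pi_R$ is a homomorphism of coquasi-triangular WBAs (Proposition~\ref{prop_coquasi}) and $\pi_R(g)=1$, each value $r(a\otimes g)$ equals $r_H(\pi_R(a)\otimes 1)$ and each $r(g\otimes a)$ equals $r_H(1\otimes\pi_R(a))$. The decisive subcomputation is that these unit $r$-values collapse to counital contractions: from properties~(1),(2) together with \eqref{eq_univr1} and \eqref{eq_univr2} of Definition~\ref{def_coquasi} one expresses $r_H(z\otimes 1)$ and $r_H(1\otimes z)$ through $\epsilon$ composed with $\epsilon_s$ and $\epsilon_t$. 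Substituting back, the two $r$-factors in the specialised relation become such contractions, and the weak-counit identities $\epsilon_t(x^\prime)x^\pprime=x=x^\prime\epsilon_s(x^\pprime)$, applied in the presence of the base-algebra legs coming from $\Delta g=g1^\prime\otimes g1^\pprime=1^\prime g\otimes 1^\pprime g$, recombine the surviving Sweedler legs of $x$, turning the left-hand side into $xg$ and the right-hand side into $gx$. This yields $gx=xg$ for every $x$.

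For the identity $g=X(g)$ I would first evaluate the map $X$ of the statement, which on the iterated coproduct $g^\prime\otimes g^\pprime\otimes g^\ppprime$ reads $X(g)=\epsilon_t(\epsilon_s(g^\prime))\,g^\pprime\,\epsilon_s(\epsilon_t(g^\ppprime))$, on a homogeneous basis element ${[p|q]}_m$. Using $\Delta({[p|q]}_m)=\sum_{r}{[p|r]}_m\otimes{[r|q]}_m$ and the formulas for $\epsilon_s,\epsilon_t$ in Proposition~\ref{prop_hg}(1), the first counital factor forces the intermediate path to be $p$ and collapses to $\sum_i{[\sigma(p)|i]}_0$, while the third forces it to be $q$ and collapses to $\sum_j{[j|\tau(q)]}_0$; multiplying these onto ${[p|q]}_m$ and noting that pre- or appending a vertex to a path is non-zero only when the path closes up produces exactly $X({[p|q]}_m)=\delta_{\sigma(p),\tau(p)}\delta_{\sigma(q),\tau(q)}{[p|q]}_m$, the projection onto closed-loop paths. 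It then remains to show that a group-like $g$ is supported on loops, which I would extract by equating coefficients in both group-like forms $\Delta g=g1^\prime\otimes g1^\pprime=1^\prime g\otimes1^\pprime g$ and imposing $\epsilon_s(g)=\epsilon_t(g)=1$; these force every ${[p|q]}_m$ occurring in $g$ to satisfy $\sigma(p)=\tau(p)$ and $\sigma(q)=\tau(q)$, that is $g=X(g)$.

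The step I expect to be the main obstacle is the collapse of $r(a\otimes g)$ and $r(g\otimes a)$ in the weak setting. In an ordinary coquasi-triangular bialgebra one has $\Delta 1=1\otimes 1$ and $r(a\otimes 1)=\epsilon(a)$, so the specialised relation gives centrality at once; here $\Delta 1\neq 1\otimes 1$ and one must carry the source and target base algebras $H_s,H_t$ through every contraction, so that the unit $r$-values are genuine $\epsilon_s,\epsilon_t$-contractions and the recombination of legs requires the weak-counit identities rather than the ordinary counit. By comparison, checking that $X$ is the loop-projection and that group-likes are loop-supported is a finite, if slightly tedious, combinatorial calculation with the explicit structure maps of $H[\sym{G},\sym{E}]$.
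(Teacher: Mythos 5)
There is a genuine gap, and it sits exactly where your two halves meet. For the centrality half: the collapse you flag as the main obstacle actually works out (since $\pi_R$ preserves the $r$-form and $\pi_R(g)=1$, and in $H$ one has base-algebra identities of the type $r(x\otimes z)=\epsilon(xz)$ for $z\in H_t$ and $r(x\otimes s)=\epsilon(sx)$ for $s\in H_s$, one gets $r(a\otimes g)=\epsilon(a)=r(g\otimes a)$ for all $a$). But carrying out the substitution: specializing $x^\prime y^\prime\,r(x^\pprime\otimes y^\pprime)=r(x^\prime\otimes y^\prime)\,y^\pprime x^\pprime$ at $y=g$, expanding $\Delta g=g1^\prime\otimes g1^\pprime=1^\prime g\otimes 1^\pprime g$ and using~\eqref{eq_univr2}, yields $x^\prime\,g\,\epsilon_s(x^\pprime)=\epsilon_t(x^\prime)\,g\,x^\pprime$ --- \emph{not} $xg=gx$. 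To recombine the legs via the weak counit identities $x^\prime\epsilon_s(x^\pprime)=x$ and $\epsilon_t(x^\prime)x^\pprime=x$ you must first move $g$ past $\epsilon_s(x^\pprime)\in H_s$ and $\epsilon_t(x^\prime)\in H_t$; and in $H[\sym{G},\sym{E}]$ commutation with the base algebras is precisely the loop-support condition $g=X(g)$, since for $s_v=\sum_j{[j|v]}_0$ one has $s_v\,{[p|q]}_m=\delta_{v,\tau(q)}{[p|q]}_m$ whereas ${[p|q]}_m\,s_v=\delta_{v,\sigma(q)}{[p|q]}_m$. So your centrality argument silently presupposes your second claim.

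And your proposed proof of the second claim fails: group-likeness together with $\epsilon_s(g)=1=\epsilon_t(g)$ does \emph{not} force loop support. Take $\sym{C}$ the fusion category of $\Z_n$-graded vector spaces with $M$ the generating invertible object; this satisfies the strong Schur--Weyl property, and the dimension graph is the directed $n$-cycle with edges $e_i$ from $i$ to $i+1$. Then $g=\sum_{i,j}{[e_i|e_j]}_1$ is group-like in $H[\sym{G}]$, hence in $H[\sym{G},\sym{E}]$, where it remains nonzero (Proposition~\ref{prop_factorendo}(5)), yet $X(g)=0$ because no $e_i$ is a loop. This does not contradict the proposition, since $\bar\pi(g)\neq 1$ --- but your coefficient argument never invokes $\bar\pi(g)=1$, which is therefore essential and missing; moreover, in the quotient $H[\sym{G},\sym{E}]$ the elements ${[p|q]}_m$ are no longer linearly independent, so ``equating coefficients'' is unavailable in degrees $\geq 2$, exactly where the relevant group-likes live (\eg\ $g_2$ in Section~\ref{sect_example}). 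The paper supplies what is missing by a different mechanism: it compares the coactions of ${H[\sym{G},\sym{E}]}^g_s\hotimes k\sym{G}^1$ and $k\sym{G}^1\hotimes{H[\sym{G},\sym{E}]}^g_s$ pushed forward along $\bar\pi$, and uses $\bar\pi(g)=1$ together with injectivity of $\bar\pi$ in degrees $0$ and $1$ --- this is where simplicity of $\one$ and $M$, \ie\ the strong Schur--Weyl hypothesis, actually enters --- to obtain $g{[q|p]}_1={[q|p]}_1X(g)$ and $g{[j|\ell]}_0={[j|\ell]}_0X(g)$; evaluating the degree-zero relation on $\eta(1)$ gives $g=X(g)$, and then the degree-one relation gives centrality because ${H[\sym{G},\sym{E}]}_1$ generates the algebra. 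Your reduction $G=G_R$ via the FRT identification is fine, but without a substitute for this use of $\bar\pi(g)=1$ and low-degree injectivity, neither half of your proposal goes through.
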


\begin{proof}
We abbreviate $N={H[\sym{G},\sym{E}]}_s$ and $N^g={H[\sym{G},\sym{E}]}^g_s$
and consider the two coactions
\begin{alignat}{2}
\beta_M&\colon M\to M\otimes H[\sym{G},\sym{E}],\quad
p\mapsto&\sum_{q\in\sym{G}^1}q\otimes{[q|p]}_1,\\
\beta_{N^g}&\colon N^g\to N^g\otimes H[\sym{G},\sym{E}],\quad
j\mapsto&\sum_{\ell\in\sym{G}^0}\ell\otimes(g{[\ell|j]}_0),
\end{alignat}
for $p\in\sym{G}^1$, $j\in\sym{G}^0$. Applying
$\id_{N^g}\otimes\id_M\otimes\bar\pi$ to both sides of the following equation
\begin{equation}
\beta_{N^g\hotimes M}(j\otimes p)
= (\sigma^{-1}_{N^g,M}\otimes\id_{H[\sym{G},\sym{E}]})
\circ\beta_{M\hotimes N^g}\circ\sigma_{N^g,M}(j\otimes p),
\end{equation}
exploiting that $\bar\pi(g)=1$ and that $\bar\pi$ is injective on
${H[\sym{G},\sym{E}]}_0\cdot{H[\sym{G},\sym{E}]}_1\subseteq{H[\sym{G},\sym{E}]}_1$
yields
\begin{equation}
\label{eq_central1}
\beta_{N\hotimes M}=(\sigma^{-1}_{N^g,M}\otimes\id_{H[\sym{G},\sym{E}]})\circ\beta_{M\hotimes N}
\circ\sigma_{N^g,M}
\end{equation}
where $\beta_{N\hotimes M}$ and $\beta_{M\hotimes N}$ contain the coaction of
the monoidal unit,
\begin{equation}
\beta_N\colon N^g\to N^g\otimes H[\sym{G},\sym{E}],\quad
j\mapsto\sum_{\ell\in\sym{G}^0}\ell\otimes{[\ell|j]}_0.
\end{equation}
Applying $(-\otimes-\otimes (g\cdot-))$ to~\eqref{eq_central1}, pre- and
post-composing with $\sigma^{-1}_{N^g,M}$ and
$\sigma_{N^g,M}\otimes\id_{H[\sym{G},\sym{E}]}$, and then pre- and
post-composing with $\rho_M$ and $\rho_M^{-1}$, respectively, allows us to
compute
\begin{equation}
\label{eq_centralstep}
\sum_{q\in\sym{G}^1}q\otimes(g{[q|p]}_1) = \sum_{q\in\sym{G}^1}q\otimes({[q|p]}_1X(g)).
\end{equation}

Now we repeat all of the above argument for
$\one={H[\sym{G},\sym{E}]}_s$ rather than $M$. In this case, we
exploit the fact that $\bar\pi$ is injective on
${H[\sym{G},\sym{E}]}_0\cdot{H[\sym{G},\sym{E}]}_0\subseteq{H[\sym{G},\sym{E}]}_0$
and obtain that for all $\ell\in\sym{G}^0$:
\begin{equation}
\sum_{j\in\sym{G}^0}j\otimes(g{[j|\ell]}_0) = \sum_{j\in\sym{G}^0}j\otimes({[j|\ell]}_0X(g)).
\end{equation}
Since $\sym{C}$ satisfies the strong Schur--Weyl property, $\one$ is
simple and therefore the ${[j|\ell]}_0$, $j,\ell\in\sym{G}^0$, form a basis
of ${(\omega M)}^{\hotimes 0}$, and so comparing coefficients yields
\begin{equation}
g{[j|\ell]}_0 = {[j|\ell]}_0X(g)
\end{equation}
for all $j,\ell\in\sym{G}^0$. Since
$\eta(1)\in{H[\sym{G},\sym{E}]}_0$, we conclude that $g=X(g)$.

Finally, since $M$ is simple, the ${[p|q]}_1$, $p,q\in\sym{G}^1$, form
a basis of $\omega M$, and so we can compare coefficients
in~\eqref{eq_centralstep} and find that
\begin{equation}
g{[p|q]}_1 = {[p|q]}_1g
\end{equation}
for all $p,q\in\sym{G}^1$. Since $H[\sym{G},\sym{E}]$ as an algebra is
generated by ${H[\sym{G},\sym{E}]}_1$, $g$ is central. The argument
for $G_R$ and $\pi_R$ is identical.

In order to compute $X(g)$ on homogeneous elements, we note that
\begin{eqnarray}
\epsilon_s({[p|q]}_m)
&=& \delta_{pq}\sum_{j\in\sym{G}^0} {[j|\sigma(p)]}_0,\\
\epsilon_t({[p|q]}_m)
&=& \delta_{pq}\sum_{j\in\sym{G}^0} {[\tau(q)|j]}_0,\\
\epsilon_t(\epsilon_s({[p|q]}_m)
&=& \delta_{pq}\sum_{j\in\sym{G}^0} {[\sigma(p)|j]}_0,\\
\epsilon_s(\epsilon_t({[p|q]}_m)
&=& \delta_{pq}\sum_{j\in\sym{G}^0} {[j|\tau(q)]}_0,
\end{eqnarray}
from which the claim follows in a direct computation.
\end{proof}

The following proposition is useful if one wishes to determine the
relevant group-like elements for a given category $\sym{C}$.

\begin{proposition}
\label{prop_gplikeunique}
Under the assumptions of Theorem~\ref{thm_main} with the set $G$
of~\eqref{eq_setofgplikes}, the following holds.
\begin{myenumerate}
\item
For each $m\in\N_0$, the set $G\cap{H[\sym{G},\sym{E}]}_m$ contains at
most one element which we denote by $g_m$ if it exists.
\item
For the elements of Part~(1), we have $g_mg_\ell=g_{m+\ell}$ for all $m,\ell$.
\item
$G$ is an abelian monoid.
\item
Each $g\in G$, $g\neq 1$, has infinite order.
\end{myenumerate}
\end{proposition}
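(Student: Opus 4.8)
The plan is to reduce all four parts to the behaviour of \emph{homogeneous} group-like elements, using that $H[\sym{G},\sym{E}]$ is graded as an algebra with a homogeneous defining ideal (Proposition~\ref{prop_hg}(4) and the remark after Definition~\ref{def_schurweyladapted}). First I record that $G$ is a submonoid: group-like elements are closed under multiplication and contain $1$, and since $\bar\pi$ is a homomorphism of WBAs, $\bar\pi(gh)=\bar\pi(g)\bar\pi(h)=1$ for $g,h\in G$ while $\bar\pi(1)=1$. The two workhorses are the comodule $H_s^g$ of Proposition~\ref{prop_gplike} and the uniqueness statement Lemma~\ref{la_comparegplikes}.

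The decisive step, which I expect to be the main obstacle, is to show that \emph{every} $g\in G$ is homogeneous. The coaction of $H_s^g$ is $x\mapsto x^\prime\otimes(gx^\pprime)$ with $x^\pprime$ of degree $0$; taking $x=1$ shows that $g$ itself occurs among the coefficients, so $g\in C(H_s^g)$. Now $H[\sym{G},\sym{E}]$ is cosemisimple, with simple comodules exactly the subcomodules of the ${(\omega M)}^{\hotimes m}$, each confined to a single graded component (Proposition~\ref{prop_schurweyl}(5)); hence $H_s^g$ is a direct sum of such simples. Push-forward by $\bar\pi$ preserves the underlying vector space, so it annihilates no summand, and it carries $H_s^g$ to the monoidal unit $\one=H_s$ of $\sym{M}^H$. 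Since $\sym{C}$ is a fusion category with simple unit, $\one$ is a simple $H$-comodule, and a direct sum of nonzero comodules that is isomorphic to a simple one must have a single summand. Therefore $H_s^g$ is simple and confined to one component ${H[\sym{G},\sym{E}]}_m$, and because $g\in C(H_s^g)$ we conclude $g\in{H[\sym{G},\sym{E}]}_m$. The only subtle ingredient here is the simplicity of $\one$; in a genuinely multi-fusion $\sym{C}$, where $\one$ splits into several simple comodules, one has to track these summands individually through the tensor-product identity $H_s^g\hotimes H_s^h\cong H_s^{gh}$ of Proposition~\ref{prop_gplike}(2).

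With this in place, Parts~(1) and~(2) are short. For Part~(1), if $g,\tilde g\in G\cap{H[\sym{G},\sym{E}]}_m$, then by the previous paragraph both $H_s^g$ and $H_s^{\tilde g}$ are simple subcomodules of the \emph{same} tensor power ${(\omega M)}^{\hotimes m}$ (their coefficients lie in degree $m$), so Lemma~\ref{la_comparegplikes} forces $g=\tilde g$. For Part~(2), the product $g_mg_\ell$ lies in ${H[\sym{G},\sym{E}]}_{m+\ell}$ because the algebra is graded, is again group-like, and satisfies $\bar\pi(g_mg_\ell)=1$; hence $g_mg_\ell\in G\cap{H[\sym{G},\sym{E}]}_{m+\ell}$, which by Part~(1) equals $\{g_{m+\ell}\}$, giving $g_mg_\ell=g_{m+\ell}$.

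Finally, by the homogeneity step every element of $G$ is some $g_m$. For Part~(3), $g_mg_\ell=g_{m+\ell}=g_\ell g_m$, so $G$ is an abelian monoid. For Part~(4), note that $1\in G\cap{H[\sym{G},\sym{E}]}_0$, so $g_0=1$ by Part~(1); thus any $g\in G$ with $g\neq1$ is homogeneous of some degree $m\geq1$. Iterating Part~(2) gives $g^k=g_{km}$, and since group-like elements are nonzero (as $\epsilon_s(g)=1$) and the $g_{km}$ lie in pairwise distinct degrees for distinct $k$, they are pairwise distinct; hence $g$ has infinite order.
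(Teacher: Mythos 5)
Your treatment of Parts~(1) and~(2) is essentially the paper's own proof: homogeneous $g,\tilde g\in G\cap{H[\sym{G},\sym{E}]}_m$ give comodules $H_s^g$, $H_s^{\tilde g}$ realizable inside the same ${(\omega M)}^{\hotimes m}$, so Lemma~\ref{la_comparegplikes} forces $g=\tilde g$; the graded-product argument for~(2) is identical; and deriving~(3) and~(4) from~(1) and~(2) is what the paper does in one line each. What you add --- and what the paper leaves implicit, tucked into the first sentence of the proof of Lemma~\ref{la_comparegplikes} (``all comodules $H_s^g$ are subcomodules of the form assumed'') --- is the explicit reduction of~(3) and~(4) to the claim that \emph{every} $g\in G$ is homogeneous, together with a correct proof of that claim when $\one$ is simple: $g\in C(H_s^g)$ via $g=\epsilon(1^\prime)g1^\pprime$, push-forward along $\bar\pi$ preserves the underlying vector space and sends $H_s^g$ to the simple unit $\omega\one$, so $H_s^g$ is simple and its coefficient coalgebra lies in a single homogeneous component by Proposition~\ref{prop_schurweyl}(5). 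That is a genuine sharpening of the paper's exposition. (Your invocation of simplicity in Part~(1) is slight overkill --- the paper only needs $H_s^g$, $H_s^{\tilde g}$ to be subcomodules of the same tensor power --- but it is harmless.)

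The caveat you flag is real, and you are right not to wave it away: Theorem~\ref{thm_main} does not assume $\one$ simple, and in a genuinely multi-fusion $\sym{C}$ the homogeneity claim can actually \emph{fail}, not merely resist your argument. For instance, in the multi-fusion category of $2\times2$ matrices of vector spaces with generator $M=E_{12}\oplus E_{21}$, one has $M^{\otimes 2}\cong\one=E_{11}\oplus E_{22}$, and the $H[\sym{G},\sym{E}]$-comodule $W^{(0)}_{11}\oplus W^{(2)}_{22}$ (one unit-block constituent taken in degree $0$, the other in degree $2$) pushes forward to a comodule isomorphic to $\omega\one$, so Theorem~\ref{thm_comodulepush} produces a non-homogeneous $g\in G$. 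Hence in full generality Parts~(3) and~(4) require exactly the blockwise refinement you sketch: assign to $g$ the vector of degrees of the constituents of $H_s^g$ lying over the pairwise non-isomorphic (multiplicity-free) simple summands of $\omega\one$; the identity $H_s^g\hotimes H_s^h\cong H_s^{gh}$ of Proposition~\ref{prop_gplike}(2) makes this assignment additive, and a summand-by-summand version of Lemma~\ref{la_comparegplikes} (same proof, using Proposition~\ref{prop_schurweyl}(4) blockwise and the formula of Theorem~\ref{thm_comodulepush}) makes it injective, embedding $G$ into $\N_0^b$ and yielding commutativity and infinite order. Note that the paper's own one-line proofs of~(3) and~(4) silently rely on the same homogeneity claim, so on this point your proposal is, if anything, more careful than the original; completing the multi-fusion case along the lines you indicate would close the gap in both.
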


\begin{proof}
\begin{myenumerate}
\item
Let $g,g^\prime\in G\cap{H[\sym{G},\sym{E}]}_m$ for some
$m\in\N_0$. Then by Theorem~\ref{thm_comodulepush} and by the
grading of $H[\sym{G},\sym{E}]$, both
${H[\sym{G},\sym{E}]}^g_s\subseteq{(\omega M)}^{\hotimes m}$ and
${H[\sym{G},\sym{E}]}^{g^\prime}_s\subseteq {(\omega M)}^{\hotimes
m}$. Lemma~\ref{la_comparegplikes} implies $g=g^\prime$.
\item
By the grading, $g_mg_\ell\in{H[\sym{G},\sym{E}]}_{m+\ell}$ and $g_mg_\ell$
is group-like with $\bar\pi(g_mg_\ell)=1$. Part~(1) then implies the claim.
\item
Because $\bar\pi$ is a homomorphism of unital associative algebras and
because of Part~(2).
\item
Because of Parts~(1) and~(2) and the fact that the unit is in degree
zero, $1\in{H[\sym{G},\sym{E}]}_0$.
\end{myenumerate}
\end{proof}

\begin{proposition}
Under the assumptions of Theorem~\ref{thm_main}, if $\sym{C}$ is
braided, the WBA $H[\sym{G},\sym{E}]/I_G$ is coquasi-triangular with
the universal $r$-form induced from $H[\sym{G},\sym{E}]$, and the
isomorphism~\eqref{eq_isomorphism} is an isomorphism of
coquasi-triangular WBAs.
\end{proposition}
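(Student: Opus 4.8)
The plan is to obtain the coquasi-triangular structure on $\tilde H:=H[\sym{G},\sym{E}]/I_G$ not by a direct computation on the generators $g-1$ of $I_G$, but by transporting the structure of $H$ back along the isomorphism $\tilde\pi\colon\tilde H\to H$ of Theorem~\ref{thm_main}, and then matching the result with the $r$-form induced from $H[\sym{G},\sym{E}]$ through a short diagram chase. I will work throughout with the commutative diagram from the proof of Theorem~\ref{thm_main}, in which $\tilde\pi\circ p=\bar\pi$ for the canonical projection $p\colon H[\sym{G},\sym{E}]\to\tilde H$, together with the factorization $\pi_R=\bar\pi\circ\pi^\prime$ of Proposition~\ref{prop_coquasi} through the surjection $\pi^\prime\colon H[\sym{G},R]\to H[\sym{G},\sym{E}]$ of Corollary~\ref{cor_coquasi}.

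First I would collect the coquasi-triangular data. Since $\sym{C}$ is braided, $H$ is coquasi-triangular with a universal $r$-form $r^{(H)}$ by the remark following Theorem~\ref{thm_tk}. Proposition~\ref{prop_coquasi} supplies the $r$-form $r_R$ on $H[\sym{G},R]$ together with the fact that $\pi_R$ is a homomorphism of coquasi-triangular WBAs, \ie\ $r^{(H)}\circ(\pi_R\otimes\pi_R)=r_R$, while Corollary~\ref{cor_coquasi} gives an $r$-form $r_\sym{E}$ on $H[\sym{G},\sym{E}]$ induced along $\pi^\prime$, \ie\ $r_\sym{E}\circ(\pi^\prime\otimes\pi^\prime)=r_R$. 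Substituting $\pi_R=\bar\pi\circ\pi^\prime$ into the first identity, comparing with the second, and using that $\pi^\prime$ is surjective, I would conclude that $\bar\pi$ is itself a homomorphism of coquasi-triangular WBAs,
\begin{equation}
r^{(H)}\circ(\bar\pi\otimes\bar\pi)=r_\sym{E}.
\end{equation}

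Next I would transport the structure: because $\tilde\pi$ is an isomorphism of WBAs and $(H,r^{(H)})$ is coquasi-triangular, the pullback $\tilde r:=r^{(H)}\circ(\tilde\pi\otimes\tilde\pi)$ is a universal $r$-form on $\tilde H$, with inverse form $\bar r^{(H)}\circ(\tilde\pi\otimes\tilde\pi)$; indeed each axiom of Definition~\ref{def_coquasi} is an identity of multilinear maps that is preserved under pulling back along a WBA isomorphism. By construction $\tilde\pi$ intertwines $\tilde r$ and $r^{(H)}$, so it is already an isomorphism of coquasi-triangular WBAs. It then remains to identify $\tilde r$ with the form induced from $H[\sym{G},\sym{E}]$: for all $y,z\in H[\sym{G},\sym{E}]$, using $\tilde\pi\circ p=\bar\pi$ and the identity just established,
\begin{equation}
\tilde r\bigl(p(y)\otimes p(z)\bigr)=r^{(H)}\bigl(\bar\pi(y)\otimes\bar\pi(z)\bigr)=r_\sym{E}(y\otimes z),
\end{equation}
so that $r_\sym{E}$ factors through $p\otimes p$ with descent $\tilde r$.

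The step I expect to be the only real obstacle is the well-definedness of the $r$-form induced from $H[\sym{G},\sym{E}]$, \ie\ that $r_\sym{E}$ vanishes on $I_G\otimes H[\sym{G},\sym{E}]$ and on $H[\sym{G},\sym{E}]\otimes I_G$; a direct check would require controlling $r_\sym{E}\bigl((g-1)\otimes x\bigr)$ and $r_\sym{E}\bigl(x\otimes(g-1)\bigr)$ for the group-like generators $g\in G$ of $I_G$. The transport argument above sidesteps this entirely: the last display shows that the pullback of the genuinely well-defined form $\tilde r$ along $p$ equals $r_\sym{E}$, which forces $r_\sym{E}$ to vanish on $I_G$ in each slot for free, by surjectivity of $\pi^\prime$ and commutativity of the diagram. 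This simultaneously yields that $\tilde H$ is coquasi-triangular with the $r$-form induced from $H[\sym{G},\sym{E}]$ and that \eqref{eq_isomorphism} is an isomorphism of coquasi-triangular WBAs.
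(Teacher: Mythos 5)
Your proof is correct, but it takes a genuinely different route from the paper's. The paper works directly at the level of the quotient map: it verifies that the $r$-form and its weak convolution inverse vanish when one argument is a generator $1-g$, $g\in G$, of $I_G$ (using that $\bar\pi$ is a homomorphism of coquasi-triangular WBAs, that $\bar\pi(g)=1$, and that $r(1,-)=\epsilon(-)=r(-,1)$), and then extends this vanishing from the generators to the whole two-sided ideal via the multiplicativity axioms \eqref{eq_univr1} and \eqref{eq_univr2}; descent to $H[\sym{G},\sym{E}]/I_G$ then produces the induced structure, and the isomorphism is compatible because $\bar\pi$ pushes forward all relevant comodules. You instead take the isomorphism $\tilde\pi$ of Theorem~\ref{thm_main} as input, pull $r^{(H)}$ back to $\tilde H$, and check that the pullback along $p$ recovers $r_\sym{E}$, which makes the well-definedness of the induced form automatic: once $r_\sym{E}=r^{(H)}\circ(\bar\pi\otimes\bar\pi)$ is in hand, $r_\sym{E}$ kills $\ker\bar\pi\supseteq I_G$ in either slot, with no generator computation and no appeal to \eqref{eq_univr1} and \eqref{eq_univr2} (whose analogues for $\bar r$ the paper's extension step tacitly needs). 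A further merit of your version is that you actually derive the fact, invoked but not proved in the paper, that $\bar\pi$ is a homomorphism of coquasi-triangular WBAs, by composing the compatibilities of Proposition~\ref{prop_coquasi} and Corollary~\ref{cor_coquasi} and cancelling the surjection $\pi^\prime$. There is no circularity in using Theorem~\ref{thm_main}, since its proof concerns only the WBA structure. One point you should make explicit rather than gesture at: the weak convolution inverse is part of the coquasi-triangular data and is not obviously determined by $r$ alone in the weak setting, so you need the parallel identity $\bar r_\sym{E}=\bar r^{(H)}\circ(\bar\pi\otimes\bar\pi)$ (obtained by the same surjectivity-of-$\pi^\prime$ argument) to conclude that the descended $\bar r$ exists and matches the pullback $\bar r^{(H)}\circ(\tilde\pi\otimes\tilde\pi)$; with that one line added, your argument is complete.
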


\begin{proof}
The coquasi-triangular structure of $H[\sym{G},\sym{E}]$ descends to the
quotient $H[\sym{G},\sym{E}]/I_G$ provided that the universal $r$-form $r\colon
H[\sym{G},\sym{E}]\otimes H[\sym{G},\sym{E}]\to k$ and its weak convolution
inverse $\bar r\colon H[\sym{G},\sym{E}]\otimes H[\sym{G},\sym{E}]\to k$
satisfy
\begin{equation}
r(I_G,H[\sym{G},\sym{E}])=0=r(H[\sym{G},\sym{E}],I_G),\qquad
\bar r(I_G,H[\sym{G},\sym{E}])=0=\bar r(H[\sym{G},\sym{E}],I_G).
\end{equation}
This holds on the generators $1-g$, $g\in G$, of $I_G$ because $\bar\pi$ is a
homomorphism of coquasi-triangular WBAs, $\bar\pi(g)=1$ and
$r(1,-)=\epsilon(-)=r(-,1)$ and $\bar r(1,-)=\epsilon(-)=\bar r(-,1)$. It
extends to the two-sided ideal by~\eqref{eq_univr1} and~\eqref{eq_univr2}. The
isomorphism~\eqref{eq_isomorphism} is one of coquasi-triangular WHAs because
$\bar\pi$ pushes forward all relevant comodules.
\end{proof}

\section{Examples}
\label{sect_example}

\subsection{The modular categories associated with $U_q(\mathfrak{sl}_2)$}

In this section, we review the modular categories associated with
$U_q(\mathfrak{sl}_2)$ at suitable roots of unity following~\cite{KaLi94} and
present them as the categories of finite-dimensional comodules of a WHA
$H[\sym{G},\sym{E}]/I_G$.

\subsubsection{Description of the categories}

Let $r\in\{2,3,4,\ldots\}$ and $A$ be a primitive $4r$-th root of unity,
$q=A^2$. For simplicity, we work over the complex numbers $k=\C$. The
morphisms of our category $\sym{C}$ are represented by plane projections of
oriented framed tangles, drawn in blackboard framing. The coherence theorem
for ribbon categories~\cite{ReTu90} ensures that each diagram defines a
morphism of $\sym{C}$. Since $\sym{C}$ is $k$-linear, we can take formal
linear combinations of diagrams with coefficients in $k$. All our diagrams are
read from top to bottom.

The braiding of $\sym{C}$ is such that a crossing in our plane projections can
be resolved using the recursion relation for the Kauffman bracket
\begin{equation}
\begin{xy}
(-4,4);(4,-4) **\dir{-};
(-4,-4);(-1,-1) **\dir{-};
(1,1);(4,4) **\dir{-}
\end{xy}
= A\,
\begin{xy}
(-4,4);(4,4) **\crv{(-2,2)&(2,2)};
(-4,-4);(4,-4) **\crv{(-2,-2)&(2,-2)};
\end{xy}
+ A^{-1}\,
\begin{xy}
(4,-4);(4,4) **\crv{(2,-2)&(2,2)};
(-4,-4);(-4,4) **\crv{(-2,-2)&(-2,2)};
\end{xy}\,,\qquad\qquad
\begin{xy}
(0,0)*\xycircle<3mm,3mm>{-};
\end{xy}
= -(q+q^{-1}),
\end{equation}
ignoring the orientations for now. The Jones--Wenzl idempotents $P_n$,
$1\leq n\leq r-2$, are formal linear combinations of planar
$(n,n)$-tangles that can be defined recursively by
\begin{equation}
\begin{xy}
(0,0)*+{\hbox to 10mm{\strut\hfill$P_1$\hfill}}*\frm{-}!U="t" !D="b";
"t";(0, 12) **\dir{-};
"b";(0,-12) **\dir{-};
\end{xy}
:=
\begin{xy}
(0,-12);(0,12) **\dir{-};
\end{xy}\,,\qquad\qquad
\begin{xy}
(0,0)*+{\hbox to 10mm{\strut\hfill$P_{n+1}$\hfill}}*\frm{-}!U="t" !D="b";
"t"+<-4mm,0mm>;(-4, 12) **\dir{-};
"b"+<-4mm,0mm>;(-4,-12) **\dir{-};
"t"+<-3mm,0mm>;(-3, 12) **\dir{-};
"b"+<-3mm,0mm>;(-3,-12) **\dir{-};
"t"+<-2mm,0mm>;(-2, 12) **\dir{-}; ?(.5)+<3mm,0mm>*{\cdots};
"b"+<-2mm,0mm>;(-2,-12) **\dir{-}; ?(.5)+<3mm,0mm>*{\cdots};
"t"+< 4mm,0mm>;( 4, 12) **\dir{-};
"b"+< 4mm,0mm>;( 4,-12) **\dir{-};
\end{xy}
:=
\begin{xy}
(0,0)*+{\hbox to 10mm{\strut\hfill$P_n$\hfill}}*\frm{-}!U="t" !D="b";
"t"+<-4mm,0mm>;(-4, 12) **\dir{-};
"b"+<-4mm,0mm>;(-4,-12) **\dir{-};
"t"+<-3mm,0mm>;(-3, 12) **\dir{-};
"b"+<-3mm,0mm>;(-3,-12) **\dir{-};
"t"+<-2mm,0mm>;(-2, 12) **\dir{-}; ?(.5)+<3mm,0mm>*{\cdots};
"b"+<-2mm,0mm>;(-2,-12) **\dir{-}; ?(.5)+<3mm,0mm>*{\cdots};
"t"+< 4mm,0mm>;( 4, 12) **\dir{-};
"b"+< 4mm,0mm>;( 4,-12) **\dir{-};
(8,-12);(8,12) **\dir{-};
\end{xy}
+\frac{\q{n}}{\q{n+1}}\,
\begin{xy}
(0, 6)*+{\hbox to 10mm{\strut\hfill$P_n$\hfill}}*\frm{-}!U="t1" !D="b1";
(0,-6)*+{\hbox to 10mm{\strut\hfill$P_n$\hfill}}*\frm{-}!U="t2" !D="b2";
"t1"+<-4mm,0mm>;(-4, 12)        **\dir{-};
"b1"+<-4mm,0mm>;"t2"+<-4mm,0mm> **\dir{-};
"b2"+<-4mm,0mm>;(-4,-12)        **\dir{-};
"t1"+<-3mm,0mm>;(-3, 12)        **\dir{-}; ?(.5)+<2.5mm,0mm>*{\cdots};
"b1"+<-3mm,0mm>;"t2"+<-3mm,0mm> **\dir{-}; ?(.5)+<2.5mm,0mm>*{\cdots};
"b2"+<-3mm,0mm>;(-3,-12)        **\dir{-}; ?(.5)+<2.5mm,0mm>*{\cdots};
"t1"+< 3mm,0mm>;( 3, 12)        **\dir{-};
"b1"+< 3mm,0mm>;"t2"+< 3mm,0mm> **\dir{-};
"b2"+< 3mm,0mm>;( 3,-12)        **\dir{-};
"t1"+< 4mm,0mm>;( 4, 12)        **\dir{-};
"b2"+< 4mm,0mm>;( 4,-12)        **\dir{-};
"b1"+< 4mm,0mm>;( 8, 12) **\crv{"b1"+<4mm,-2mm>&"b1"+<8mm,-2mm>&"b1"+<8mm,0mm>};
"t2"+< 4mm,0mm>;( 8,-12) **\crv{"t2"+<4mm, 2mm>&"t2"+<8mm, 2mm>&"t2"+<8mm,0mm>};
\end{xy}\,.
\end{equation}
where $\q{n}=(q^n-q^{-n})/(q-q^{-1})$, $n\in\Z$, are the quantum integers. The
isomorphism classes of simple objects of $\sym{C}$ are indexed by the set
$I=\{0,1,\ldots,r-2\}$. The identity morphism of the object $V_n$, $n\in I$,
is the identity $(n,n)$-tangle with the idempotent $P_n$ inserted somewhere
(anywhere). As a shortcut, we write a single line labeled by $n$,
\begin{equation}
\begin{xy}
(0,-8);(0,8) **\dir{-} ?(.8)+<2mm,0mm>*{n};
\end{xy}
:=
\begin{xy}
(0,0)*+{\hbox to 10mm{\strut\hfill$P_n$\hfill}}*\frm{-}!U="t" !D="b";
"t"+<-4mm,0mm>;(-4, 8) **\dir{-};
"b"+<-4mm,0mm>;(-4,-8) **\dir{-};
"t"+<-3mm,0mm>;(-3, 8) **\dir{-};
"b"+<-3mm,0mm>;(-3,-8) **\dir{-};
"t"+<-2mm,0mm>;(-2, 8) **\dir{-}; ?(.5)+<3mm,0mm>*{\cdots};
"b"+<-2mm,0mm>;(-2,-8) **\dir{-}; ?(.5)+<3mm,0mm>*{\cdots};
"t"+< 4mm,0mm>;( 4, 8) **\dir{-};
"b"+< 4mm,0mm>;( 4,-8) **\dir{-};
\end{xy}\,.
\end{equation}
The object $V_0$ indexed by $0\in I$ is the monoidal unit and can be made
invisible in our diagrams thanks to the coherence theorem. The categorical
dimension of the simple objects is given by
\begin{equation}
\Delta_n :=
\begin{xy}
(0,0)*\xycircle<3mm,3mm>{-};
(3.5,3.5)*{n}
\end{xy}
= (-1)^n\q{n+1},
\end{equation}
which is non-zero for all $n\in I$.

Two special features of $U_q(\ssl_2)$ are exploited. First, the simple objects
are isomorphic to their duals, and the choice of representatives $V_j$, $j\in
I$, of the simple objects is such that ${(V_j)}^\ast=V_j$ are equal rather
than merely isomorphic. This allows us to omit any arrows from the diagrams
that would indicate the orientation of the ribbon tangle.

Second, there are no higher multiplicities, \ie\ for all $a,b,c\in I$,
we have $\dim_k\Hom(V_a\otimes V_b,V_c)\in\{0,1\}$. More precisely,
$\Hom(V_a\otimes V_b,V_c)\cong k$ if and only if the triple $(a,b,c)$
is \emph{admissible}. Otherwise, $\Hom(V_a\otimes V_b,V_c)=\{0\}$.

\begin{definition}
A triple $(a,b,c)\in I^3$ is called \emph{admissible} if the following
conditions hold.
\begin{myenumerate}
\item
$a+b+c\equiv 0$ mod $2$ (\emph{parity}),
\item
$a+b-c\geq 0$ and $b+c-a\geq 0$ and $c+a-b\geq 0$ (\emph{quantum triangle inequality}),
\item
$a+b+c\leq 2r-4$ (\emph{non-negligibility}).
\end{myenumerate}
\end{definition}

A special choice of basis vector of $\Hom(V_a,V_b\otimes V_c)$ is
denoted by a trivalent vertex:
\begin{equation}
\begin{xy}
(0,0)*\dir{*};
(0,0);(0,8) **\dir{-}; ?(.7)+<3mm,0mm>*{a};
(0,0);(-4,-8) **\crv{(-4,-4)&(-4,-6)}; ?(.7)+<-3mm,0mm>*{b};
(0,0);( 4,-8) **\crv{( 4,-4)&( 4,-6)}; ?(.7)+< 3mm,0mm>*{c};
\end{xy}
:=
\begin{xy}
(-2,3);( 2,3) **\dir{-};
( 2,3);( 2,5) **\dir{-};
( 2,5);(-2,5) **\dir{-};
(-2,5);(-2,3) **\dir{-};
( 0,5);( 0,8) **\dir{-}; ?(.5)+<3mm,0mm>*{a};
(-6,-5);(-2,-5) **\dir{-};
(-2,-5);(-2,-3) **\dir{-};
(-2,-3);(-6,-3) **\dir{-};
(-6,-3);(-6,-5) **\dir{-};
(-4,-5);(-4,-8) **\dir{-}; ?(.5)+<-3mm,0mm>*{b};
( 2,-5);( 6,-5) **\dir{-};
( 6,-5);( 6,-3) **\dir{-};
( 6,-3);( 2,-3) **\dir{-};
( 2,-3);( 2,-5) **\dir{-};
( 4,-5);( 4,-8) **\dir{-}; ?(.5)+<3mm,0mm>*{c};
(-1, 3);(-5,-3) **\crv{(-1,0)&(-5,0)}; ?(.5)+<-3mm,0mm>*{i};
( 1, 3);( 5,-3) **\crv{( 1,0)&( 5,0)}; ?(.5)+< 3mm,0mm>*{j};
(-3,-3);( 3,-3) **\crv{(-3,0)&( 3,0)}; ?(.5)+<0mm,-2mm>*{k};
\end{xy}\,,
\end{equation}
where $i=(a+b-c)/2$, $j=(a+c-b)/2$ and $k=(b+c-a)/2$ and the boxes
denote Jones--Wenzl idempotents. If we draw such a diagram for a
triple $(a,b,c)\in I^3$ that is not admissible, then by convention, we
multiply the entire diagram by zero. We also need the theta graph
\begin{equation}
\theta(a,b,c) :=
\begin{xy}
(-5,0)*\dir{*};
(5,0)*\dir{*};
(-5,0);(5,0) **\crv{(-5,5)&(5,5)}; ?(.5)+<0mm,2mm>*{a};
(-5,0);(5,0) **\dir{-}; ?(.5)+<0mm,2mm>*{b};
(-5,0);(5,0) **\crv{(-5,-5)&(5,-5)}; ?(.5)+<0mm,-2mm>*{c};
\end{xy}\,,
\end{equation}
which is non-zero for all admissible triples $(a,b,c)$. When we compose the
morphisms associated with such diagrams, the composition is zero unless the
labels at the open ends of the tangles match, \ie\ putting
\begin{equation}
\begin{xy}
(0,0)*\dir{*};
(0,0);(-4, 4) **\dir{-}; ?(.8)+<-3mm,0mm>*{r};
(0,0);( 4, 4) **\dir{-}; ?(.8)+< 3mm,0mm>*{j};
(0,0);( 0,-6) **\dir{-}; ?(.8)+< 3mm,0mm>*{s};
\end{xy}\qquad\mbox{below}\qquad
\begin{xy}
(0,0)*\dir{*};
(0,0);(-4,-4) **\dir{-}; ?(.8)+<-3mm,0mm>*{q};
(0,0);( 4,-4) **\dir{-}; ?(.8)+< 3mm,0mm>*{k};
(0,0);( 0, 6) **\dir{-}; ?(.8)+< 3mm,0mm>*{p};
\end{xy}\qquad\mbox{gives}\qquad
\delta_{qr}\delta_{kj}\,\,
\begin{xy}
(0, 4)*\dir{*};
(0,-4)*\dir{*};
(0,-4);(0, 4) **\crv{(-4,-4)&(-4, 4)}; ?(.5)+<-2mm,0mm>*{q};
(0,-4);(0, 4) **\crv{( 4,-4)&( 4, 4)}; ?(.5)+< 2mm,0mm>*{k};
(0, 4);(0, 8) **\dir{-}; ?(.8)+< 3mm,0mm>*{p};
(0,-4);(0,-8) **\dir{-}; ?(.8)+< 3mm,0mm>*{s};
\end{xy}
\end{equation}
Exploiting semi-simplicity and Schur's lemma, we compute
\begin{equation}
\begin{xy}
(0, 4)*\dir{*};
(0,-4)*\dir{*};
(0,-4);(0, 4) **\crv{(-4,-4)&(-4, 4)}; ?(.5)+<-2mm,0mm>*{q};
(0,-4);(0, 4) **\crv{( 4,-4)&( 4, 4)}; ?(.5)+< 2mm,0mm>*{k};
(0, 4);(0, 8) **\dir{-}; ?(.8)+< 3mm,0mm>*{p};
(0,-4);(0,-8) **\dir{-}; ?(.8)+< 3mm,0mm>*{s};
\end{xy}
=\delta_{ps}\frac{\theta(p,q,k)}{\Delta_p}\,
\begin{xy}
(0,-8);(0,8) **\dir{-}; ?(.8)+<3mm,0mm>*{p}
\end{xy}.
\end{equation}
The quantum $6j$-symbol is defined as
\begin{equation}
\left\{\begin{matrix}a&b&i\\ c&d&j\end{matrix}\right\}_q\,
:=\frac{\Delta_i}{\theta(a,d,i)\theta(b,c,i)}\,
\begin{xy}
( 0, 6)*\dir{*};
(-6, 0)*\dir{*};
( 6, 0)*\dir{*};
( 0,-6)*\dir{*};
(-6, 0);( 6, 0) **\dir{-}; ?(.5)+< 0mm,-2mm>*{j};
(-6, 0);( 0,-6) **\dir{-}; ?(.5)+<-2mm,-2mm>*{a};
(-6, 0);( 0, 6) **\dir{-}; ?(.5)+<-2mm, 2mm>*{b};
( 0, 6);( 6, 0) **\dir{-}; ?(.5)+< 2mm, 2mm>*{c};
( 0,-6);( 6, 0) **\dir{-}; ?(.5)+< 2mm,-2mm>*{d};
(12, 6);(12,-6) **\dir{-}; ?(.5)+< 3mm, 0mm>*{i};
( 0, 6);(12, 6) **\crv{(0,12)&(12,12)};
( 0,-6);(12,-6) **\crv{(0,-12)&(12,-12)};
\end{xy}\,.
\end{equation}
It is used in the recoupling identity,
\begin{equation}
\begin{xy}
(-4, 0)*\dir{*};
( 4, 0)*\dir{*};
(-4, 0);( 4, 0) **\dir{-}; ?(.5)+<0mm, 3mm>*{j};
( 4, 0);( 8, 4) **\dir{-}; ?(.8)+<0mm, 3mm>*{c};
( 4, 0);( 8,-4) **\dir{-}; ?(.8)+<0mm,-3mm>*{d};
(-4, 0);(-8,-4) **\dir{-}; ?(.8)+<0mm,-3mm>*{a};
(-4, 0);(-8, 4) **\dir{-}; ?(.8)+<0mm, 3mm>*{b};
\end{xy} = \sum_i\,
\left\{\begin{matrix}a&b&i\\ c&d&j\end{matrix}\right\}_q\,
\begin{xy}
(0,4)*\dir{*};
(0,-4)*\dir{*};
(0,-4);( 0, 4) **\dir{-}; ?(.5)+< 3mm,0mm>*{i};
(0, 4);(-4, 8) **\dir{-}; ?(.8)+<-3mm,0mm>*{b};
(0, 4);( 4, 8) **\dir{-}; ?(.8)+< 3mm,0mm>*{c};
(0,-4);(-4,-8) **\dir{-}; ?(.8)+<-3mm,0mm>*{a};
(0,-4);( 4,-8) **\dir{-}; ?(.8)+< 3mm,0mm>*{d};
\end{xy}\,.
\end{equation}

\subsubsection{The dimension graph}

We now assume that $r\geq 3$. The category $\sym{C}$ is generated by $M=V_1$
(Definition~\ref{def_generates}). From the decompositions $V_0\otimes M\cong
V_1$, $V_j\otimes M\cong V_{j-1}\oplus V_{j+1}$ for all $1\leq j\leq r-3$ and
$V_{r-2}\otimes M\cong V_{r-3}$, we obtain the dimension graph $\sym{G}$ of
$\sym{C}$ with respect to $M$ (Definition~\ref{def_graph}):
\begin{equation}
\label{eq_graphex1}
\sym{G} =
\begin{xy}
( 0, 0)*\dir{*};
( 0,-5)*{0};
( 1, 1);(14, 1)**\crv{( 5, 2)&(10, 2)} ?>*\dir{>};
( 1,-1);(14,-1)**\crv{( 5,-2)&(10,-2)} ?<*\dir{<};
(15, 0)*\dir{*};
(15,-5)*{1};
(16, 1);(29, 1)**\crv{(20, 2)&(25, 2)} ?>*\dir{>};
(16,-1);(29,-1)**\crv{(20,-2)&(25,-2)} ?<*\dir{<};
(30, 0)*\dir{*};
(30,-5)*{2};
(31, 1);(35, 1.5)**\crv{(33, 1.5)};
(31,-1);(35,-1.5)**\crv{(33,-1.5)} ?<*\dir{<};
(40, 0)*{\ldots};
(49, 1);(45, 1.5)**\crv{(47, 1.5)} ?<*\dir{<};
(49,-1);(45,-1.5)**\crv{(47,-1.5)};
(50, 0)*\dir{*};
(50,-5)*{r-3};
(51, 1);(64, 1)**\crv{(55, 2)&(60, 2)} ?>*\dir{>};
(51,-1);(64,-1)**\crv{(55,-2)&(60,-2)} ?<*\dir{<};
(65, 0)*\dir{*};
(65,-5)*{r-2};
\end{xy}
\end{equation}
Since for any two vertices $j,\ell\in\sym{G}^0=I$, there is at most
one edge from $j$ to $\ell$, we specify a path $p\in\sym{G}^m$ of
length $m\in\N_0$ by the sequence of the $m+1$ vertices along $p$,
\ie\ $p=(i_0,\ldots,i_m)\in I^{m+1}$. The source and target of this
path are $\sigma(p)=i_m$ and $\tau(p)=i_0$.

At this point, the reader should be familiar with the WBA $H[\sym{G}]$
associated with the graph $\sym{G}$ (Proposition~\ref{prop_defhg}). As
an algebra, $H[\sym{G}]\cong k(\sym{G}\times\sym{G})$ is the path
algebra of the quiver $\sym{G}\times\sym{G}$. As a coalgebra, it is a
direct sum of matrix coalgebras: one for each degree, \ie\ for each
length of paths. Our construction shows that the category $\sym{C}$ is
equivalent to the category of finite-dimensional comodules of
$H[\sym{G}]$ modulo the relations~\eqref{eq_relex11}
and~\eqref{eq_relex12} below.

\subsubsection{The fundamental surjection}

We use the same basis of $\omega M=\Hom(\hat V,\hat V\otimes M)$ as
in~\cite{Pf09a,Pf09b},
\ie\ $\{\,e_{i,i+1}^{(M)},e_{i+1,i}^{(M)}\,\mid\,0\leq i\leq r-3\,\}$
which reads in terms of diagrams,
\begin{equation}
e^{(M)}_{i,i+1} =
\begin{xy}
(0,0)*\dir{*};
(0,0);(0,6) **\dir{-}; ?(.7)+<3mm,0mm>*{i};
(0,0);(-4,-4) **\dir{-}; ?(.7)+<-6mm,-1mm>*{i+1};
(0,0);( 4,-4) **\dir{-}; ?(.7)+< 3mm,-1mm>*{1};
\end{xy}\qquad\mbox{and}\qquad
e^{(M)}_{i+1,i} =
\begin{xy}
(0,0)*\dir{*};
(0,0);(0,6) **\dir{-}; ?(.7)+<6mm,0mm>*{i+1};
(0,0);(-4,-4) **\dir{-}; ?(.7)+<-3mm,-1mm>*{i};
(0,0);( 4,-4) **\dir{-}; ?(.7)+< 3mm,-1mm>*{1};
\end{xy}.
\end{equation}
The surjection $\pi\colon H[\sym{G}]\to H=\coend(\sym{C},\omega)$
(Theorem~\ref{thm_surjection}) then maps
\begin{eqnarray}
\pi({[(j)|(\ell)]}_0) &=& {[e^j_{(\one)}|e^{(\one)}_\ell]}_\one,\\
\pi({[(j_0,j_1)|(\ell_0,\ell_1)]}_1)
&=& {[e^{j_0,j_1}_{(M)}|e^{(M)}_{\ell_1,\ell_0}]}_M,
\end{eqnarray}
for $j,j_0,j_1,\ell,\ell_0,\ell_1\in I$ and $j_1=j_0\pm 1$, $\ell_1=\ell_0\pm
1$. We refer to the explanations preceding Theorem~\ref{thm_surjection} for
the bases used on the right hand side.

\subsubsection{The endomorphism adapted WBA}

The category $\sym{C}$ satisfies the strong Schur--Weyl property
(Definition~\ref{def_schurweyl}), \ie\ the monoidal unit $\one=V_0$
and the generating object $M=V_1$ are both simple, and the
endomorphism algebras $\End(M^{\otimes m})$, $m\geq 2$, are generated
by braiding and inverse braiding of adjacent tensor factors.

The ideal $I_\sym{E}$ in the definition of the endomorphism adapted WBA
$H[\sym{G},\sym{E}]=H[\sym{G}]/I_\sym{E}$
(Definitions~\ref{def_schurweyladapted} and~\ref{def_weakfrt}) is therefore
generated by homogeneous relations of degree two:
\begin{eqnarray}
\label{eq_relex11}
& & \sum_{(i_0,i_1,i_2)\in\sym{G}^2}
{[(j_0,j_1,j_2)|(i_0,i_1,i_2)]}_2\,R_{(i_0,i_1,i_2);(\ell_0,\ell_1,\ell_2)}\nn\\
&-& \sum_{(i_0,i_1,i_2)\in\sym{G}^2}
R_{(j_0,j_1,j_2);(i_0,i_1,i_2)}\,{[(i_0,i_1,i_2)|(\ell_0,\ell_1,\ell_2)]}_2,
\end{eqnarray}
for all paths of length two $(j_0,j_1,j_2)\in\sym{G}^2$ and
$(\ell_0,\ell_1,\ell_2)\in\sym{G}^2$. Note that these relations are
non-trivial only if $j_0=\ell_0$ and $j_2=\ell_2$
(Remark~\ref{rem_coefficients}(2)).

A direct computation using Temperley--Lieb recoupling calculus yields
the following non-zero coefficients:
\begin{eqnarray}
R_{(j,j\pm1,j);(j,j\pm1,j)}
&=& \mp q^{-1/2}\,\frac{q^{\pm(j+1)}}{\q{j+1}},\\
R_{(j,j-1,j);(j,j+1,j)}
&=& q^{-1/2}\frac{\q{j}\q{j+2}}{{\q{j+1}}^2},\\
R_{(j,j+1,j);(j,j-1,j)}
&=& q^{-1/2},\\
R_{(j,j\pm1,j\pm2);(j,j\pm1,j\pm2)} &=& q^{-3/2}.
\end{eqnarray}

\subsubsection{The relevant group-like elements}

In order to compute the kernel $I_G$ of the induced surjection
$\bar\pi\colon H[\sym{G},\sym{E}]\to H$, we systematically consider
the simple comodules of $H$ and $H[\sym{G},\sym{E}]$, proceeding by
increasing degree according to the tensor power $m$ of $M^{\otimes
m}$, $m\in\N_0$.

In the following table, we show the decomposition of ${(\omega
M)}^{\hotimes m}$ as an $H$-comodule which is known from $\sym{C}$,
and the decomposition of $k\sym{G}^m$ as an
$H[\sym{G},\sym{E}]$-comodule which follows from
Proposition~\ref{prop_schurweyl}(4). Assume for now that $r$ is big.

\begin{center}
\begin{tabular}{r|l|l}
$m$ & ${(\omega M)}^{\hotimes m}\in|\sym{M}^H|$ & $k\sym{G}^m\in|\sym{M}^{H[\sym{G},\sym{E}]}|$\\
\hline
$0$ & $V_0$ & $V_0$\\
$1$ & $V_1$ & $V_1$\\
$2$ & $V_2\oplus V_0$ & $V_2\oplus V_0^\prime$\\
$3$ & $V_3\oplus 2V_1$ & $V_3\oplus 2V_1^\prime$\\
$4$ & $V_4\oplus 3V_2\oplus 2V_0$ & $V_4\oplus 3V_2^\prime\oplus 2V_0^{\pprime}$\\
$\ldots$ & $\ldots$ & $\ldots$
\end{tabular}
\end{center}

For each $j\in $I, the objects $V_j$, $V_j^\prime$, $V_j^\pprime$,
$\ldots$ are simple $H[\sym{G},\sym{E}]$-comodules that are pairwise
non-isomorphic as $H[\sym{G},\sym{E}]$-comodules but that are all
pushed forward to the $H$-comodule $V_j$ under $\bar\pi$. Note that we
have suppressed the long forgetful functor and written $V_j$ for
$\omega V_j$.

We see that $m=2$ is the smallest degree in which there is an
$H[\sym{G},\sym{E}]$-comodule, $V_0^\prime$, which is not isomorphic
to $V_0$, but pushed forward to it under $\bar\pi$. Therefore, by
Theorem~\ref{thm_comodulepush}, $V_0^\prime$ is characterized by a
group-like element $g_2\in{H[\sym{G},\sym{E}]}_2$ for which
$\bar\pi(g_2)=1$. Upon dividing $H[\sym{G},\sym{E}]$ by the relation
$g_2-1$, $V_0^\prime$ and $V_0$ will become isomorphic.

The next higher degree with an $H[\sym{G},\sym{E}]$-comodule
non-isomorphic, but pushed forward to $V_0$ is $m=4$. Since the
group-like $g_2^2$ is of degree $m=4$ and satisfies $\bar\pi(g_2)=1$,
by Proposition~\ref{prop_gplikeunique}, $g_2^2$ is the group-like that
characterizes $V_0^\pprime$. Notice that the quotient by $g_2-1$ will
also render $V_1^\prime$ isomorphic to $V_1$, $V_0^\pprime$ isomorphic
to $V_0$ and $V_2^\prime$ isomorphic to $V_2$.

If $r$ is not large enough, the above argument is unchanged except
that some of the `biggest' comodules are absent from the
decompositions. The pattern continues in higher degrees, and the only
relevant group-like is $g_2$.

In order to compute $g_2$, we explicitly decompose $\omega
M\hotimes\omega M\cong V_2\oplus V_0^\prime$ and compute $g_2$ as the
group-like that characterizes $V_0^\prime$ from
Theorem~\ref{thm_comodulepush}. For the decomposition, we
calculate the idempotent $Q=\omega(\id_{M\otimes M}-P_2)$ associated
with $V_0^\prime$. Here, $P_2$ is the Jones--Wenzl idempotent of
$V_2\subseteq M\otimes M$. $Q$ takes non-zero values in the following
cases:
\begin{eqnarray}
Q((0,1,0)) &=& (0,1,0),\\
Q((j,j+1,j))
&=& \frac{\q{j+2}}{\q{2}\q{j+1}}\,(j,j+1,j)
-\frac{\q{j}\q{j+2}}{\q{2}{\q{j+1}}^2}\,(j,j-1,j),\\
Q((j,j-1,j))
&=& -\frac{1}{\q{2}}\,(j,j+1,j)
+\frac{\q{j}}{\q{2}\q{j+1}}\,(j,j-1,j),\\
Q((r-2,r-3,r-2)) &=& (r-2,r-3,r-2),
\end{eqnarray}
for $1\leq j\leq r-3$. A basis for $V_0^\prime\subseteq\omega
M\hotimes\omega M$ is therefore given by
\begin{eqnarray}
b_0 &=& (0,1,0),\\
b_j &=& \bigl(\q{j+1}\,(j,j+1,j)-\q{j}\,(j,j-1,j)\bigr)/\sqrt{2},\\
b_{r-2} &=& -(r-2,r-3,r-2).
\end{eqnarray}
We use the coefficients of the comodule $V_0^\prime$ in this basis and
compute the group-like to be
\begin{equation}
\begin{split}
\label{eq_qdet}
g_2=\sum_{j,\ell=0}^{r-2}\alpha_j\alpha_\ell\,\Biggl(
&\frac{\q{\ell+1}}{\q{j+1}}\,{[(j,j+1,j)|(\ell,\ell+1,\ell)]}_2
+\frac{\q{\ell}}{\q{j}}\,{[(j,j-1,j)|(\ell,\ell-1,\ell)]}_2\\
-&\frac{\q{\ell+1}}{\q{j}}\,{[(j,j-1,j)|(\ell,\ell+1,\ell)]}_2
-\frac{\q{\ell}}{\q{j+1}}\,{[(j,j+1,j)|(\ell,\ell-1,\ell)]}_2
\Biggr),
\end{split}
\end{equation}
with $\alpha_0=\alpha_{r-2}=1$ and $\alpha_j=1/\sqrt{2}$ for all
$1\leq j\leq r-3$. In~\eqref{eq_qdet} it is understood that terms
with a path $(j,j\pm1,j)$ are omitted from the expression whenever
$j\pm1<0$ or $j\pm1>r-2$.

The kernel of $\bar\pi\colon H[\sym{G},\sym{E}]\to H$ is therefore
(Theorem~\ref{thm_main}) generated by
\begin{equation}
\label{eq_relex12}
1-g_2.
\end{equation}
We have shown that the category $\sym{C}$ is equivalent to the category of
finite-dimensional comodules of the quotient of $H[\sym{G}]$ for the graph
$\sym{G}$ of~\eqref{eq_graphex1} modulo the relations~\eqref{eq_relex11}
and~\eqref{eq_relex12}.

\appendix
\section{Summary of notation and conventions}
\label{app_monoidal}

In this appendix, we collect the relevant definitions and properties of
monoidal, autonomous, braided monoidal and abelian categories,
following Schauenburg~\cite{Sc92} and MacLane~\cite{Ma73}.

\subsection{Monoidal categories}

\begin{definition}
A \emph{monoidal category}
$(\sym{C},\otimes,\one,\alpha,\lambda,\rho)$ is a category $\sym{C}$
with a bifunctor $\otimes\colon\sym{C}\times\sym{C}\to\sym{C}$
(\emph{tensor product}), an object $\one\in|\sym{C}|$ (\emph{monoidal
unit}) and natural isomorphisms $\alpha_{X,Y,Z}\colon(X\otimes
Y)\otimes Z\to X\otimes(Y\otimes Z)$ (\emph{associator}),
$\lambda_X\colon \one\otimes X\to X$ (\emph{left-unit constraint}) and
$\rho_X\colon X\otimes\one\to X$ (\emph{right-unit constraint}) for
all $X,Y,Z\in|\sym{C}|$, subject to the pentagon axiom
\begin{equation}
\alpha_{X,Y,Z\otimes W}\circ\alpha_{X\otimes Y,Z,W}
= (\id_X\otimes\alpha_{Y,Z,W})\circ\alpha_{X,Y\otimes Z,W}\circ(\alpha_{X,Y,Z}\otimes\id_W)
\end{equation}
and the triangle axiom
\begin{equation}
\rho_X\otimes\id_Y=(\id_X\otimes\lambda_Y)\circ\alpha_{X,\one,Y}
\end{equation}
for all $X,Y,Z,W\in|\sym{C}|$.
\end{definition}

\begin{definition}
\label{def_lax}
Let $(\sym{C},\otimes,\one,\alpha,\lambda,\rho)$ and
$(\sym{C}^\prime,\otimes^\prime,\one^\prime,\alpha^\prime,\lambda^\prime,\rho^\prime)$
be monoidal categories.
\begin{myenumerate}
\item
A \emph{lax monoidal functor}
$(F,F_{X,Y},F_0)\colon\sym{C}\to\sym{C}^\prime$ consists of a
functor $F\colon\sym{C}\to\sym{C}^\prime$, morphisms $F_{X,Y}\colon
FX\otimes^\prime FY\to F(X\otimes Y)$ that are natural in
$X,Y\in|\sym{C}|$, and of a morphism $F_0\colon\one^\prime\to
F\one$, subject to the hexagon axiom
\begin{equation}
F_{X,Y\otimes Z}\circ(\id_{FX}\otimes^\prime F_{Y,Z})\circ\alpha^\prime_{FX,FY,FZ}
= F\alpha_{X,Y,Z}\circ F_{X\otimes Y,Z}\circ(F_{X,Y}\otimes^\prime\id_{FZ})
\end{equation}
and the two squares
\begin{eqnarray}
\lambda^\prime_{FX} &=& F\lambda_X\circ F_{\one,X}\circ(F_0\otimes^\prime\id_{FX}),\\
\rho^\prime_{FX}    &=& F\rho_X\circ F_{X,\one}\circ(\id_{FX}\otimes^\prime F_0)
\end{eqnarray}
for all $X,Y,Z\in|\sym{C}|$.
\item
An \emph{oplax monoidal functor}
$(F,F^{X,Y},F^0)\colon\sym{C}\to\sym{C}^\prime$ consists of a
functor $F\colon\sym{C}\to\sym{C}^\prime$, morphisms $F^{X,Y}\colon
F(X\otimes Y)\to FX\otimes^\prime FY$ that are natural in
$X,Y\in|\sym{C}|$, and of a morphism $F^0\colon
F\one\to\one^\prime$, subject to the hexagon axiom
\begin{equation}
\label{eq_oplaxhexagon}
(\id_{FX}\otimes^\prime F^{Y,Z})\circ F^{X,Y\otimes Z}\circ F\alpha_{X,Y,Z}
= \alpha^\prime_{FX,FY,FZ}\circ(F^{X,Y}\otimes^\prime\id_{FZ})\circ F^{X\otimes Y,Z}
\end{equation}
and the two squares
\begin{eqnarray}
F\lambda_X &=& \lambda^\prime_{FX}\circ(F^0\otimes^\prime\id_{FX})\circ F^{\one,X},\\
F\rho_X    &=& \rho^\prime_{FX}\circ(\id_{FX}\otimes^\prime F^0)\circ F^{X,\one}
\end{eqnarray}
for all $X,Y,Z\in|\sym{C}|$.
\item
A \emph{strong monoidal functor}
$(F,F_{X,Y},F_0)\colon\sym{C}\to\sym{C}^\prime$ is a lax monoidal
functor such that all $F_{X,Y}$, $X,Y\in|\sym{C}|$ and $F_0$ are
isomorphisms.
\end{myenumerate}
\end{definition}

\begin{definition}
Let $(\sym{C},\otimes,\one,\alpha,\lambda,\rho)$ be a monoidal
category. A \emph{left-dual} $(X^\ast,\ev_X,\coev_X)$ of an object
$X\in|\sym{C}|$ consists of an object $X^\ast\in|\sym{C}|$ and morphisms
$\ev_X\colon X^\ast\otimes X\to\one$ (\emph{left evaluation}) and
$\coev_X\colon\one\to X\otimes X^\ast$ (\emph{left coevaluation}) that satisfy
the triangle identities
\begin{eqnarray}
\label{eq_zigzag1}
\rho_X\circ(\id_X\otimes\ev_X)\circ\alpha_{X,X^\ast,X}
\circ(\coev_X\otimes\id_X)\circ\lambda_X^{-1} &=& \id_X,\\
\label{eq_zigzag2}
\lambda_{X^\ast}\circ(\ev_X\otimes\id_{X^\ast})\circ\alpha^{-1}_{X^\ast,X,X^\ast}
\circ(\id_{X^\ast}\otimes\coev_X)\circ\rho_{X^\ast}^{-1} &=& \id_{X^\ast}.
\end{eqnarray}
If $\sym{C}$ is a monoidal category and $f\colon X\to Y$ a morphism of
$\sym{C}$ such that both $X$ and $Y$ have left-duals, the \emph{left-dual} of
$f$ is defined as
\begin{equation}
f^\ast := \lambda_{X^\ast}\circ(\ev_Y\otimes\id_{X^\ast})\circ\alpha^{-1}_{Y^\ast,Y,X^\ast}
\circ(\id_{Y^\ast}\otimes(f\otimes\id_{X^\ast}))\circ(\id_{Y^\ast}\otimes\coev_X)\circ\rho^{-1}_{Y^\ast}.
\end{equation}
A \emph{left-autonomous category} is a monoidal category in
which each object is equipped with a specified left-dual.
\end{definition}

\begin{definition}
A \emph{braided monoidal category}
$(\sym{C},\otimes,\one,\alpha,\lambda,\rho,\sigma)$ is a monoidal
category $(\sym{C},\otimes,\one,\alpha,\lambda,\rho)$ with natural
isomorphisms $\sigma_{X,Y}\colon X\otimes Y\to Y\otimes X$ for all
$X,Y\in|\sym{C}|$ that satisfy the two hexagon axioms
\begin{eqnarray}
\sigma_{X\otimes Y,Z} &=& \alpha_{Z,X,Y}\circ(\sigma_{X,Z}\otimes\id_Y)
\circ\alpha^{-1}_{X,Z,Y}\circ(\id_X\otimes\sigma_{Y,Z})\circ\alpha_{X,Y,Z},\\
\sigma_{X,Y\otimes Z} &=& \alpha^{-1}_{Y,Z,X}\circ(\id_Y\otimes\sigma_{X,Z})
\circ\alpha_{Y,X,Z}\circ(\sigma_{X,Y}\otimes\id_Z)\circ\alpha^{-1}_{X,Y,Z}
\end{eqnarray}
for all $X,Y,Z\in|\sym{C}|$. The category is called \emph{symmetric
monoidal} if in addition
\begin{equation}
\sigma_{Y,X}\circ\sigma_{X,Y} = \id_{X\otimes Y}
\end{equation}
for all $X,Y\in|\sym{C}|$.
\end{definition}

\begin{definition}
Let $(\sym{C},\otimes,\one,\alpha,\lambda,\rho,\sigma)$ and
$(\sym{C}^\prime,\otimes^\prime,\one^\prime,\alpha^\prime,\lambda^\prime,\rho^\prime,\sigma^\prime)$
be braided monoidal categories. A lax monoidal functor
$(F,F_{X,Y},F_0)\colon\sym{C}\to\sym{C}^\prime$ is called
\emph{braided} if
\begin{equation}
\label{eq_braidedfunctor}
F\sigma_{X,Y}\circ F_{X,Y} = F_{Y,X}\circ\sigma^\prime_{FX,FY}
\end{equation}
for all $X,Y\in|\sym{C}|$.
\end{definition}

\subsection{Abelian and semisimple categories}
\label{app_abelian}

\begin{definition}
A category $\sym{C}$ is called \emph{$\mathbf{Ab}$-enriched} if it is enriched
in the category $\mathbf{Ab}$ of abelian groups, \ie\ if $\Hom(X,Y)$
is an abelian group for all objects $X,Y\in|\sym{C}|$ and if the
composition of morphisms is $\Z$-bilinear.

Let $k$ be a commutative ring. A category $\sym{C}$ is called
\emph{$k$-linear} if it is enriched in ${}_k\sym{M}$, the category of
$k$-modules, \ie\ if $\Hom(X,Y)$ is a $k$-module for all
$X,Y\in|\sym{C}|$ and if the composition of morphisms is $k$-bilinear.

A functor $F\colon\sym{C}\to\sym{C}^\prime$ between [$\mathbf{Ab}$-enriched,
$k$-linear] categories is called [\emph{additive}, $k$-\emph{linear}]
if it induces homomorphisms of [additive groups, $k$-modules]
\begin{equation}
\Hom(X,Y)\to\Hom(FX,FY)
\end{equation}
for all $X,Y\in|\sym{C}|$.
\end{definition}

\begin{definition}
\label{def_preadditivecat}
A monoidal category $(\sym{C},\otimes,\one,\alpha,\lambda,\rho)$ is
called [\emph{$\mathbf{Ab}$-enriched}, $k$-\emph{linear}] if $\sym{C}$ is
[$\mathbf{Ab}$-enriched, $k$-linear] and if the tensor product of morphisms is
[$\Z$-bilinear, $k$-bilinear].
\end{definition}

\begin{definition}
An \emph{additive category} is an $\mathbf{Ab}$-enriched category that
has a terminal object and all binary products. A \emph{preabelian
category} is an $\mathbf{Ab}$-enriched category that has all finite
limits. An \emph{abelian category} is a preabelian category in which
every monomorphism is a kernel and in which every epimorphism is a
cokernel.
\end{definition}

\begin{definition}
\label{def_semisimple}
Let $\sym{C}$ be a $k$-linear category, $k$ a field.
\begin{myenumerate}
\item
An object $X\in|\sym{C}|$ is called \emph{simple} if $\End(X)\cong
k$ are isomorphic as $k$-modules.
\item
The category $\sym{C}$ is called \emph{split semisimple} if there exists a
family ${\{V_j\}}_{j\in I}$ of objects $V_j\in|\sym{C}|$, $I$ some
index set, such that
\begin{myenumerate}
\item
$V_j$ is simple for all $j\in I$.
\item
$\Hom(V_j,V_\ell)=\{0\}$ for all $j,\ell\in I$ for which $j\neq\ell$.
\item
For each object $X\in|\sym{C}|$, there is a finite sequence
$j_1^{(X)},\ldots,j_{n^X}^{(X)}\in I$, $n^X\in\N_0$, and morphisms
$\imath_\ell^{(X)}\colon V_{j_\ell}\to X$ and $\pi_\ell^{(X)}\colon
X\to V_{j_\ell}$ such that
\begin{equation}
\id_X = \sum_{\ell=1}^{n^X}\imath^X_\ell\circ\pi^X_\ell.
\end{equation}
and
\begin{equation}
\label{eq_semisimpledualbases}
\pi^X_\ell\circ{\imath}^X_m = \left\{
\begin{matrix}
\id_{V_{j^X_\ell}},&\mbox{if}\quad \ell=m,\\
0,                 &\mbox{else}
\end{matrix}
\right.
\end{equation}
\end{myenumerate}
\item
The category is called \emph{finitely split semisimple} if it is split
semisimple with a finite index set $I$ in condition~(3).
\end{myenumerate}
\end{definition}

When we speak of split semisimple monoidal categories, we do not require the
monoidal unit $\one$ to be simple. Note that in a split semisimple autonomous
category, if an object $X$ is simple, then so is its dual.

\subsubsection*{Acknowledgements}

The author is grateful to Gabriella B{\"o}hm, Catharina Stroppel,
Korn{\'e}l Szlach{\'a}nyi and Peter Vecserny{\'e}s for stimulating
discussions, to Vladimir Turaev for correspondence, and to the
anonymous referee for the hint that Theorem~\ref{thm_comodulepush}
holds for all homomorphisms between WBAs, a result which substantially
simplified Section~5.

\newenvironment{hpabstract}{%
\renewcommand{\baselinestretch}{0.2}
\begin{footnotesize}%
}{\end{footnotesize}}%
\newcommand{\hpeprint}[2]{%
\href{http://www.arxiv.org/abs/#1}{\texttt{arxiv:#1#2}}}%
\newcommand{\hpspires}[1]{%
\href{http://www.slac.stanford.edu/spires/find/hep/www?#1}{SPIRES Link}}%
\newcommand{\hpmathsci}[1]{%
\href{http://www.ams.org/mathscinet-getitem?mr=#1}{\texttt{MR #1}}}%
\newcommand{\hpdoi}[1]{%
\href{http://dx.doi.org/#1}{\texttt{DOI #1}}}%
\newcommand{\hpjournal}[2]{%
\href{http://dx.doi.org/#2}{\textsl{#1\/}}}%

\end{document}